\renewcommand{\theta}{\uptheta}
\renewcommand{\iota}{\upiota}
\renewcommand{\alpha}{\upalpha}
\renewcommand{\beta}{\upbeta}
\renewcommand{\gamma}{\upgamma}
\renewcommand{\delta}{\updelta}
\renewcommand{\zeta}{\upzeta}
\renewcommand{\pi}{\uppi\hspace{0.05em}}
\renewcommand{\xi}{\upxi}
\renewcommand{\chi}{\upchi}
\renewcommand{\sigma}{\upsigma}
\renewcommand{\Lambda}{\Uplambda}
\renewcommand{\Gamma}{\Upgamma}
\renewcommand{\phi}{\upphi}
\renewcommand{\nu}{\upnu}
\renewcommand{\tau}{\uptau}
\renewcommand{\mu}{\upmu}
\renewcommand{\eta}{\upeta}
\newtheorem{theorem}{Theorem}[section]
\newtheorem{thmx}{Theorem}
\newtheorem{proposition}[theorem]{Proposition}
\newtheorem{lemma}[theorem]{Lemma}
\newtheorem{corollary}[theorem]{Corollary}
\theoremstyle{definition}
\newtheorem{definition}[theorem]{Definition}
\theoremstyle{remark}
\newtheorem{example}[theorem]{Example}
\newtheorem{remark}[theorem]{Remark}
\renewcommand{\AA}{\mathbb{A}}
\newcommand{\CC}{\mathbb{C}}
\DeclareMathOperator{\B}{B\!}
\DeclareMathOperator{\TW}{\mathtt{tw}}
\newcommand{\kac}{\mathtt{a}}
\newcommand{\dd}{\mathbf{d}}
\newcommand{\ee}{\mathbf{e}}
\newcommand{\JH}{\mathtt{JH}}
\newcommand{\FX}{\mathfrak{X}}
\newcommand{\ff}{\mathbf{f}}
\DeclareMathOperator{\Heis}{\mathbf{Heis}}
\DeclareMathOperator{\Walg}{W}
\DeclareMathOperator{\Para}{P}
\newcommand{\Mst}{\mathfrak{M}}
\newcommand{\Msp}{\mathcal{M}}
\newcommand{\DTS}{\mathcal{BPS}}
\DeclareMathOperator{\sst}{-ss}
\DeclareMathOperator{\Hilb}{Hilb}
\DeclareMathOperator{\Hom}{Hom}
\DeclareMathOperator{\Gr}{\mathbf{Gr}}
\DeclareMathOperator{\lmod}{-mod}
\DeclareMathOperator{\BM}{BM}
\newcommand{\fg}{\mathfrak{g}}
\newcommand{\fn}{\mathfrak{n}}
\newcommand{\fm}{\mathfrak{m}}
\DeclareMathOperator{\supp}{supp}
\DeclareMathOperator{\Perv}{\mathbf{Perv}}
\DeclareMathOperator{\Sym}{\mathbf{Sym}}
\DeclareMathOperator{\Spec}{Spec}
\DeclareMathOperator{\Gl}{GL}
\DeclareMathOperator{\UEA}{\mathbf{U}}
\DeclareMathOperator{\id}{id}
\DeclareMathOperator{\Jac}{Jac}
\DeclareMathOperator{\Tr}{Tr}
\DeclareMathOperator{\pt}{pt}
\DeclareMathOperator{\Tot}{Tot}
\DeclareMathOperator{\vir}{vir}
\DeclareMathOperator{\Ho}{\mathcal{H}}
\DeclareMathOperator{\HO}{\mathbf{H}}
\DeclareMathOperator{\Coha}{\mathcal{A}}
\DeclareMathOperator{\TS}{\mathtt{TS}}
\DeclareMathOperator{\BoMo}{BM}
\DeclareMathOperator{\Rees}{\mathbf{R}}
\newcommand{\cdotsh}{\!\cdot\!}
\newcommand{\Dub}{\mathcal{D}}
\newcommand{\Dbc}{\mathcal{D}^b_c}
\newcommand{\cc}{\mathbf{c}}
\newcommand{\BN}{\mathbb{N}}
\newcommand{\BD}{\mathbb{D}}
\newcommand{\BQ}{\mathbb{Q}}
\newcommand{\BC}{\mathbb{C}}
\newcommand{\BZ}{\mathbb{Z}}
\newcommand{\BA}{\mathbb{A}}
\newcommand{\BF}{\mathbb{F}}
\newcommand{\CF}{\mathcal{F}}
\newcommand{\CH}{\mathcal{H}}
\newcommand{\CA}{\mathcal{A}}
\newcommand{\CG}{\mathcal{G}}
\newcommand{\CL}{\mathcal{L}}
\newcommand{\CE}{\mathcal{E}}
\newcommand{\CS}{\mathcal{S}}
\newcommand{\CY}{\mathcal{Y}}
\newcommand{\CN}{\mathcal{N}}
\newcommand{\FP}{\mathfrak{P}}
\newcommand{\FY}{\mathfrak{Y}}
\newcommand{\FZ}{\mathfrak{Z}}
\newcommand{\fp}{\mathfrak{p}}
\newcommand{\phip}[1]{{}^{\mathfrak{p}}\!\phi_{#1}}
\DeclareMathOperator{\Det}{Det}
\DeclareMathOperator{\CoHA}{\mathcal{A}}
\title[Affine BPS algebras, W algebras, and $\CoHA_{\BA^2}^T$]{Affine BPS algebras, W algebras, and the cohomological Hall algebra of $\mathbb{A}^2$}
\author{Ben Davison}
\address{School of Mathematics and Maxwell Institute for Mathematical Sciences\\
University of Edinburgh\\
Edinburgh EH9 3FD\\
UK}
\email{Ben.Davison{\char'100}ed.ac.uk }
\thanks{I was supported by the European Research Council starting grant “Categorified Donaldson--Thomas theory” No. 759967 and a Royal Society University Research Fellowship during the writing of this paper.}
\begin{document}

\maketitle
\vspace{-0.2in}
\begin{center} 
\end{center}
\begin{abstract}
We introduce affinizations and deformations of the BPS Lie algebra associated to a tripled quiver with its canonical cubic potential, and use them to precisely determine the $T$-equivariant cohomological Hall algebra $\CoHA_{\mathbb{A}^2}^T$ of compactly supported coherent sheaves on $\mathbb{A}_{\mathbb{C}}^2$, acted on by a torus $T$.  In particular we show that this algebra is spherically generated for all $T$. 
\end{abstract}

\setcounter{tocdepth}{1}
\tableofcontents

\section{Introduction}
\subsection{Hall algebras}
\label{HAsec}
Let $\Mst_d(\BC[x,y])$ denote the moduli stack of coherent sheaves of length $d$ on $\BA^2\coloneqq\BA^2_{\BC}$.  We also consider the quotient $\Mst^T_d(\BC[x,y])$, by the induced action of various tori $T$ acting linearly on the two coordinates of $\BA^2$.  We consider the Borel--Moore homology of the stack
\[
\CoHA_{\BA^2}^T\coloneqq \bigoplus_{d\in\BZ_{\geq 0}}\HO^{\BM}(\Mst^T_d(\BC[x,y]),\BQ),
\]
which carries the cohomological Hall algebra (CoHA) product defined in \cite{ScVa13}.  This algebra plays a key intermediate role in the geometric representation theory of $\Hilb_d(\BA^2)$ \cite{Nak97,Groj96}, $W$-algebras \cite{ScVa13,Mi07,Arb12,Neg16}, Yangians, and Hall algebras \cite{RSYZ20,RSYZ20b,LY20,GaYa20}.  Setting $T=\{1\}$, the resulting algebra is important in the study of the cohomological Hall algebra of coherent sheaves with zero-dimensional support for an arbitrary smooth surface \cite{KV19}, as well as partially categorifying the degree zero motivic Donaldson--Thomas theory of $\BA^3$, as studied in \cite{BBS}.  In particular, via dimensional reduction \cite{Da13} this algebra is isomorphic to a particular case of the critical cohomological Hall algebra introduced by Kontsevich and Soibelman \cite{KS2}.  
\smallbreak
In this paper we fully describe the algebra $\CoHA_{\BA^2}^T$, for various $T$, via the introduction and study of deformed and affinized BPS Lie algebras $\hat{\fg}_{\tilde{Q},\tilde{W}}^T$ of tripled quivers with their canonical cubic potentials, extending the original BPS Lie algebra $\fg_{\tilde{Q},\tilde{W}}$ of \cite{QEAs} for this class of quivers with potential.
\subsection{Donaldson--Thomas theory}
\label{DT_ssc}
Donaldson--Thomas theory \cite{Thomas1}, is a theory that assigns to a projective Calabi--Yau threefold $X$, a Chern class $\alpha\in \HO(X,\BZ)$, and a stability condition $\zeta$, a number $\omega^{\zeta}_{\alpha}$ ``counting'' $\zeta$-semistable sheaves on $X$ of Chern class $\alpha$.  The inverted commas are due to the fact that the moduli space $\Msp_{\alpha}^{\zeta}(X)$ of such sheaves will generally have strictly positive dimension, i.e. there are infinitely many such sheaves, and so $\omega_{\alpha}^{\zeta}$ is defined by first constructing  a virtual fundamental class of the right expected dimension (zero) and then taking its degree.  
\smallbreak
Subsequently it was realised by Kai Behrend \cite{Behrend} that every quasi-projective complex scheme $Y$ carries a canonical $\BZ$-valued constructible function $\nu_Y$, and the above virtual count can be computed as the Euler characteristic of $Y=\Msp_{\alpha}^{\zeta}(X)$, weighted by $\nu_Y$.  Furthermore, in the event that a scheme $Y$ can be written\footnote{For arbitrary moduli spaces of stable coherent sheaves on Calabi--Yau threefolds, this global critical locus description is too much to ask for, but can still be achieved locally: see \cite{BBD19,BBBD}.} as the critical locus of a function $f$ on a smooth $d$-dimensional ambient variety $U$, its $\nu_Y$-weighted Euler characteristic is equal to the Euler characteristic of the vanishing cycle cohomology $\HO(U,\phip{f}\BQ^{\vir})$, where we define $\BQ^{\vir}=\BQ[d]$, the cohomological shift of the locally constant sheaf on $U$.  Firstly, these facts give us a way to define DT invariants for noncompact moduli spaces, and secondly, they suggest a natural (partial) ``categorification'' of the DT invariant: instead of considering the \textit{number} $\chi(\HO(U,\phip{f}\BQ^{\vir}))=(-1)^d\sum_{i\in\BZ}(-1)^i\dim(\HO^i(U,\phip{f}\BQ))$, we study the cohomology $\HO(U,\phip{f}\BQ^{\vir})$ itself.  Being a vector space, and not a mere number, this cohomology is expected to have (and indeed does have, in cases of interest in this paper) a rich algebraic structure.
\subsection{Noncommutative Donaldson--Thomas theory}
Let $Q$ be a quiver, by which we mean a directed graph, which we always assume to be finite, and which we allow to have multiple edges and loops.  Let $W\in \BC Q/[\BC Q,\BC Q]_{\mathrm{vect}}$ be a \textit{potential}: an element of the quotient of the free path algebra of $Q$ by the vector space spanned by commutators.  Associated to the quiver with potential $(Q,W)$, we may define the Jacobi algebra $\Jac(Q,W)$ as in \S \ref{Quiver_sec} below.  This is a kind of noncommutative threefold, in the sense that the category of finite-dimensional modules over $\Jac(Q,W)$ behaves in many ways like the category of compactly supported coherent sheaves on a 3CY variety\footnote{By which we mean a smooth three-dimensional variety $X$ satisfying $\mathscr{O}_X\cong\omega_X$.}.  We refer to \cite{ginz,MR3338683} for background and details on this point of view.  The moduli stack of finite-dimensional $\Jac(Q,W)$-modules is realised as the critical locus of the function $\Tr(W)$ on the smooth stack of $\BC Q$-modules.  From the discussion in \S \ref{DT_ssc}, it is thus natural to study the vanishing cycle cohomology
\[
\CoHA_{Q,W}\coloneqq \bigoplus_{\dd\in\BN^{Q_0}}\HO(\Mst_{\dd}(Q),\phip{\Tr(W)}\BQ^{\vir}).
\]
This is a categorification of the noncommutative DT theory associated to the pair $(Q,W)$, as initiated in \cite{Sz08} in the case of the noncommutative conifold.  We continue to use the $\vir$ superscript to denote a shift in cohomological degree (see \S \ref{toolkit}).  The cohomology $\CoHA_{Q,W}$ carries an action of $\HO_{\BC^*_u}\coloneqq \HO(\B \BC^*,\BQ)\cong \BQ[u]$ by multiplying by tautological classes, where $u\bullet -$ is a derivation: see \eqref{udef}.  Furthermore, the object $\CoHA_{Q,W}$ carries the cohomological Hall algebra structure introduced by Kontsevich and Soibelman \cite{KS2}.  
\smallbreak
The connection with the algebra $\CoHA_{\BA^2}^T$ arises by considering the quiver $Q^{(3)}$ with one vertex and three loops $a,a^*,\omega$, and considering the potential 
\[
\tilde{W}=\omega[a,a^*].  
\]
Then there is a natural isomorphism of Hall algebras (see \S \ref{dimred_sec})
\[
\CoHA_{Q^{(3)},\tilde{W}}\cong \CoHA_{\BA^2}.
\]
If $T$ scales the $x,y$ directions of $\BA^2$ with weights $w_1,w_2$ respectively, we may consider the $T$-action on $Q^{(3)}$ that scales the arrows $a,a^*,\omega$ with weights $w_1,w_2,-w_1-w_2$ respectively, and there is a similar isomorphism of \textit{deformed} CoHAs $\CoHA_{Q^{(3)},\tilde{W}}^T\cong \CoHA_{\BA^2}^T$.  
\smallbreak
Via a purity result of \cite{preproj}, the graded dimensions of $\CoHA_{Q^{(3)},\tilde{W}}$ encode the refined DT invariants of the Jacobi algebra $\Jac(Q^{(3)},\tilde{W})\cong\BC[x,y,z]$ (equivalently, the DT theory of compactly supported coherent sheaves on $\BA^3$) as defined in \cite{KS1,KS2}, and calculated in \cite{BBS}.  This means that the DT theory of $\Jac(Q^{(3)},\tilde{W})$ enables us to calculate the characteristic function encoding the graded dimensions of $\CoHA_{\BA^2}$ (and from there, using purity again, the characteristic function of $\CoHA^T_{\BA^2}$ --- see Proposition \ref{pur_form}).  More than this, the cohomological DT theory of the category of $\Jac(Q^{(3)},\tilde{W})$-modules, and a thorough study of the deformed affine BPS Lie algebra defined in this paper, enable us to precisely determine the algebra structure on $\CoHA_{\BA^2}^T$.

\subsection{Results}
The algebra $W_{1+\infty}$ is defined to be the universal central extension of the Lie algebra of algebraic differential operators on the complex torus $\mathbb{C}^*$.  A basis for $W_{1+\infty}$ is provided by the set
\[
\{z ^mD^n\;\lvert\;m\in\BZ,n\in \BZ_{\geq 0}\}\cup\{c\}
\]
where we set $D=z(d /d z)$, and $c$ accounts for the central extension, which will not feature in this paper.  We use this basis to consider $W_{1+\infty}$ as a Lie algebra over $\BQ$.  We define 
\begin{equation}
\label{Wplus}
W_{1+\infty}^+=\mathrm{Span}_{\BQ}(z^mD^n\;\lvert \; m\in\BZ_{\geq 1},n\in\BZ_{\geq 0}),
\end{equation}
which is a Lie subalgebra of $W_{1+\infty}$.  The Lie bracket is given by
\begin{equation}
\label{expl_brack}
[z^mD^a,z^nD^b]=z^{m+n}((D+n)^aD^b-D^a(D+m)^b).
\end{equation}
The Lie algebra $W^+_{1+\infty}$ is $\mathbb{Z}_{\geq 0}$-graded, with $n$th graded piece spanned by the operators $z^nD^a$ for $a\geq 0$.  The Lie algebra $W^+_{1+\infty}$ has a $\mathbb{Z}$-filtration given by setting\footnote{There is a doubling of degree in our definition, so that the associated graded object lives only in even degrees, in order to match a later, cohomological grading of Hall algebras.} 
\[
F_iW^+_{1+\infty}=\mathrm{Span}(z^mD^a\;\lvert\; 0\leq a\leq (i+2)/2,m\in\BZ_{\geq 1}).
\]
We denote by $\Gr^F_{\bullet} \!W_{1+\infty}^+$ the associated graded Lie algebra.  From \eqref{expl_brack}, the induced Lie bracket on $\Gr^F_{\bullet} \!W_{1+\infty}^+$ is nontrivial; it is given  by
\begin{equation}
\label{GrW}
[z^mD^a,z^nD^b]=(an-bm)z^{m+n}D^{a+b-1}.
\end{equation}
So the associated graded Lie algebra $\Gr^F_{\bullet} \!W_{1+\infty}^+$ is isomorphic to (part of) the \textit{quantum torus} associated to the lattice $\BZ^2$ with skew-symmetric form $\omega((m,a),(n,b))=an-bm$.
\begin{thmx}[Theorem \ref{fder} and Corollary \ref{RH}]
\label{ThmA}
There is an isomorphism of Lie algebras
\[
\hat{\fg}_{Q^{(3)},\tilde{W}}\cong \Gr^F_{\bullet}\!\Walg_{1+\infty}^+
\]
between the affine BPS Lie algebra\footnote{As defined in \S \ref{CopSec}.} for the pair $(Q^{(3)},\tilde{W})$ and the associated graded Lie algebra of $\Walg_{1+\infty}^+$ with respect to the filtration $F_{\bullet}$.  As a result, there is an isomorphism of algebras
\[
\CoHA_{\BA^2}\cong \UEA(\Gr^F_{\bullet}\!\Walg_{1+\infty}^+).
\]
\end{thmx}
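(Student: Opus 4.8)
\emph{Reduction to the Lie algebra statement.} The plan is to obtain the algebra isomorphism $\CoHA_{\BA^2}\cong\UEA(\Gr^F_{\bullet}\!\Walg_{1+\infty}^+)$ formally from the Lie algebra isomorphism of Theorem~\ref{fder}. Indeed the dimensional reduction isomorphism $\CoHA_{Q^{(3)},\tilde W}\cong\CoHA_{\BA^2}$ of \S\ref{dimred_sec}, together with the cohomological integrality theorem in its affine form (\S\ref{CopSec}), which realises $\CoHA_{\BA^2}$ as $\UEA(\hat\fg_{Q^{(3)},\tilde W})$, reduce everything to building an isomorphism of $\HO_{\B\Gm}$-linear Lie algebras $\hat\fg_{Q^{(3)},\tilde W}\cong\Gr^F_{\bullet}\!\Walg_{1+\infty}^+$: applying $\UEA$ to such an isomorphism yields Corollary~\ref{RH}, and conversely the Lie statement is recovered by passing to primitives. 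I would carry this out in three steps: fix the underlying graded module, establish generation in charge one, then pin down the bracket.

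\emph{The underlying module and generation.} I would first show that in each charge $d\geq 1$ both Lie algebras are free of rank one over $\HO_{\B\Gm}\cong\QQ[u]$, with bottom class in one explicitly determined cohomological degree. For $\Gr^F_{\bullet}\!\Walg_{1+\infty}^+$ this is clear: the charge-$d$ component is $\mathrm{Span}(z^dD^j\mid j\geq 0)=\QQ[D]\cdot z^d$, and by \eqref{GrW} iterated brackets of the charge-one elements $\{zD^j\}$ recover every $z^mD^a$ with $m\geq 2$, so it is generated, under the bracket and the $\QQ[D]$-action, by its charge-one part. For $\hat\fg_{Q^{(3)},\tilde W}$ the key input is that, because $Q^{(3)}$ is the tripled quiver of the one-vertex Jordan quiver, the BPS cohomology $\BPS_{Q^{(3)},\tilde W,d}$ is one-dimensional, concentrated in a single degree, for every $d\geq 1$; this I would extract from the refined Donaldson--Thomas invariants of $\Jac(Q^{(3)},\tilde W)\cong\BC[x,y,z]$ computed in \cite{BBS}, using the purity of \cite{preproj} (equivalently Proposition~\ref{pur_form}) to upgrade that computation from an Euler characteristic statement to a statement about cohomology. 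By the construction in \S\ref{CopSec} the charge-$d$ piece $\hat\fg_{Q^{(3)},\tilde W,d}=\HO_{\B\Gm}\otimes\BPS_{Q^{(3)},\tilde W,d}$ is then free of rank one over $\HO_{\B\Gm}$, and I would invoke the structural theory of BPS Lie algebras of tripled quivers --- the generation results of \cite{QEAs} together with the shuffle-algebra description of $\CoHA_{\BA^2}$ \cite{ScVa13} --- to conclude that $\hat\fg_{Q^{(3)},\tilde W}$ is generated by its charge-one part, which is exactly the spherical generation asserted in the abstract. Throughout, the identifications $u\leftrightarrow D$ and $z^d\leftrightarrow$ charge $d$ are bookkeeping, the degree-doubling convention of the footnote ensuring a match of graded and not merely filtered objects.

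\emph{Rigidity and the bracket.} I would then isolate a \emph{rigidity lemma}: a $\BZ_{\geq 1}$-graded $\HO_{\B\Gm}$-linear Lie algebra that is free of rank one over $\HO_{\B\Gm}$ in each degree and generated by its degree-one part is determined up to isomorphism by the component $\hat\fg_1\otimes\hat\fg_1\to\hat\fg_2$ of its bracket; the proof is an induction on degree in which rank-one-ness forces every higher structure constant to equal the value imposed by the Jacobi identity in lower degrees. Granting this, it only remains to compute that one bracket on the Hall-algebra side and compare it with \eqref{GrW} at $m=n=1$. It is the antisymmetrisation of the lowest graded piece $\CoHA_{\BA^2,1}\otimes\CoHA_{\BA^2,1}\to\CoHA_{\BA^2,2}$ of the Hall product, followed by projection onto $\hat\fg_{Q^{(3)},\tilde W,2}$; I would evaluate it from the Hecke correspondence $\Mst_{1,1}\to\Mst_2$ parametrising length-two filtrations on the commuting stack $\Mst(\BC[x,y])$, together with the action of $\HO_{\B\Gm}$ by tautological classes, arriving at $[u^ae,u^be]=(a-b)\,u^{a+b-1}f$ --- exactly \eqref{GrW} with $m=n=1$. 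The rigidity lemma then delivers $\hat\fg_{Q^{(3)},\tilde W}\cong\Gr^F_{\bullet}\!\Walg_{1+\infty}^+$, and the first paragraph converts this into $\CoHA_{\BA^2}\cong\UEA(\Gr^F_{\bullet}\!\Walg_{1+\infty}^+)$.

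\emph{The main obstacle.} The real difficulty lies in the second step: promoting the \cite{BBS} Donaldson--Thomas computation to the statement that $\BPS_{Q^{(3)},\tilde W,d}$ is one-dimensional in a prescribed degree depends on the purity of \cite{preproj} being available in exactly this setting, and the charge-one generation of $\hat\fg_{Q^{(3)},\tilde W}$ is a genuinely nontrivial structural fact rather than a formal consequence of the rank-one picture. Given those, the remaining hazard is in the bracket computation on the commuting stack, where the signs and Tate twists introduced by dimensional reduction must be tracked carefully enough that the factor $(a-b)$ --- the shadow of the Weyl denominator of $\Walg_{1+\infty}^+$ --- emerges exactly, and not merely up to an undetermined invertible scalar.
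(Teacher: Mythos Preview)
Your overall architecture --- identify the underlying module, prove generation in charge one, then pin down the bracket --- is the same as the paper's. But the two substantive steps are not carried out, and in one place the formulation is actually wrong.

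The phrase ``$\HO_{\B\Gm}$-linear Lie algebra'' is the problem in your rigidity step. If the bracket were $\BQ[u]$-bilinear then $\fg_1\wedge_{\BQ[u]}\fg_1=0$ (rank one), so the degree-two bracket would vanish and generation would fail. What is true is that $u\bullet$ acts as a \emph{derivation} of the Hall product (Proposition~\ref{Aproof}); this is a much weaker constraint, and it is not clear that your proposed induction on degree goes through with only that and Jacobi. The paper does not attempt such an induction. Instead it builds a second operator $\partial_u$ (Proposition~\ref{lProp}) which is also a derivation and satisfies $[\partial_u,u\bullet]=d$ on charge $d$, so that $\CoHA_{\BA^2}$ is an algebra object in graded $\Heis$-modules (Theorem~\ref{Heis_lift}). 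The rigidity then comes from a clean universal property of $\Gr^F_\bullet W^+_{1+\infty}$ in that category (Lemma~\ref{univ_prop}): any Lie algebra object built from unshifted copies of $\Gr^F_\bullet V_{2m}$, generated by its charge-one piece, and with the correct characteristic function, is already $\Gr^F_\bullet W^+_{1+\infty}$ (Corollary~\ref{gen_cor}). No Hecke-correspondence bracket computation is required, and the identification $u\leftrightarrow D$ you suggest is not quite right either: the raising operator corresponds to $[D^2,-]$, not to multiplication by $D$ (Proposition~\ref{Dinj}).

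For generation, you correctly flag this as the hard point but your proposed route through \cite{QEAs} and the shuffle description does not supply it: neither reference proves that $\hat\fg_{Q^{(3)},\tilde W}$ is generated by charge one. The paper's argument is specific and geometric (Proposition~\ref{LehnWork} and Corollary~\ref{cor1}): one lets $\CoHA_{\BA^2}$ act on $\bigoplus_d\HO(\Hilb_d(\BA^2),\BQ)$, identifies $\alpha_1^{(0)}\star$ with Nakajima's operator $\fp_1$ and $\alpha_1^{(1)}\star$ with $\fp_1'=[u\bullet,\fp_1]$, and then invokes Lehn's relation $[\fp_1',\fp_n]=-n\fp_{n+1}$ (Theorem~\ref{LehnThm}) to see inductively that $\mathbf{ad}_{\alpha_1^{(1)}}^{d-1}(\alpha_1^{(0)})$ acts nontrivially, hence is a nonzero multiple of $\alpha_d^{(0)}$. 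That is the input you are missing.
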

The algebras $\Coha^{T}_{\BA^2}$ (for various $T$) have appeared in a great deal of work in geometric representation theory in the last decade.  One often restricts to the \textit{spherical subalgebra} of $\Coha^{T}_{\BA^2}$, i.e. the algebra generated by the subspace $\Coha^{T}_{\BA^2,1}$, corresponding to coherent sheaves on $\BA^2$ of length one.  One of the main consequences of Theorem \ref{ThmA} is the following:
\begin{thmx}[Theorem \ref{TSG}]
Let $T$ act on the quiver $Q^{(3)}$, leaving the potential $\tilde{W}$ invariant.  Then the CoHA $\CoHA_{\BA^2}^T$ is spherically generated, i.e. it is generated by the piece $\CoHA_{\BA^2,1}^T$ corresponding to coherent sheaves of length one.  In particular, $\CoHA_{\BA^2,1}$ is spherically generated.
\end{thmx}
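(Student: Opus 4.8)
The plan is to deduce the statement from the case $T=\{1\}$, combining Theorem~\ref{ThmA} with the purity of $\CoHA^T_{\BA^2}$ and a graded Nakayama argument. Consider first $T=\{1\}$. By Theorem~\ref{ThmA} we have $\CoHA_{\BA^2}\cong\UEA(\Gr^F_{\bullet}\!\Walg_{1+\infty}^+)$, and under the attendant PBW isomorphism the length-one piece $\CoHA_{\BA^2,1}$ is precisely the degree-one part $\mathrm{Span}(zD^a\mid a\geq 0)$ of the graded Lie algebra. For any $\BZ_{\geq 1}$-graded Lie algebra $\fg$, the subalgebra of $\UEA(\fg)$ generated by a graded subspace $V\subseteq\fg$ equals $\UEA$ of the Lie subalgebra generated by $V$; hence it suffices to show that $\Gr^F_{\bullet}\!\Walg_{1+\infty}^+$ is generated as a Lie algebra by its degree-one part. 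Let $\fg'$ be the Lie subalgebra it generates; we prove by induction on $n\geq 1$ that $z^nD^c\in\fg'$ for all $c\geq 0$. The case $n=1$ is the definition. For $n\geq 2$ and a given $c\geq 0$, pick $a\in\{0,\dots,c+1\}$ with $an\neq c+1$ (possible, since there are $c+2\geq 2$ admissible values of $a$ and at most one of them satisfies $an=c+1$); then by \eqref{GrW} one computes $[zD^a,z^{n-1}D^{c+1-a}]=(an-c-1)z^nD^c$ with $an-c-1\neq 0$, and both factors lie in $\fg'$ by induction, so $z^nD^c\in\fg'$. Thus $\CoHA_{\BA^2}$ is spherically generated.

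For general $T$, write $\HO_T\coloneqq\HO(\B T,\BQ)$. By the purity of $\CoHA^T_{\BA^2}$ (cf.\ Proposition~\ref{pur_form}), this algebra is free as an $\HO_T$-module, the reduction $\CoHA^T_{\BA^2}\otimes_{\HO_T}\BQ\to\CoHA_{\BA^2}$ along $\HO_T\to\BQ$ is an isomorphism of algebras, and every bigraded component of $\CoHA^T_{\BA^2}$ is finite-dimensional while, for each fixed length, the cohomological degrees occurring are bounded below. Let $\SS\subseteq\CoHA^T_{\BA^2}$ denote the spherical subalgebra, i.e.\ the $\HO_T$-subalgebra generated by $\CoHA^T_{\BA^2,1}$ — which necessarily contains $\CoHA^T_{\BA^2,0}=\HO_T$, so that ``spherically generated'' is the assertion $\SS=\CoHA^T_{\BA^2}$. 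Then $\SS$ is a graded $\HO_T$-submodule, and $M\coloneqq\CoHA^T_{\BA^2}/\SS$ is a graded $\HO_T$-module which, for each length, is finite-dimensional in every cohomological degree and bounded below. By right-exactness of $-\otimes_{\HO_T}\BQ$ and the freeness of $\CoHA^T_{\BA^2,1}$, the image of $\SS$ under the reduction map is a subalgebra of $\CoHA_{\BA^2}$ containing $\CoHA_{\BA^2,1}$, hence all of $\CoHA_{\BA^2}$ by the case already settled; therefore $M\otimes_{\HO_T}\BQ=0$. Since $\HO_T$ is connected graded and $M$ is bounded below in each length, graded Nakayama — applied length by length, which is legitimate even though $(\CoHA^T_{\BA^2})_n$ need not be finitely generated over $\HO_T$ — yields $M=0$, i.e.\ $\SS=\CoHA^T_{\BA^2}$, as claimed. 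Equivalently, in the language of Theorem~\ref{ThmA} this says the deformed affine BPS Lie algebra $\hat{\fg}^T_{Q^{(3)},\tilde{W}}$ is generated over $\HO_T$ by its length-one part, deduced from the undeformed case by the same Nakayama reduction.

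I expect the only genuinely substantial ingredient to be the one imported from earlier in the paper: the purity of $\CoHA^T_{\BA^2}$ — equivalently, its $\HO_T$-freeness with classical fibre $\CoHA_{\BA^2}$ — for \emph{every} torus $T$ preserving $\tilde{W}$, not merely the Calabi--Yau torus, since a priori the vanishing-cycle cohomology of the stacks $\Mst^T_d(\BC[x,y])$ has no reason to be pure. Granting that, the argument above is formal homological algebra; the only delicate point is bookkeeping the two gradings, so that Nakayama is run along the length grading, where the relevant modules, though not finitely generated over $\HO_T$, are bounded below and finite in each cohomological degree.
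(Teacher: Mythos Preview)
Your proof is correct and takes essentially the same approach as the paper: establish the $T=\{1\}$ case via Theorem~\ref{ThmA} (this is Corollary~\ref{RH}, with your induction on $n$ being a variant of Lemma~\ref{SphGen}), then lift to general $T$ using purity/freeness (equation~\eqref{TZ}) together with a graded Nakayama argument. The paper packages the Nakayama step slightly differently---it constructs an explicit morphism $\Phi\colon\Sym_{\HO_{T}}(\bigoplus_{d\geq 1}\BQ[u]\otimes\HO_{T})\to\CoHA^{T}_{\BA^2}$ out of iterated commutators $[\tilde{\alpha}_1^{(1)},-]^{d-1}(\tilde{\alpha}_1^{(m)})$ and checks it is a map of free $\HO_{T}$-modules that becomes an isomorphism modulo $\fm_{T}$---but this is exactly your argument applied to the cokernel of $\Phi$ rather than to $\CoHA^{T}_{\BA^2}/\SS$, and the logic is identical.
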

It may come as a temporary disappointment to the reader that the associated graded algebra of the filtered algebra $W^+_{1+\infty}$ appears in Theorem \ref{ThmA}, as opposed to $W^+_{1+\infty}$ itself.  This is, however, somewhat inevitable: the algebra $\CoHA_{Q^{(3)},\tilde{W}}$ is \textit{bi}graded, with one grading coming from the decomposition of the stack of finite-dimensional $\mathbb{C}[x,y]$-modules according to their dimension, and one grading coming from cohomological degree.  The Lie algebra $W^+_{1+\infty}$, by contrast, is graded according to the exponent of $z$, but only \textit{filtered} according to the exponent of $D$.
\smallbreak
To improve upon this situation, we let $\mathbb{C}^*$ act on $\mathbb{A}^2$ by scaling the first coordinate, and leaving the second invariant.  This induces a $\mathbb{C}^*$-action on $\Mst_d(\BC[x,y])$ for all $d$, and we take the equivariant Borel--Moore homology of this stack with respect to this action, and define the cohomological Hall algebra
\[
\CoHA_{\BA^2}^{\BC^*}\coloneqq \bigoplus_{d\in\BN}\HO^{\BM}(\Mst^{\BC^*}_d(\BC[x,y]),\BQ)
\]
to be this equivariant Borel--Moore homology, equipped with the usual multiplication via correspondences.  The algebra $\BQ[t]\cong \HO_{\BC^*}\coloneqq \HO(\B \BC^*,\BQ)$ acts on $\CoHA_{\BA^2}^{\BC^*}$, with the action induced by the $\BC^*$-action on $\BA^2$, and the Hall algebra multiplication is $\HO_{\BC^*}$-linear.  
\smallbreak
We denote by
\begin{equation}
\label{Rees}
\Rees_F[W_{1+\infty}^+]\coloneqq\sum_{i\geq -1}F_{2i}W_{1+\infty}^+t^{i}\BQ[t]\subset W_{1+\infty}^+\otimes_{\BQ} t^{-1}\BQ[t]
\end{equation}
the Rees Lie algebra.  This is a $\BQ[t]$-linear Lie algebra, and so may be thought of as a family of Lie algebras over the affine line, deforming the algebra $\Gr^F_{\bullet}\!W_{1+\infty}^+$ to $W_{1+\infty}^+$: for $\epsilon\neq 0$ the specialisation at $t=\epsilon$ is isomorphic to $W^+_{1+\infty}$, and the specialisation at $t=0$ is isomorphic to $\Gr^F_{\bullet}W^+_{1+\infty}$.
\smallbreak
Let $A$ be a commutative ring.  For $\fg$ an $A$-linear Lie algebra, we define the tensor algebra
\[
T_{A}(\fg)\coloneqq \bigoplus_{m\geq 0} \underbrace{\fg\otimes_{A}\ldots\otimes_{A}\fg}_{m \;\;\mathrm{ times}}
\]
and define 
\[
\UEA_{A}(\fg)=T_{A}(\fg)/\langle \alpha\cdotsh \beta-\beta\cdotsh\alpha-[\alpha,\beta]_{\fg}\;\lvert\; \alpha,\beta\in\fg\rangle.
\]

\begin{thmx}[Corollary \ref{HW_cor}]
Let $\BC^*$ act on $Q^{(3)}$ with weights $1,0,-1$ on the arrows $a,a^*,\omega$ respectively.  There is an isomorphism of $\HO_{\BC^*}=\BQ[t]$-linear Lie algebras
\[
\hat{\fg}_{Q^{(3)},\tilde{W}}^{\BC^*}\cong \Rees_F[W^+_{1+\infty}]
\]
between the deformed affine BPS Lie algebra for $(Q^{(3)},\tilde{W})$, introduced in \S \ref{CopSec}, and the Rees Lie algebra of $W^+_{1+\infty}$.  As a result, there is an isomorphism of $\BQ[t]$-linear algebras:
\[
\Coha^{\BC^*}_{\BA^2}\cong \UEA_{\BQ[t]}(\Rees_F[W^+_{1+\infty}]).
\]
\end{thmx}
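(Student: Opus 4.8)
The plan is to deduce the asserted algebra isomorphism from the Lie algebra isomorphism, and to prove the latter by combining the undeformed identification of Theorem \ref{fder} with a rigidity argument for flat graded deformations over $\BQ[t]=\HO_{\BC^*}$. For the reduction: the deformed cohomological integrality / PBW theorem of \S \ref{CopSec}, applied to $(Q^{(3)},\tilde{W})$ with the given $\BC^*$-action, together with the dimensional reduction isomorphism $\CoHA^{\BC^*}_{Q^{(3)},\tilde{W}}\cong\CoHA^{\BC^*}_{\BA^2}$ (recall that $\BC[x,y]$ is the preprojective algebra of the one-loop quiver, so the relevant moduli are of $2$CY type), yields an isomorphism of $\HO_{\BC^*}$-algebras $\CoHA^{\BC^*}_{\BA^2}\cong\UEA_{\BQ[t]}(\hat{\fg}_{Q^{(3)},\tilde{W}}^{\BC^*})$. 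Hence, granting the $\BQ[t]$-linear Lie algebra isomorphism $\hat{\fg}_{Q^{(3)},\tilde{W}}^{\BC^*}\cong\Rees_F[W^+_{1+\infty}]$, an application of $\UEA_{\BQ[t]}(-)$ produces the second displayed isomorphism, so it suffices to prove the Lie algebra statement.

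Both $\hat{\fg}_{Q^{(3)},\tilde{W}}^{\BC^*}$ and $\Rees_F[W^+_{1+\infty}]$ are bigraded (by the length $d$ of a sheaf and by cohomological degree) and flat over $\BQ[t]$: for the Rees Lie algebra this is immediate from \eqref{Rees}, and for the deformed affine BPS Lie algebra it follows from the purity of $\CoHA^{\BC^*}_{\BA^2}$ (Proposition \ref{pur_form}, \cite{preproj}), which forces freeness over $\BQ[t]$ and base-change compatibility of the construction of \S \ref{CopSec} along $\BQ[t]\to\BQ$. In particular the fibres at $t=0$ agree: $\Rees_F[W^+_{1+\infty}]\otimes_{\BQ[t]}\BQ\cong\Gr^F_{\bullet}W^+_{1+\infty}$ by construction, while $\hat{\fg}_{Q^{(3)},\tilde{W}}^{\BC^*}\otimes_{\BQ[t]}\BQ\cong\hat{\fg}_{Q^{(3)},\tilde{W}}\cong\Gr^F_{\bullet}W^+_{1+\infty}$ by Theorem \ref{fder}. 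The bigrading rigidifies the deformation: since $t$ is homogeneous (of cohomological degree $2$ in the degree-doubling convention used for $F_\bullet$), once one fixes a homogeneous $\BQ[t]$-basis lifting the monomials $z^mD^a$, every structure constant of $\hat{\fg}_{Q^{(3)},\tilde{W}}^{\BC^*}$ is forced to be a scalar times a single grading-determined power of $t$, and its $t^0$-part reproduces the bracket \eqref{GrW}. Comparing with \eqref{expl_brack}, this is exactly the shape of the bracket of $\Rees_F[W^+_{1+\infty}]$ written in the natural Rees generators $t^{\deg_F(z^mD^a)}z^mD^a$: leading term \eqref{GrW}, plus corrections of strictly lower order in $D$ carrying strictly positive (even) powers of $t$. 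Thus $\hat{\fg}_{Q^{(3)},\tilde{W}}^{\BC^*}$ and $\Rees_F[W^+_{1+\infty}]$ are both classified by classes in one fixed graded component of the Chevalley--Eilenberg cohomology $H^2_{\mathrm{CE}}(\Gr^F_{\bullet}W^+_{1+\infty};\Gr^F_{\bullet}W^+_{1+\infty})$, and it remains only to see that these classes agree up to a nonzero scalar, which amounts to a harmless rescaling of $t$.

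The crux is this last step: singling out $W^+_{1+\infty}$ among the flat graded deformations of the Poisson-type Lie algebra $\Gr^F_{\bullet}W^+_{1+\infty}$. I would handle it in two stages. First, compute the relevant graded component of $H^2_{\mathrm{CE}}(\Gr^F_{\bullet}W^+_{1+\infty};\Gr^F_{\bullet}W^+_{1+\infty})$ and show that it is one-dimensional; this is the Lie-theoretic shadow of the essential uniqueness of the quantisation of the Poisson algebra $\Gr^F_{\bullet}W^+_{1+\infty}$ by algebraic differential operators, and it forces any flat graded $\BQ[t]$-deformation of the above shape to be, up to rescaling $t$, either the trivial deformation $\Gr^F_{\bullet}W^+_{1+\infty}\otimes\BQ[t]$ or $\Rees_F[W^+_{1+\infty}]$ (the latter witnessing that the class is unobstructed). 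Second, exclude the trivial deformation by exhibiting one nonzero correction term: compute the $\BC^*$-equivariant Hall product of two length-one classes inside $\CoHA^{\BC^*}_{\BA^2,2}=\HOBM(\Mst^{\BC^*}_2(\BC[x,y]),\BQ)$ --- a space one can describe explicitly, the length-two part of the stack of finite-dimensional $\BC[x,y]$-modules --- and check that the commutator of the classes lifting $zD$ and $t^2zD^2$ acquires a nonzero lower-order (in $D$) term, in agreement with \eqref{expl_brack}. An alternative replacing both stages is to identify the generic fibre $\hat{\fg}_{Q^{(3)},\tilde{W}}^{\BC^*}\otimes_{\BQ[t]}\BQ(t)\cong W^+_{1+\infty}\otimes_{\BQ}\BQ(t)$ directly, via $\BC^*$-localisation relating $\CoHA^{\BC^*}_{\BA^2}$ to the action of $W^+_{1+\infty}$ on the Fock space $\bigoplus_{d}\HO^*(\Hilb_d(\BA^2),\BQ)$ in the spirit of \cite{ScVa13,Nak97}; then flatness over $\BQ[t]$, the special fibre $\Gr^F_{\bullet}W^+_{1+\infty}$, and the grading above pin $\hat{\fg}_{Q^{(3)},\tilde{W}}^{\BC^*}$ down to the Rees $\BQ[t]$-lattice $\Rees_F[W^+_{1+\infty}]$ inside $W^+_{1+\infty}\otimes\BQ(t)$.

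Putting these together gives $\hat{\fg}_{Q^{(3)},\tilde{W}}^{\BC^*}\cong\Rees_F[W^+_{1+\infty}]$ and hence, by the reduction of the first paragraph, $\CoHA^{\BC^*}_{\BA^2}\cong\UEA_{\BQ[t]}(\Rees_F[W^+_{1+\infty}])$. I expect the main obstacle to be exactly the rigidity input above: the scaffolding --- reduction through PBW, flatness from purity, and the grading constraint --- is routine given Theorem \ref{fder} and the constructions of \S \ref{CopSec}, whereas pinning down the deformation class, whether through the $H^2_{\mathrm{CE}}$ computation together with a single equivariant Hall-product calculation or through the $\BC^*$-localised Fock-space comparison, is where the genuine work lies.
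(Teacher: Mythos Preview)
Your reduction via PBW to a Lie algebra statement, and your observation that both sides are flat over $\BQ[t]$ with common special fibre $\Gr^F_\bullet W^+_{1+\infty}$ (by Theorem \ref{fder} and Proposition \ref{Tg}), are correct and match the paper's scaffolding. However, your core step---proving rigidity via a computation of the relevant graded piece of $H^2_{\mathrm{CE}}(\Gr^F_\bullet W^+_{1+\infty};\Gr^F_\bullet W^+_{1+\infty})$, or alternatively via $\BC^*$-localisation to the Fock module---is not carried out, and you correctly flag it as the genuine obstacle. Neither of these is routine: the Chevalley--Eilenberg computation for this infinite-dimensional Lie algebra is not standard, and the localisation route would require identifying the generic fibre with $W^+_{1+\infty}\otimes\BQ(t)$ as Lie algebras, which is essentially the statement you are trying to prove.

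The paper takes a completely different route that sidesteps deformation theory. Rather than working directly with the rank-one torus, it first passes \emph{up} to the full $T=(\BC^*)^2$ action and invokes Theorem \ref{RSYZ_thm} (due to Rap{\v{c}}{\'a}k--Soibelman--Yau--Zhao) together with the spherical generation result (Theorem \ref{TSG}) to obtain the exact isomorphism $\CoHA^T_{\BA^2}\cong\CY_{t_1,t_2,t_3}(\widehat{\mathfrak{gl}(1)})^+$ (Theorem \ref{Full_Yang}). It then specialises along $t_2=0$ via Proposition \ref{pur_form}. The key observation is that at $t_2=0$ one has $\sigma_3=t_1t_2t_3=0$, so the Yangian relations \eqref{id1}--\eqref{id2} become genuine Lie algebra identities; one then checks by hand that these identities hold in $\Rees_F[W^+_{1+\infty}]$ under $e_i\mapsto zD^i$, giving a surjection $\hat{\fg}^{\BC^*}_{\BA^2}\to\Rees_F[W^+_{1+\infty}]$ (surjectivity by spherical generation, Lemma \ref{SphGen}), which is then an isomorphism by matching generating series \eqref{gen_serg2}. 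So the paper replaces your abstract rigidity argument with an explicit presentation coming from the affine Yangian, and the ``one nonzero correction term'' you would need to compute is subsumed in the verification that \eqref{id1} holds in the Rees algebra.
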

\medbreak
Let $T=(\BC^*)^2$ act on $\BA^2$, by scaling the two coordinates independently.  From the above results, it is very natural to seek an algebraic description of the fully deformed CoHA $\CoHA^{T}_{\BA^2}$.  We denote by $\CY_{t_1,t_2,t_3}(\widehat{\mathfrak{gl}(1)})^+$ the strictly positive part of the affine Yangian; a presentation is recalled in \S \ref{SHY_sec}.  Our final theorem is a strengthening of a result of Rap{\v{c}}{\'a}k, Soibelman, Yang and Zhao \cite{RSYZ20}:% (in fact it is an immediate consequence of their result, along with spherical generation):
\begin{thmx}[Theorem \ref{Full_Yang}]
For $T=(\mathbb{C}^*)^2$ as above, there is an isomorphism of $\HO_T$-linear algebras
\[
\CoHA^{T}_{\BA^2}\cong \CY_{t_1,t_2,t_3}(\widehat{\mathfrak{gl}(1)})^+.
\]
\end{thmx}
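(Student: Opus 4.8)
The plan is to deduce this theorem by combining the spherical generation statement of Theorem \ref{TSG} with the results of \cite{RSYZ20}. Recall that in \cite{RSYZ20} one constructs a homomorphism of $\HO_T$-linear algebras
\[
\Phi\colon \CY_{t_1,t_2,t_3}(\widehat{\mathfrak{gl}(1)})^+\longrightarrow \CoHA^{T}_{\BA^2}
\]
sending the generators of the strictly positive part of the affine Yangian to explicit tautological classes lying in the length-one piece $\CoHA^{T}_{\BA^2,1}$, and one shows there that $\Phi$ is injective with image equal to the spherical subalgebra of $\CoHA^{T}_{\BA^2}$, i.e.\ the $\HO_T$-subalgebra generated by $\CoHA^{T}_{\BA^2,1}$. (If one prefers to extract from \cite{RSYZ20} only that $\Phi$ is a well-defined homomorphism whose restriction to the spherical subalgebra is an isomorphism, this already suffices for what follows.) First I would record that $\Phi$ is surjective: by construction its image is the subalgebra generated by $\CoHA^{T}_{\BA^2,1}$, and by Theorem \ref{TSG} this subalgebra is all of $\CoHA^{T}_{\BA^2}$. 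Since $\Phi$ is also injective, it is a bijective homomorphism, hence the desired isomorphism of $\HO_T$-linear algebras.

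As a cross-check — and as an alternative to invoking injectivity of $\Phi$ directly from \cite{RSYZ20} — one can argue the remaining implication by a dimension count. The strictly positive part of the affine Yangian has a bigraded Hilbert series that can be matched, term by term, against the characteristic series of $\CoHA^{T}_{\BA^2}$, which is computed via the purity result recorded in Proposition \ref{pur_form} (and ultimately reduces to the DT invariants of $\Jac(Q^{(3)},\tilde{W})\cong\BC[x,y,z]$ of \cite{BBS}). Given surjectivity of $\Phi$ from Theorem \ref{TSG}, equality of the bigraded dimensions of source and target forces $\Phi$ to be an isomorphism.

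The only genuine content is Theorem \ref{TSG}: everything in the present proof is bookkeeping, and the main obstacle is deferred to the proof of spherical generation, where the deformed affine BPS Lie algebra $\hat{\fg}^{T}_{Q^{(3)},\tilde{W}}$ and the structural analysis underlying Theorem \ref{ThmA} do the real work. The only technical point that genuinely needs attention here is that the equivariant parameters $t_1,t_2,t_3$ of the affine Yangian presentation of \S\ref{SHY_sec} are matched correctly with the equivariant parameters of $T=(\BC^*)^2$ acting on $\BA^2$ (recall that the relation $t_1+t_2+t_3=0$ corresponds precisely to $\tilde{W}$ being $T$-invariant), so that $\Phi$ is genuinely $\HO_T$-linear rather than merely linear over a subring; once this identification is fixed, the theorem follows formally.
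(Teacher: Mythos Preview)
Your proof is correct and follows essentially the same approach as the paper: the paper's own proof is the single sentence ``This is an immediate corollary of Theorems \ref{RSYZ_thm} and \ref{TSG},'' i.e.\ exactly the combination of the injectivity/spherical-image result from \cite{RSYZ20} with the spherical generation of Theorem \ref{TSG} that you spell out. Your additional dimension-count cross-check is not needed, but does no harm.
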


%\smallbreak
%For $Q$ a finite type ADE quiver, the same methods can be used to prove the following.
%\begin{thmx}
%\label{ThmB}
%Let $Q$ be a finite type ADE quiver, and let $\tilde{Q}$ be the corresponding tripled quiver, with canonical cubic potential
%\[
%\tilde{W}=(\sum_{a\in Q_1}[a,a^*])(\sum_{i\in Q_0}\omega_i).
%\]
%Then there is an isomorphism of algebras
%\[
%\CoHA_{Q^{(3)},\tilde{W}}\cong \UEA(\fn^-_Q\otimes \BQ[z])
%\]
%where $\fn^-$ is the negative half of the simple Lie algebra associated to the underlying Dynkin diagram of $Q$.
%\end{thmx}

\subsection{Notation and conventions}
\begin{itemize}
\item
Every cohomological Hall algebra that we consider throughout the paper will carry a $\HO(\B\BC^*,\BQ)$-action arising from the morphism from the stack of finite-dimensional modules over an algebra to $\B \BC^*$ given by taking a module $\rho$ to $\wedge^{\mathrm{top}}\rho$.  We denote this copy of $\B\BC^*$ by $\B \BC^*_u$, to distinguish it from other instances of $\B \BC^*$ arising from e.g. torus actions on quivers.  We continue to write, as in the introduction, $\HO_{\BC_u^*}\coloneqq\HO(\B \BC^*_u,\BQ)\cong\BQ[u]$, with $u$ in cohomological degree $2$.  
\item
More generally, if $T=(\BC^*)^l$ is a torus, we define $\HO_T\coloneqq \HO(\B T,\BQ)$.
\item
If a torus $T$ acts on a stack $\FX$ with quotient $\FY$, we define $({}^{\fp'}\!\tau^{\leq i}, {}^{\fp'}\!\tau^{\geq i})$ to be the shift of the usual perverse t structure on the derived category of constructible complexes on $\FY$, so that $\CF$ is in the heart if and only if $(\FX\rightarrow \FY)^*\CF$ is perverse (see \S \ref{toolkit}).
\item
We define $\BN\coloneqq\BZ_{\geq 0}$.
\end{itemize}

\subsection{Acknowledgements}
Special thanks go to Ezra Getzler, who encouraged me in this endeavour along with many others over many years.  Thanks go to Kevin Costello, Brian Williams, Sasha Minets and Shivang Jindal for helpful conversations about the results of this paper, and encouragement to write them down correctly.  Thanks also to Andrei Negu\c{t} for numerous helpful comments regarding an earlier draft, and Lucien Hennecart for spotting a typo in an advanced draft.  Thanks are also due to KIAS, and Minhyong Kim, for organizing an extremely stimulating stay in Seoul in Summer 2022, where this paper was finally written.  Finally, many thanks are due to the anonymous referee for their many useful corrections and suggestions.

\section{Algebraic background}
\subsection{Heisenberg algebras}
We denote by $\Heis$ the Lie algebra over $\BQ$ having basis $\{p,q,c\}$, subject to the relations
\begin{align*}
[q,p]=&c\\
[c,p]=[c,q]=&0.
\end{align*}
We denote by $\Heis\lmod$ the category of modules over $\Heis$.  We give $\Heis$ a (cohomological) $\BZ$-grading, putting $p,c,q$ in degrees $2,0,-2$ respectively.  A cohomologically graded $\Heis$-module is a $\Heis$-module $\rho$, along with a direct sum decomposition according to cohomological degree of the underlying vector space
\[
\rho=\bigoplus_{n\in\BZ}\rho^n
\]
such that $p$ maps $\rho^n$ to $\rho^{n+2}$, $q$ maps $\rho^n$ to $\rho^{n-2}$, and $c$ preserves the decomposition.  We denote by $\Heis\lmod_{\BZ}$ the category of cohomologically graded $\Heis$-modules.  A morphism of cohomologically graded $\Heis$-modules is a morphism of the underlying $\Heis$-modules that preserves the grading.  If $\rho$ is a cohomologically graded $\Heis$ module, and $n\in\BZ$, we denote by $\rho[n]$ the cohomologically graded $\Heis$-module obtained by setting $\rho[n]^i=\rho^{n+i}$.  
\smallbreak
For a quiver $Q$ with vertex set $Q_0$, we define the Lie algebra
\[
\Heis_Q'\coloneqq \bigoplus_{i\in Q_0}\Heis.
\]
For $i\in Q_0$ we denote by $p_i,q_i,c_i$ the basis elements in $\Heis'_Q$ corresponding to the $i$th summand.  We define the Lie subalgebra $\Heis_Q\subset \Heis'_Q$ to be the span of $\{p_i\;\lvert\; i\in Q_0\}\cup \{c_i\;\lvert\; i\in Q_0\}\cup \{q_{\Sigma}\coloneqq \sum_{i\in Q_0}q_i\}$.

For $\cc\in \mathbb{Z}$ we say that a $\Heis$-representation $\rho$ has central charge $\cc$ if $c$ acts on $\rho$ via scalar multiplication by $\cc$.  For $\cc\in \mathbb{Z}^{Q_0}$ we say that a $\Heis_Q$-representation $\rho$ has central charge $\cc$ if $c_i$ acts by multiplication by $\cc_i$.  There is a canonical inclusion 
\begin{equation}
\label{Heis_forg}
\Heis\hookrightarrow \Heis_Q
\end{equation}
sending $p\mapsto \sum_{i\in Q_0}p_i$, $q\mapsto q_{\Sigma}$ and $c\mapsto \sum_{i\in Q_0}c_i$.  We can consider a $\Heis_Q$-module as a $\Heis$-module via this inclusion.  Placing all $p_i,c_i,q_i$ in degrees $2,0,-2$ respectively, we give $\Heis_Q$ a $\BZ$-grading, and define the category $\Heis_Q\lmod_{\BZ}$ of graded modules as before.
\smallbreak
The vector space $\BQ_0=\BQ$ forms a $\Heis_Q$-module of central charge $0$ if we let all $p_i,q_i,c_i$ act via the zero matrix.  The categories $\Heis_Q\lmod$ and $\Heis_Q\lmod_{\BZ}$ are symmetric tensor categories with monoidal unit $\BQ_0$.  If $\mathscr{C}$ is a tensor category, a $\mathscr{C}$\textit{-algebra object} is an object $\CF$ in $\mathscr{C}$, along with morphisms $\mathbf{1}_{\mathscr{C}}\rightarrow \CF$ from the monoidal unit, and a multiplication morphism $\CF\otimes \CF\rightarrow \CF$, satisfying the standard properties.  If $\mathscr{C}$ is a symmetric tensor category, a $\mathscr{C}$\textit{-Lie algebra object} is an object $\CF$ in $\mathscr{C}$ along with an antisymmetric morphism $\CF\otimes\CF\rightarrow \CF$ satisfying the Jacobi identity.  If $\rho$ is a Lie algebra object in $\Heis_Q$, the universal enveloping algebra $\UEA(\rho)$ is an algebra object in $\Heis_Q$, with underlying algebra the universal enveloping algebra of the underlying Lie algebra of $\rho$.
\smallbreak
We say that a $\Heis_Q$-module $\rho$ is integrable if there is a decomposition
\begin{equation}
\label{ccdecomp}
\rho=\bigoplus_{\dd\in\BN^{Q_0}}\rho_{[\dd]}
\end{equation}
with each $\rho_{[\dd]}$ a $\Heis_Q$-module of central charge $\dd$.  Similarly, we say that a $\Heis$-module $\rho$ is integrable if it admits a decomposition $\rho=\bigoplus_{d\in\BN}\rho_{[d]}$ with $\rho_{[d]}$ of central charge $d$.
\smallbreak
Let $\Heis_Q$ act on the $\BN^{Q_0}$-graded algebra $\mathcal{A}$, where the $\BN^{Q_0}$-grading is induced by a decomposition of $\CA$ into sub-modules of fixed central charge, as in \eqref{ccdecomp}.  Then $\CA$ is an algebra object in $\Heis_Q$ if each $p_i$ for $i\in Q_0$, and $q_{\Sigma}$, act by derivations on $\CA$.  Similarly, if $\fn$ is a $\BN^{Q_0}$-graded Lie algebra, and also an integrable $\Heis_Q$-module for which the $\BN^{Q_0}$-graded decomposition is provided by the decomposition according to central charge, then $\fn$ is a Lie algebra object in $\Heis_Q$ if and only if for every $i\in Q_0$, $p_i$ and $q_{\Sigma}$ act by Lie algebra derivations.
\smallbreak
%{\textcolor{red}{ May be deleted...}}
%Let $\fn$ be a $\BN^{Q_0}$-graded Lie algebra, and let $\fn\otimes\BQ[z]$ denote the $\BQ[z]$-linear extension, i.e. the Lie algebra with bracket $[\alpha\otimes z^m,\beta\otimes z^n]=[\alpha,\beta]z^{m+n}$.  We define a $\Heis_Q$-action as follows: if $\alpha\in\fn_{\dd}$ then
%\begin{align*}
%p_i\cdot(\alpha\otimes z^m)&=\dd_i (\alpha\otimes z^{m+1})\\
%q_i\cdot(\alpha\otimes z^m)&=m(\alpha\otimes z^{m-1})\\
%c_i\cdot(\alpha\otimes z^m)&=\cc_i(\alpha\otimes z^m).
%\end{align*}
%The $\dd$-weight space of $\fn$ for $\fn$ is then the $\dd$-weight space for the given grading on $\fn$.
%\smallbreak

\subsection{The Lie algebra $W_{1+\infty}^+$}
Recall from \eqref{Wplus} the definition of $W^+_{1+\infty}\subset W_{1+\infty}$.  The Lie bracket on $W^+_{1+\infty}$ is given by
\begin{equation}
\label{expl_brac}
[z^mf(D),z^ng(D)]=z^{m+n}(f(D+n)g(D)-f(D)g(D+m)).
\end{equation}
We set 
\[
\tilde{D}^{(a)}\coloneqq D^at^{a-1}\in \Rees_F[W^+_{1+\infty}].
\]
The Lie algebra $\Rees_F[W^+_{1+\infty}]$ (defined as in \eqref{Rees}) is spanned by elements $z^m\tilde{D}^{(a)}t^r$ with $m\geq 1, a\geq 0, r\geq 0$.  We give $z^m\tilde{D}^{(a)}t^r=z^mD^at^{r+a-1}$ cohomological degree $2r+2a-2$, making $\Rees_F[W^+_{1+\infty}]$ into a cohomologically graded Lie algebra.  

%Recall that we give $\Heis$ a cohomological grading by putting $p,c,q$ in cohomological degrees $2,0,-2$ respectively.  
\begin{proposition}
\label{Dinj}
\begin{itemize}
\item
There is an action of $\Heis$ on $W^+_{1+\infty}$ by derivations, given by sending $p$ to $[D^2,-]$ and sending $q$ to the endomorphism
\begin{align*}
\partial_D\colon &W^+_{1+\infty}\rightarrow W^+_{1+\infty}\\
&z^mf(D)\mapsto z^mf'(D).
\end{align*}
For $m\geq 1$, the subspace $\mathrm{Span}(z^mD^n\;\lvert \; n\in \BN)$ is preserved by this action, on which it has central charge $2m$.
\item
There is an action of $\Heis$ on $\Rees_F[W^+_{1+\infty}]$ by derivations, sending $p$ to $t[D^2,-]$ and $q$ to the endomorphism $t^{-1}\partial_D$, making $\Rees_F[W^+_{1+\infty}]$ into a cohomologically graded $\Heis$-module.  For $m\geq 1$, the subspace \\$\mathrm{Span}(z^m\tilde{D}^nt^r\;\lvert \; n,r\in \BN)$ is preserved by this action, on which it has central charge $2m$.
\end{itemize}
\end{proposition}
\begin{proof}
We prove the first part, the second is similar.  The fact that $[D^2,-]$ is a derivation follows from the Jacobi identity, while the fact that $\partial_D$ is a derivation follows from the formula \eqref{expl_brac}.  Now we calculate
\begin{align*}
[\partial_D,[D^2,-]](z^mD^n)&=\partial_D[D^2,z^mD^n]-[D^2,nz^mD^{n-1}]\\
&=[2D,z^mD^n]+[D^2,nz^mD^{n-1}]-[D^2,nz^mD^{n-1}]\\
&=2z^m((D+m)D^n-D^{1+n})\\
&=2mz^mD^n
\end{align*}
as required.
\end{proof}
For $m\in2 \cdotsh\BZ_{\geq 1}$ we denote by $V_{[m]}\subset W^+_{1+\infty}$ the $\Heis$-submodule with central charge $m$.  By the above proposition, it has a basis given by elements $z^{m/2}D^n$.  If we let $\BQ[u]$ act on $V_{[m]}$ by letting $u$ act via the raising operator $p=[D^2,-]$, then all of the $V_{[m]}$ are isomorphic to $\BQ[u]$, considered as $\BQ[u]$-modules.  
\smallbreak
We endow $V_{{[2m]}}$ with a filtration by setting 
\begin{align*}
F_iV_{{[2m]}}=\begin{cases} \mathrm{Span}_{\BQ}(z^{m},z^mD,\ldots, z^mD^{i/2+1})&\textrm{if }i\textrm{ is even}\\
F_{i-1}V_{{[2m]}}&\textrm{if }i\textrm{ is odd.}\end{cases}
\end{align*}
So if $i$ is odd, the $i$th piece of the associated graded object $\Gr^F_i\!V_{{[2m]}}$ is zero, and if $i\in \BZ_{\geq -1}$ then a basis of $\Gr^F_{2i}\!V_{{[2m]}}$ is given by $\{z^{m}D^{i+1}\}$.  We have $pF_iV_{[2m]}\subset F_{i+2}V_{[2m]}$ and $qF_iV_{[2m]}\subset F_{i-2}V_{[2m]}$, so that $\Gr^F_{\bullet}\!V_{[2m]}$ becomes a cohomologically graded $\Heis$-module, where the cohomological degree of $z^mD^n$ is $2n-2$.  Taking the direct sum of the filtrations above, we obtain the \textit{order} filtration $F_{\bullet}$ on $W^+_{1+\infty}$, considered in the introduction.  The Lie bracket on $W^+_{1+\infty}$ preserves the order filtration, and there is an isomorphism of Lie algebra objects in the category $\Heis\lmod_{\BZ}$:
\begin{align*}
\Psi\colon \Gr^F_{\bullet}\!W^+_{1+\infty}&\rightarrow \Rees_F[W^+_{1+\infty}]\otimes_{\BQ[t]}(\BQ[t]/(t))\\
z^mD^n&\mapsto z^m\tilde{D}^{(n)}\otimes 1.
\end{align*}
\smallbreak

\begin{lemma}
\label{SphGen}
The Lie algebras $\Gr^F_{\bullet}\!W^+_{1+\infty}$ and $\Rees_F[W^+_{1+\infty}]$ are spherically generated: they are generated by their respective $\Heis$-submodules of central charge $2$.
\end{lemma}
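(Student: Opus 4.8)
The plan is to reduce both statements to a single computation: showing that the graded Lie algebra $\Gr^F_{\bullet}\!W^+_{1+\infty}$ is generated by $\Gr^F_{\bullet}\!V_2$, the central charge $2$ piece. The Rees case will then follow formally, since $\Rees_F[W^+_{1+\infty}]$ is a $\BQ[t]$-deformation of $\Gr^F_{\bullet}\!W^+_{1+\infty}$ whose specialisation at $t=0$ is the associated graded, and generation of a module over $\BQ[t]$ that is free (each $V_m$ being a free $\BQ[u]$-module, hence each Rees piece free over $\BQ[t]$) can be checked after specialising at $t=0$ by a graded Nakayama argument. So the first paragraph of the proof will set up this reduction, invoking the $\BN^{Q_0}$-graded (here $\BN$-graded, $Q_0$ being a point) structure and the fact that the order filtration is preserved by the bracket.

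The core computation goes by induction on central charge $m$. By \eqref{GrW}, in $\Gr^F_{\bullet}\!W^+_{1+\infty}$ the bracket is $[z^mD^a,z^nD^b]=(an-bm)z^{m+n}D^{a+b-1}$. Starting from the central charge $2$ generators $z^2D^a$ for $a\geq 0$, I first observe that brackets of two central charge $2$ elements $[z^2D^a,z^2D^b]=2(a-b)z^4D^{a+b-1}$ already produce every $z^4D^c$ with $c\geq 1$ (take $b=0$, vary $a\geq 1$), but miss $z^4D^0$. To fix this I bracket $z^2D^0$ with $z^2D^0$ — which gives $0$ — so instead I need $z^4=z^4D^0$ to come from a bracket involving a higher-$D$ generator at central charge $2$: namely $[z^2D^a, z^2D^b]$ with $a+b-1=0$ forces $\{a,b\}=\{0,1\}$, giving $[z^2D,z^2D^0]=2z^4$, so in fact $z^4$ \emph{is} obtained. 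Thus all of $\Gr^F_{\bullet}\!V_4$ lies in the subalgebra generated by $\Gr^F_{\bullet}\!V_2$. For the inductive step, assuming everything of central charge $<m$ is generated, I produce $z^mD^c$ for each $c\geq 0$ as a bracket $[z^2D^a, z^{m-2}D^b]=(a(m-2)-2b)z^mD^{a+b-1}$: given $c\geq 0$ I must choose $a\geq 0$, $b\geq 0$ with $a+b=c+1$ and $a(m-2)-2b\neq 0$, i.e. $a(m-2)\neq 2(c+1-a)$, i.e. $am\neq 2(c+1)$; since $m\geq 4$ there are at least two admissible values of $a$ in $\{0,1,\dots,c+1\}$ (the constraint $am=2(c+1)$ excludes at most one), so a nonzero bracket exists. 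This handles all $c$, completing the induction.

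The step I expect to be the main (really the only) obstacle is the low-central-charge base case and the verification that the "bad" coefficient $am=2(c+1)$ never rules out \emph{all} choices of $a$; this requires $m\geq 3$, which is why the induction must start at $m=2$ with the explicit check above rather than appealing to the generic argument. One must also be slightly careful that the monomials $z^mD^c$ genuinely span $\Gr^F_{\bullet}\!V_m$ in the cohomological grading — which is exactly the content of the basis statement recorded before Lemma \ref{SphGen} — and that the bracket respects both gradings, so that "generated as a Lie algebra" can be checked degree by degree. Once the graded statement is in hand, the Rees statement is immediate from freeness over $\BQ[t]$ and the specialisation-at-$t=0$ isomorphism $\Rees_F[W^+_{1+\infty}]/(t)\cong \Gr^F_{\bullet}\!W^+_{1+\infty}$ together with the graded Nakayama lemma applied to the quotient of $\Rees_F[W^+_{1+\infty}]$ by the subalgebra generated by its central charge $2$ part.
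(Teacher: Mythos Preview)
Your proposal has a genuine gap rooted in a misidentification of the generating set. In the paper's conventions (see Proposition~\ref{Dinj} and the line defining $V_m$), the span of $z^mD^n$ for fixed $m$ has central charge $2m$, and $V_m$ denotes the submodule of central charge $m$. Hence the central charge $2$ piece is $V_2=\mathrm{Span}(zD^a\mid a\geq 0)$, \emph{not} $\mathrm{Span}(z^2D^a)$. The paper's own proof confirms this: it writes $\Rees[W^+_{1+\infty}]_{[2]}=\mathrm{Span}(zD^a\mid a\geq 0)$.

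Taken literally, your induction starts from the generators $z^2D^a$ and proceeds in steps of $2$ in the $z$-exponent (you explicitly write ``since $m\geq 4$'' and bracket against $z^{m-2}D^b$). Consequently you only ever produce $z^mD^c$ with $m$ even; the elements $z,z^3D^c,\ldots$ are never reached, so the Lie subalgebra you generate is a proper subalgebra. This is not a minor oversight you can patch within the same framework: odd $z$-powers simply cannot arise from iterated brackets of $z^2D^a$'s.

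The fix is straightforward once you use the correct generators $zD^a$. Your inductive scheme then becomes: base case $m=1$ trivial; for $m\geq 2$, write $[zD^a,z^{m-1}D^b]=(a(m-1)-b)z^mD^{a+b-1}$ and choose $a+b=c+1$ with $am\neq c+1$, which is always possible. This is essentially what the paper does, though in a slightly different order: it first shows (in the Rees algebra) that $[zD,z^m]$ produces $z^{m+1}$, obtaining all $z^m$, and then uses $[zD^a,z^m]=z^{m+1}(maD^{a-1}+\text{lower order})$ to climb in $a$ for fixed $m$.

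On the overall structure: you propose to prove the $\Gr^F$ case first and deduce the Rees case by graded Nakayama, while the paper proves the Rees case directly and obtains $\Gr^F$ by specialising at $t=0$ (or directly from \eqref{GrW}). Both directions are legitimate; your Nakayama reduction is correct since the Lie subalgebra generated by a $\BQ[t]$-submodule in a $\BQ[t]$-linear Lie algebra is itself a $\BQ[t]$-submodule, and each bigraded piece of the quotient is finite-dimensional. The paper's route has the mild advantage of avoiding that extra argument, but yours works too once the generating set is corrected.
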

\begin{proof}
Let $\fg\subset \Rees_F[W^+_{1+\infty}]$ be the Lie subalgebra generated by $\Rees_F[W^+_{1+\infty}]_{[2]}=\mathrm{Span}(z\tilde{D}^{(a)}t^r\;\lvert \; a\geq 0,r\geq 0)$.  For $a,m\geq 1$, the identity \eqref{expl_brack} yields
\begin{align}
\nonumber [z\tilde{D}^{(a)},z^m\tilde{D}^{(0)}]=&t^{a-2}z^{m+1}((D+m)^a-D^a)\\
=&z^{m+1}(ma\tilde{D}^{(a-1)}+\textrm{lower order terms in }\tilde{D}).\label{prol}
\end{align}
We induct on $m$, and assume that $z^{m'}\tilde{D}^{(a')}\in \Rees_F[W^+_{1+\infty}]$ for every $1\leq m'<m$ and $a'\geq 0$.  Substituting $m\mapsto m-1$ and $a\mapsto 1$ in \eqref{prol} we deduce that $z^m\tilde{D}^{(0)}\in\mathfrak{g}$.  Then we induct on $a$, and assume that $z^{m}\tilde{D}^{(a')}\in \Rees_F[W^+_{1+\infty}]$ for every $0\leq a'<a$.  Substituting $a\mapsto a+1$ and $m\mapsto m-1$ in \eqref{prol} we deduce that $z^m\tilde{D}^{(a)}\in\mathfrak{g}$ as required for the inductive step.

Spherical generation of $\Gr^F_{\bullet}\!W^+_{1+\infty}$ follows directly from \eqref{GrW}, or from the spherical generation of $\Rees_F[W^+_{1+\infty}]$ and the isomorphism $\Psi$.
\end{proof}

We record a variant of the result, for later.  It follows from \eqref{GrW}.
\begin{lemma}
\label{Blawan}
The elements, defined for all $n,m\geq 0$ by
\[
\begin{cases}
(\mathbf{ad}_{zD})^{m}(zD^n)& \textrm{if }m<n \textrm{ or } n=0\\
((\mathbf{ad}_{zD})^{m-n}\circ \mathbf{ad}_{D}\circ (\mathbf{ad}_{zD})^{n-1})(zD^{n+1}) & \textrm{otherwise}
\end{cases}
\]
provide a basis for $W^+_{1+\infty}$.
\end{lemma}
The Lie algebra $\Gr^F_{\bullet} \!W^+_{1+\infty}$ carries two gradings, one by central charge, and one given by the cohomological grading, and $\Gr^F_{\bullet} \!W^+_{1+\infty}$ is a Lie algebra object in $\Heis\lmod_{\BZ}$.  It turns out that this object satisfies a simple universal property: the Lie algebra $\Gr^F_{\bullet} \!W^+_{1+\infty}$ is the universal Lie algebra object in $\Heis\lmod_{\BZ}$ containing $\Gr^F_{\bullet}\!V_{[2]}$, and containing only representations $\Gr^F_{\bullet}\!V_{[m]}$ for various $m$ as subrepresentations (i.e. no copies of $\Gr^F_{\bullet}V_{[m]}[j]$ with $j\neq 0$)).  More precisely,
\begin{lemma}
\label{univ_prop}
If $\mathcal{H}$ is a Lie algebra object in $\Heis\lmod_{\BZ}$ containing only unshifted representations $\Gr^F_{\bullet}\! V_{[m]}$ as cohomologically graded $\Heis$-subrepresentations, and $\iota\colon \Gr^F_{\bullet} \! V_{[2]}\rightarrow \mathcal{H}$ is an inclusion in $\Heis\lmod_{\BZ}$, then $\iota$ extends uniquely to a morphism of $\Heis$-Lie algebra objects $\Gr^F_{\bullet}\! W^+_{1+\infty}\rightarrow \mathcal{H}$.
\end{lemma}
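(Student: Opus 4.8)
The plan is to prove uniqueness and existence of the extension separately; I work throughout with $\Heis$, the statement for $\Heis_Q$ following in exactly the same way. Uniqueness is immediate from Lemma~\ref{SphGen}: since $\Gr^F_{\bullet}\!W^+_{1+\infty}$ is generated as a Lie algebra by $\Gr^F_{\bullet}\!V_2$, any morphism of $\Heis$-Lie algebra objects extending $\iota$ is already determined by $\iota$. For existence I would first extract two structural inputs from the hypothesis on $\mathcal H$. Since each $\Gr^F_{\bullet}\!V_m$ is a simple $\Heis$-module, the assumption that $\mathcal H$ contains only unshifted copies $\Gr^F_{\bullet}\!V_m$ as subrepresentations forces $\mathcal H$ to be semisimple over $\Heis$; hence it splits by central charge, $\mathcal H=\bigoplus_d\mathcal H_{[d]}$, with the Lie bracket respecting this and the cohomological grading, every $q$-closed vector of $\mathcal H$ sits in cohomological degree $-2$, and each $\mathcal H_{[d]}$ is concentrated in cohomological degrees $\geq -2$. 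The second input is a tensor-product computation: for $a,b\geq 1$ the $\Heis$-module $(\Gr^F_{\bullet}\!W^+_{1+\infty})_{[2a]}\otimes(\Gr^F_{\bullet}\!W^+_{1+\infty})_{[2b]}$ is a direct sum of shifted copies of $\Gr^F_{\bullet}\!V_{2(a+b)}$, exactly one of which is unshifted — the submodule generated by the lowest-weight vector $w_{a,b}:=z^aD\otimes z^b-z^a\otimes z^bD$ — because its space of lowest-weight vectors is $\BQ[bp_1-ap_2]\cdot(z^a\otimes z^b)$. Consequently every $\Heis$-morphism from this tensor product into $\mathcal H$ annihilates all the shifted summands and is therefore determined by the image of $w_{a,b}$.

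Next I would construct $\Phi$ one central charge at a time. Each $(\Gr^F_{\bullet}\!W^+_{1+\infty})_{[2k]}$ is a single copy of $\Gr^F_{\bullet}\!V_{2k}$ with lowest-weight vector $z^k$, so giving a $\Heis$-morphism out of it amounts to choosing the image of $z^k$ among the degree-$(-2)$ lowest-weight vectors of $\mathcal H_{[2k]}$. Set $P_1:=\iota(z)$ and, for $k\geq 2$, $\Phi(z^k)=:P_k:=\tfrac{1}{k-1}[\iota(zD),P_{k-1}]$. This is admissible because $[\iota(zD),P_{k-1}]$ has cohomological degree $-2$ and $q[\iota(zD),P_{k-1}]=[\iota(z),P_{k-1}]$ vanishes, lying in cohomological degree $-4$. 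Extending $\Heis$-equivariantly — so that $\Phi(z^aD^m)=(2a)^{-m}p^m(P_a)$, where $p$ also denotes the action of $p\in\Heis$ on $\mathcal H$, a derivation of its bracket, and $\iota(zD)=\tfrac12 p(P_1)$ — produces a $\Heis$-module morphism $\Phi\colon\Gr^F_{\bullet}\!W^+_{1+\infty}\to\mathcal H$ restricting to $\iota$.

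It then remains to check that $\Phi$ respects brackets. Grading by central charge and invoking the tensor-product input, $\Phi[x,y]=[\Phi x,\Phi y]$ reduces to evaluating both sides on $w_{a,b}$, i.e.\ to the identities $(a+b)P_{a+b}=[\Phi(z^aD),P_b]-[P_a,\Phi(z^bD)]$ for all $a,b\geq 1$. Using that $p$ acts by derivations, that $[P_a,P_b]=0$ (such a bracket would lie in cohomological degree $-4$), and that $\Phi(z^aD)=\tfrac1{2a}p(P_a)$, this rewrites as the single statement $(\star_{a,b})\colon[\tfrac1{2a}p(P_a),P_b]=bP_{a+b}$; the same manipulations show $(\star_{a,b})\Leftrightarrow(\star_{b,a})$, and the case $a=1$ is precisely the defining recursion for the $P_k$.

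The main obstacle is the inductive step $a\geq 2$ of $(\star)$. Expanding $p(P_a)$ via $P_a=\tfrac1{a-1}[\iota(zD),P_{a-1}]$ and the derivation rule produces the brackets $[\iota(zD^2),P_b]$ and $[\iota(zD),\Phi(z^bD)]$, which depend on one another circularly if one uses only the raising operator $p$. This is broken by bringing in the lowering operator $q$: the element $[\iota(zD^2),P_b]-2b\,\Phi(z^{b+1}D)$ is annihilated by $q$ — using $q\iota(zD^2)=2\iota(zD)$, $q\Phi(z^{b+1}D)=P_{b+1}$, and $(\star_{1,b})$ — and sits in cohomological degree $0$, so it vanishes, yielding the unconditional identity $[\iota(zD^2),P_b]=2b\,\Phi(z^{b+1}D)$. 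With this in hand, the Jacobi identity, the vanishing $[P_{a-1},P_b]=0$, and the instances $(\star_{a-1,\ast})$ and $(\star_{\ast,a-1})$ available by induction (the latter through the symmetry $(\star_{c,d})\Leftrightarrow(\star_{d,c})$), a short computation collecting the rational coefficients gives $[\tfrac1{2a}p(P_a),P_b]=bP_{a+b}$. This closes the induction and completes the proof.
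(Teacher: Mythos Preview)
Your proof is correct, but it takes a genuinely different route from the paper's. The paper argues abstractly: it identifies $\Gr^F_{\bullet}\!W^+_{1+\infty}$ with the quotient $\mathcal{L}'$ of the free Lie algebra on $\Gr^F_{\bullet}\!V_2$ by the $\Heis$-ideal generated by all shifted subrepresentations, and then shows by induction on tensor degree that $\mathcal{L}'_m\cong\Gr^F_{\bullet}\!V_{2m}$ using the same tensor-product decomposition you use (so that the surjection $\mathcal{L}'\to\Gr^F_{\bullet}\!W^+_{1+\infty}$ is forced to be an isomorphism by a dimension count). The universal property then falls out formally, since any map $\mathcal{L}\to\mathcal{H}$ kills the shifted subrepresentations by hypothesis.

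Your argument, by contrast, builds the extension $\Phi$ explicitly via the recursion $P_k=\tfrac{1}{k-1}[\iota(zD),P_{k-1}]$ and then verifies the Lie bracket relation by reducing to the single family $(\star_{a,b})$ and running an induction, with the key unconditional identity $[\iota(zD^2),P_b]=2b\,\Phi(z^{b+1}D)$ obtained by a lowering-operator argument. Both approaches rest on the same structural fact---that $\Gr^F_{\bullet}\!V_{2a}\otimes\Gr^F_{\bullet}\!V_{2b}$ contains exactly one unshifted copy of $\Gr^F_{\bullet}\!V_{2(a+b)}$---but the paper uses it to bound the size of a universal object, while you use it to reduce a morphism check to a single lowest-weight vector. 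The paper's route is shorter and more conceptual; yours is more explicit and has the advantage of actually writing down the generators $P_k$ inside $\mathcal{H}$, which could be useful if one later needs to compute with the map.
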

\begin{proof}
The lemma is equivalent to the claim that $\Gr^F_{\bullet}\!W^+_{1+\infty}$ is isomorphic to $\mathcal{L}'=\mathcal{L}/\mathcal{I}$, where $\mathcal{L}$ is the free $\mathbb{Z}$-graded Lie algebra $\mathcal{L}$ generated by $\Gr^F_{\bullet}\!V_{[2]}$, and $\mathcal{I}$ is the Lie ideal containing all the $\BZ$-graded $\Heis$-submodules of $\mathcal{L}$ that are not isomorphic to unshifted copies of $\Gr^F_{\bullet}\!V_{[m]}$ for various $m$.  
\smallbreak
The Lie algebra $\mathcal{L}'$ carries a grading by tensor degree, and we claim that $\mathcal{L}'_m\cong \Gr^F_{\bullet}\!V_{[2m]}$, where on the left hand side of this isomorphism the subscript denotes the degree with respect to the tensor degree grading.  This obviously implies the lemma.  We prove the claim by induction.  The base case $m=1$ is trivial, and so we assume the claim is proved for $i< m$, with $m\geq 2$.  By the inductive hypothesis, the $m$th summand $\mathcal{L}'_m$ is either a quotient of 
\[
\Gr^F_{\bullet}\!V_{[2]}\wedge \Gr^F_{\bullet}\!V_{[2]}\cong \bigoplus_{i\geq 0}\Gr^F_{\bullet}\!V_{[4]}[-4i]
\]
if $m=2$, or 
\[
\Gr^F_{\bullet}\!V_{[2]}\otimes \Gr^F_{\bullet}\!V_{[2m-2]}\cong \bigoplus_{i\geq 0}\Gr^F_{\bullet}\!V_{[2m]}[2-2i]
\]
if $m\geq 3$.  In either case, at most a single unshifted copy of $\Gr^F_{\bullet}\! V_{[2m]}$ survives after we quotient out by all of the shifted copies of $\Gr^F_{\bullet}\!V_{[2m]}$.  On the other hand, $\CL'_m$ contains at least one copy of  $\Gr^F_{\bullet}\!V_{[2m]}$, since by its universal property, and spherical generation of $\Gr^F_{\bullet}\!W^+_{1+\infty}$, there is a surjective morphism $\CL'\rightarrow \Gr^F_{\bullet}\!W^+_{1+\infty}$.  The inductive claim follows.
\end{proof}
Given an integrable cohomologically-graded $\Heis$-module $\rho$, we define the characteristic function
\[
\chi(\rho)\coloneqq \sum_{i,j\in\mathbb{Z}}\dim(\rho_{[i]}^j)v^iq^{j},
\]
where $\rho_{[i]}^j$ denotes the $j$th cohomologically graded piece of the summand of $\rho$ with central charge $i$.
\begin{corollary}
\label{gen_cor}
Let $\mathcal{H}$ be an integrable $\BZ$-graded $\Heis$-module generated as a Lie algebra by $\Gr^F_{\bullet}\!V_{[2]}\subset \mathcal{H}$ and satisfying
\begin{align*}
\chi(\mathcal{H})=&\chi(\Gr^F_{\bullet}\!W^+_{1+\infty})\\
=&\sum_{i\geq 1,j\geq -1}v^{2i}q^{2j}\\
=&v^2q^{-2}(1-v^2)^{-1}(1-q^2)^{-1}.
\end{align*}
Then $\mathcal{H}\cong\Gr^F_{\bullet}\!W^+_{1+\infty}$. 
\end{corollary}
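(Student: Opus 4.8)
The plan is to exhibit $\mathcal{H}$ as a quotient of $\Gr^F_{\bullet}\!W^+_{1+\infty}$ by invoking the universal property of Lemma \ref{univ_prop}, and then to promote the resulting surjection to an isomorphism using the equality of characteristic functions. Throughout, $\mathcal{H}$ is regarded as a Lie algebra object in $\Heis\lmod_{\BZ}$, generated by $\Gr^F_{\bullet}\!V_2$ as a Lie algebra; this is the setting in which the statement is a consequence of Lemma \ref{univ_prop}.

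The first and only substantive step is to read off the $\Heis$-module structure of $\mathcal{H}$ from its characteristic function. By integrability, $\mathcal{H}=\bigoplus_{\dd\in\BN}\mathcal{H}_{[\dd]}$ with $c$ acting on $\mathcal{H}_{[\dd]}$ by the scalar $\dd$, and the hypothesis $\chi(\mathcal{H})=t^2q^{-2}(1-t^2)^{-1}(1-q^2)^{-1}$ forces $\mathcal{H}_{[\dd]}=0$ unless $\dd=2i$ for some $i\geq 1$, in which case $\mathcal{H}_{[2i]}$ is one-dimensional in each even cohomological degree $\geq -2$ and zero in all other degrees. Fix $i\geq 1$ and let $v$ span $\mathcal{H}_{[2i]}^{-2}$. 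Then $qv=0$ for degree reasons, and the relation $[q,p]=c$, with $c$ acting by the nonzero scalar $2i$, gives $q^np^nv=n!\,(2i)^nv\neq 0$, so $p^nv\neq 0$ for all $n\geq 0$; comparing graded dimensions, the $p^nv$ form a basis of $\mathcal{H}_{[2i]}$. Hence the $\Heis$-submodule of $\mathcal{H}_{[2i]}$ generated by $v$ is all of $\mathcal{H}_{[2i]}$, and it is isomorphic to $\Gr^F_{\bullet}\!V_{2i}$. In particular $\mathcal{H}$ contains only unshifted copies of the $\Gr^F_{\bullet}\!V_m$ as $\Heis$-subrepresentations, so the hypotheses of Lemma \ref{univ_prop} (with $Q$ a single vertex) are met.

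Next I would apply Lemma \ref{univ_prop} to the inclusion $\iota\colon\Gr^F_{\bullet}\!V_2\hookrightarrow\mathcal{H}$ (the case $i=1$ above), obtaining a morphism of $\Heis$-Lie algebra objects $\phi\colon\Gr^F_{\bullet}\!W^+_{1+\infty}\to\mathcal{H}$ extending $\iota$. Since $\mathcal{H}$ is generated as a Lie algebra by $\iota(\Gr^F_{\bullet}\!V_2)\subseteq\image(\phi)$ and $\image(\phi)$ is a sub-Lie-algebra, $\phi$ is surjective. Finally, $\phi$ preserves both the central-charge and the cohomological gradings, and by hypothesis $\chi(\mathcal{H})=\chi(\Gr^F_{\bullet}\!W^+_{1+\infty})$; since every bigraded piece of $\Gr^F_{\bullet}\!W^+_{1+\infty}$ is finite-dimensional (in fact of dimension $\leq 1$), a bidegree-preserving surjection whose source and target have equal Hilbert series in every bidegree must be an isomorphism. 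Therefore $\phi$ is an isomorphism.

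I expect the main obstacle to be precisely the first step: extracting from the bare equality of characteristic functions the structural fact that each positive-central-charge graded piece of $\mathcal{H}$ is forced to be the standard lowest-weight Heisenberg module $\Gr^F_{\bullet}\!V_{2i}$, which is exactly what is needed in order to apply Lemma \ref{univ_prop}. The identity $q^np^nv=n!\,(2i)^nv$ with nonzero central charge is what makes this work; everything afterwards is a formal "generation plus dimension count" argument.
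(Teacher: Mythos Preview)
Your proof is correct and follows essentially the same route as the paper: apply Lemma \ref{univ_prop} to get a morphism $\Gr^F_{\bullet}\!W^+_{1+\infty}\to\mathcal{H}$, deduce surjectivity from generation by $\Gr^F_{\bullet}\!V_2$, and conclude injectivity from the equality of characteristic functions. Your first paragraph, verifying from the characteristic function that each $\mathcal{H}_{[2i]}\cong\Gr^F_{\bullet}\!V_{2i}$ via the lowest-weight argument $q^np^nv=n!(2i)^nv$, explicitly checks the ``only unshifted copies of $\Gr^F_{\bullet}\!V_m$'' hypothesis of Lemma \ref{univ_prop}, which the paper's proof leaves implicit; this is a genuine improvement in completeness rather than a different method.
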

\begin{proof}
By the universal property of $\Gr^F_{\bullet}\!W^+_{1+\infty}$ (Lemma \ref{univ_prop}) there is a morphism of Lie algebra objects in $\Heis\lmod_{\BZ}$
\[
f\colon \Gr^F_{\bullet}\!W^+_{1+\infty}\rightarrow \CH
\]
sending the copy of $\Gr^F_{\bullet}\!V_{[2]}$ inside $\Gr^F_{\bullet}\!W^+_{1+\infty}$ isomorphically to the given copy inside $\CH$.  By supposition, $f$ is surjective.  On the other hand, the source and the target have the same characteristic function, so $f$ is an isomorphism.
\end{proof}

\section{DT theory background}

\subsection{Vanishing cycle toolkit}
\label{toolkit}
If $\FX$ is a smooth stack, we denote by $\BQ_{\FX}^{\vir}=\BQ_{\FX}[\dim(\FX)]$ the constant perverse sheaf on $\FX$.  More generally, if an extra torus $T$ acts on a smooth stack $\FX$ with $T$-quotient $\FY$, we define\footnote{The motivation for this slightly mixed convention is that we would like $\BQ_{\FY}^{\vir}$ to correspond to a $T$-equivariant perverse sheaf on $\FY$, i.e. we would like the pullback $(\FX\rightarrow \FY)^*\BQ_{\FY}^{\vir}$ to be perverse.} $\BQ_{\FY}^{\vir}=\BQ_{\FY}[\dim(\FX)]$.  Under the same conditions, we modify the usual perverse truncation functors for constructible complexes of sheaves on $\FY$ by setting ${}^{\fp'}\!\!\tau^{\leq i}={}^{\fp}\tau^{\leq i-\dim(T)}$.
\smallbreak
Given a function $f$ on a smooth stack $X$, we define the vanishing cycle functor $\phip{f}$ as in \cite{KSsheaves}.  It is an exact functor with respect to the perverse t structure, and in particular $\phip{f}\BQ_{X}^{\vir}$ is a perverse sheaf on $X$, which we may refer to just as the sheaf of vanishing cycles on $X$.  We will often abuse notation by just writing $\HO(X,\phip{f})$ instead of $\HO(X,\phip{f}\BQ_{X}^{\vir})$.
\smallbreak
Let $f,g$ be functions on smooth stacks $X,Y$ respectively.  The Thom--Sebastiani theorem \cite{Ma01} states that there is a natural bifunctorial equivalence of constructible complexes of sheaves on $X\times Y$
\[
\phip{f\boxplus g}(\CF\boxtimes\CG)\lvert_{f^{-1}(0)\times g^{-1}(0)}\cong\phip{f}\CF\boxtimes\phip{g}\CG.
\]
Assuming that $\supp(\phip{f\boxplus g}(\CF\boxtimes\CG))\subset f^{-1}(0)\times g^{-1}(0)$, applying the derived global sections functor to the above isomorphism yields the isomorphism
\[
\TS\colon \HO(X,\phip{f}\CF)\otimes \HO(Y,\phip{g}\CG)\cong \HO(X\times Y,\phip{f\boxplus g}(\CF\boxtimes \CG)),
\]
the analogue in vanishing cycle cohomology of the K\"unneth isomorphism in ordinary cohomology.
\smallbreak
Given a morphism $q\colon X\rightarrow Y$ and a function $f$ on $Y$ there is a natural transformation $\theta\colon\phip{f}q_*\rightarrow q_*\phip{fq}$, which is an isomorphism if $q$ is projective.  The natural transformation $\theta$ induces the natural transformation $\eta\colon\phip{f}\rightarrow q_*\phip{f}q^*$ by adjunction.  Applying the derived global sections functor, $\eta$ induces the pullback morphism
\begin{equation}
\label{pbmorph}
q^{\star}\colon \HO(Y,\phip{f}\CF)\rightarrow \HO(X,\phip{fq}q^*\CF).
\end{equation}
Given a proper morphism $p\colon X\rightarrow Y$ between smooth stacks and a function $f$ on $Y$, we obtain the pushforward morphism
\begin{equation}
\label{pfmorph}
p_{\star}\colon \HO(X,\phip{pf}\BQ_{X})[2\dim(X)]\rightarrow \HO(Y,\phip{f}\BQ)[2\dim(Y)]
\end{equation}
by applying $\phip{f}$ to $p_*\BQ_{X}[2\dim(X)]\rightarrow \BQ_{Y}[2\dim(Y)]$ (the Verdier dual of $\BQ_Y\rightarrow p_*\BQ_X$), and composing with $\theta^{-1}$.
\smallbreak
Let $\FX$ be a smooth stack, $f$ be a function on $\FX$, and $\CF\in\Dbc(\FX)$ be a constructible complex.  Let $\Delta\colon \FX\rightarrow \FX\times \FX$ be the diagonal embedding.  Then $f=(f\boxplus 0)\circ\Delta$ and there is a natural isomorphism $\CF\cong \Delta^*(\CF\boxtimes \BQ_{\FX})$.  The pullback morphism $\Delta^{\star}$, composed with the Thom-Sebastiani isomorphism, yields
\[
\HO(\FX,\phip{f}\CF)\otimes\HO(\FX,\BQ)\rightarrow \HO(\FX,\phip{f}\CF),
\]
and an action of the cohomology of $\FX$ on $\HO(\FX,\phip{f}\CF)$.  If $f=0$ this is just the usual action of $\HO(\FX,\BQ)$ on $\HO(\FX,\CF)$, and if in addition $\CF=\BQ_{\FX}$ this is the usual associative algebra structure on $\HO(\FX,\BQ)$.  If $q\colon \FX\rightarrow \FY$ is a morphism of smooth stacks, from the commutativity of
\[
\xymatrix{
\FX\ar[r]^q\ar[d]_{(\id\times q)\circ\Delta}&\FY\ar[d]^{\Delta}\\ \FX\times \FY\ar[r]^{q\times \id}&\FY\times \FY
}
\]
we deduce that \eqref{pbmorph} respects the $\HO(\FY,\BQ)$-actions on the domain and target.  If $q$ is projective, one may show as in \cite[Prop.2.14]{Da13} that \eqref{pfmorph} respects the $\HO(\FY,\BQ)$ actions on the domain and target.
\subsection{Quivers}
\label{Quiver_sec}
Throughout the paper, $Q$ will denote a finite quiver, i.e. a pair of finite sets $Q_1$ and $Q_0$ (the arrows and vertices, respectively) along with a pair of morphisms $s,t\colon Q_1\rightarrow Q_0$.  The free path algebra $kQ$ of $Q$ over the field $k$ is the algebra having as $k$-basis the paths in $Q$, including ``lazy'' paths $e_i$ of length zero at each of the vertices $i\in Q_0$.
\smallbreak
We define the Euler form
\begin{align*}
\chi_Q\colon &\BN^{Q_0}\times\BN^{Q_0}\rightarrow \BZ\\
&(\dd,\ee)\mapsto(\dd,\ee)_Q\coloneqq\sum_{i\in Q_0}\dd_i\ee_i-\sum_{a\in Q_1}\dd_{s(a)}\ee_{t(a)}.
\end{align*}
\smallbreak
The \textit{double} of $Q$ is obtained by setting $\overline{Q}_0=Q_0$ and $\overline{Q}_1=Q_1\coprod Q_1^*$, where $Q_1^*\coloneqq \{ a^* \;\lvert\; a\in Q_1\}$ and the orientation of $a^*$ is the opposite of the orientation of $a$.  The preprojective algebra is defined by
\[
\Pi_Q\coloneqq \CC \overline{Q}/\langle \sum_{a\in Q_1} [a,a^*]\rangle.
\]
The quiver $\tilde{Q}$ is defined by adding a loop $\omega_i$ to each vertex $i\in \overline{Q}_0$.
\begin{example}
If $Q=Q^{(1)}$ is the Jordan quiver with one vertex and one loop $a$, then $\tilde{Q}=Q^{(3)}$ is the three loop quiver, with one vertex and loops labelled $a,a^*,\omega$.
\end{example}

A \textit{potential} $W\in\CC Q/[\CC Q,\CC Q]_{\mathrm{vect}}$ is a linear combination of cyclic words in $Q$.  If $W=a_1\ldots a_l$ is a single cyclic word then
\[
\partial W/\partial a\coloneqq\sum_{a_i=a}a_{i+1}\ldots a_la_1\ldots a_{i-1}.
\]
We extend this definition to general potentials by linearity.  The \textit{Jacobi algebra} associated to the pair $(Q,W)$ is defined by
\[
\Jac(Q,W)\coloneqq \CC Q/\langle \partial W/\partial a \;\lvert \; a\in Q_1\rangle.
\]
If $Q$ is an arbitrary quiver, the tripled quiver $\tilde{Q}$ carries the \textit{canonical cubic potential} 
\[
\tilde{W}\coloneqq\left(\sum_{i\in Q_0}\omega_i\right)\left(\sum_{a\in Q_1}[a,a^*]\right).
\]
Let $\Pi_Q[\omega]$ denote the polynomial algebra in one variable $\omega$ with coefficients in $\Pi_Q$.  There is an isomorphism
\begin{align}
\label{TJA}
j\colon &\Pi_Q[\omega]\rightarrow \Jac(\tilde{Q},\tilde{W})
\end{align}
defined by $j(a)=a$, $j(a^*)=a^*$, $j(\omega)=\sum_{i\in Q_0} \omega_i$.

\subsection{Moduli spaces}
Given a quiver $Q$ and dimension vector $\dd\in\BN^{Q_0}$ we define
\[
\AA_{Q,\dd}\coloneqq\prod_{a\in Q_1}\Hom(\CC^{\dd(s(a))},\CC^{\dd(t(a))});\quad \quad \Gl_{\dd}\coloneqq \prod_{i\in Q_0} \Gl_{\dd(i)}.
\]
The group $\Gl_{\dd}$ acts on $\AA_{Q,\dd}$ by conjugation.  The moduli stack $\Mst_{\dd}(Q)$, when evaluated on a scheme $X$, yields the groupoid of coherent sheaves on $X$, flat over $X$, equipped with an action of $\BC Q$, such that at every geometric $K$-point of $X$, with $K\supset \mathbb{C}$, the induced $KQ$-module is $\dd$-dimensional.  This stack is naturally isomorphic to the stack quotient $\AA_{Q,\dd}/\Gl_{\dd}$.
\smallbreak
Let $N=\BZ^r$ be a lattice, and fix a function $\tau\colon Q_1\rightarrow N$.  We fix the torus $T=\Hom_{\mathrm{Grp}}(N,\BC^*)$.  The torus $T$ acts on $\AA_{Q,\dd}$ by sending
\[
(\mathbf{t},(\rho_a)_{a\in Q_1})\mapsto (\mathbf{t}(\tau(a))\cdot \rho_a)_{a\in Q_1},
\]
and this action commutes with the action of $\Gl_{\dd}$ by conjugation.  We denote by
\[
\Mst^T_{\dd}(Q)\cong \AA_{Q,\dd}/(\Gl_{\dd}\times T)
\]
the quotient of $\Mst_{\dd}(Q)$ by the $T$-action.  Precisely, a homomorphism $X\rightarrow \Mst^T_{\dd}(Q)$ is provided by a $T$-torsor $E\rightarrow X$, along with a flat family $\CF$ of $\dd$-dimensional $\BC Q$-modules on $E$, and a $T$-equivariant structure on $\mathcal{F}$, such that for every $a\in Q_1$ the following diagram of coherent sheaves on $T\times E$ commutes:
\[
\xymatrix{
l^*\CF\ar[d]^{\cong}\ar[rrr]^-{l^*\rho(a)}&&&l^*\CF\ar[d]^{\cong}\\
p^*\CF\ar[rrr]_-{\prod_{i\leq r}z_i^{\tau(a)_i}\cdot p^*\rho(a)}&&&p^*\CF.
}
\]  
Here, $l\colon T\times E\rightarrow E$ is the action, $p\colon T\times E\rightarrow E$ is the projection, $\rho(a)$ is the action of $a$ on $\CF$, $z_i$ are coordinates on $T$, and the vertical isomorphisms are defined by the $T$-equivariant structure on $\CF$.  
%We also explicitly describe the natural morphism
%\begin{equation}
%\label{shfn}
%s\colon\Mst^T_{\dd'}(Q')\times\Mst^T_{\dd''}(Q')\rightarrow \Mst^T_{\dd'}(Q')\times_{\B T}\Mst^T_{\dd''}(Q')
%\end{equation}
%where $\dd',\dd''\in\BN^{Q'_0}$.  If $p'\colon E'\rightarrow X$ and $p'\colon E''\rightarrow X$ are a pair of $T$-torsors on a scheme $X$, we start by giving the trivial vector bundle $\mathscr{O}_{E''}$ the trivial $T$-equivariant structure.  Then if $\CF$ is a $T$-equivariant coherent sheaf on $E'$, we first consider $\pi^*\CF$, where $\pi\colon E'\times_X E''\rightarrow E'$ is the projection.  This is naturally a $T\times T$-equivariant sheaf (via the trivial $T$-equivariant structure on $\mathscr{O}_{E''}$) that descends to a $T$-equivariant sheaf on $E'\times_T E''$ (along with the action of $\rho(a)$ for each $a\in Q_1$), which we denote $\CF\times_T E''$.  Then the morphism $s$ is given by
%\begin{equation}
%\label{shii}
%s((p'\colon E'\rightarrow X,\CF'),(p''\colon E''\rightarrow X,\CF''))=(E'\times_T E'',\CF'\times_T E'',\CF''\times_T E').
%\end{equation}
We will use the morphism
\begin{equation}
\label{lhfn}
\jmath\colon\Mst^T_{\dd'}(Q)\times_{\B T}\Mst^T_{\dd''}(Q)\rightarrow \Mst^T_{\dd'}(Q)\times\Mst^T_{\dd''}(Q)
\end{equation}
induced by the structure map $\B T\rightarrow \pt$.
\smallbreak
We denote by 
\[
\Msp_{\dd}(Q)=\Spec(\Gamma(\BA_{Q,\dd})^{\Gl_{\dd}})
\]
the coarse moduli space.  For $K\supset \BC$ a field, $K$-points of $\Msp_{\dd}(Q)$ correspond to isomorphism classes of semisimple $\dd$-dimensional $K Q$-modules.  Since the $T$-action commutes with the $\Gl_{\dd}$-action, $\Msp_{\dd}(Q)$ carries a natural $T$-action.  We define the stack-theoretic quotient
\[
\Msp^T_{\dd}(Q)\coloneqq \Msp_{\dd}(Q)/T.
\]
In the case of trivial $T$, there is a natural morphism
\[
\JH\colon \Mst_{\dd}(Q)\rightarrow \Msp_{\dd}(Q)
\]
which, at the level of points, takes modules to their semisimplifications, and at the level of algebraic geometry is just the affinization morphism.  This extends naturally to a morphism
\begin{equation}
\label{JHT_def}
\JH^T\colon \Mst_{\dd}^T(Q)\rightarrow \Msp^T_{\dd}(Q).
\end{equation}
Precisely, if the morphism $f\colon X\rightarrow \Mst^T_{\dd}(Q)$ is represented by the $T$-torsor $E\rightarrow X$ and the morphism $\tilde{f}\colon E\rightarrow \Mst_{\dd}(Q)$, the morphism $\JH^T\circ f$ is represented by the same torsor $E$, along with the morphism $\JH\circ \tilde{f}$.
\smallbreak
Given two dimension vectors $\dd',\dd''\in\BN^{Q_0}$ with $\dd=\dd'+\dd''$ we define $\AA_{Q,\dd',\dd''}\subset \AA_{Q,\dd}$ and $\Para_{\dd',\dd''}\subset \Gl_{\dd}$ to be the space of representations, and changes of basis, respectively, preserving the flags $0\subset \BC^{\dd'(i)}\subset \BC^{\dd(i)}$ for each $i\in Q_0$.  Denoting by $\Mst_{\dd',\dd''}(Q)$ the stack of short exact sequences $0\rightarrow \rho'\rightarrow \rho\rightarrow \rho''\rightarrow 0$ of $\BC Q$-modules, for which the dimension vectors of $\rho'$ and $\rho''$ are $\dd',\dd''$ respectively, there is a natural isomorphism
\[
\Mst_{\dd',\dd''}(Q)\cong\AA_{Q,\dd',\dd''}/\Gl_{\dd',\dd''}.
\]
We define $\Mst_{\dd',\dd''}^T(Q)\coloneqq\AA_{Q',\dd',\dd''}/(\Gl_{\dd',\dd''}\times T)$.

\subsection{Cohomological Hall algebras}
\label{CoHAsec}
Let $Q$ be a quiver.  Let $W\in\BC Q/[\BC Q,\BC Q]_{\mathrm{vect}}$ be a potential, which we assume to be $T$-invariant in the following sense: for every cyclic word $a_1\ldots a_c$ appearing in $W$
\begin{equation}
\label{WTI}
\sum_{i=1}^c\tau(a_i)=0.
\end{equation}
The potential $W$ induces a function $\Tr(W)$ on $\AA_{Q,\dd}$ which is $\Gl_{\dd}$-invariant by cyclic invariance of the trace map, and $T$-invariant by \eqref{WTI}.  We define
\begin{equation}
\label{undobj}
\CoHA^T_{Q,W}\coloneqq \bigoplus_{\dd\in\BN^{Q_0}}\HO(\Mst^T_{\dd}(Q),\phip{\Tr(W)})
\end{equation}
the underlying $\mathbb{N}^{Q_0}$-graded vector space of the critical cohomological Hall algebra, defined by Kontsevich and Soibelman in \cite{KS2}.  Consider the convolution diagram
\[
\xymatrix{
\Mst^T_{\dd'}(Q)\times_{\B T}\Mst^T_{\dd''}(Q)&\ar[l]_-{\pi_1\times \pi_3}\Mst^T_{\dd',\dd''}(Q)\ar[r]^-{\pi_2}& \Mst^T_{\dd}(Q)
}
\]
where $\pi_1,\pi_2,\pi_3$ take a flag $0\subset \CF'\subset \CF$ of flat families of $\BC Q$-modules to the families $\CF'$, $\CF$, $\CF/\CF'$ respectively.
We define 
\begin{align}
m'_{\dd',\dd''}\colon&\HO\left(\Mst^T_{\dd'}(Q)\times_{\B T}\Mst^T_{\dd''}(Q),\phip{\Tr(W)\boxplus\Tr(W)}\right)\rightarrow \HO\left(\Mst^T_{\dd}(Q),\phip{\Tr(W)}\right)\label{mdashdef}\\
&=\pi_{2,\star}\circ(\pi_1\times\pi_3)^{\star}\nonumber
\end{align}
and $m_{\dd',\dd''}=m'_{\dd',\dd''}\circ \jmath^{\star}\circ \TS$ where $\jmath$ is as in \eqref{lhfn} and $\TS$ is the Thom--Sebastiani isomorphism.  We then define the multiplication
\[
m\colon \CoHA^T_{Q,W}\otimes \CoHA^T_{Q,W}\rightarrow \CoHA^T_{Q,W}
\]
by summing $m_{\dd',\dd''}$ over all pairs of dimension vectors.

The multiplication $m$ makes $\CoHA^T_{Q,W}$ into a $\BN^{Q_0}$-graded associative algebra.  If we assume that $Q$ is \textit{symmetric}, meaning that for every pair of vertices $i,j\in Q_0$ there are as many arrows from $i$ to $j$ as there are from $j$ to $i$, then the morphism $m$ preserves the cohomological degree, and $\CoHA^T_{Q,W}$ is an associative algebra in the category of $\BN^{Q_0}\oplus\BZ$-graded vector spaces.
\smallbreak
Let $\mathcal{F}$ be a flat family of $\dd$-dimensional $\mathbb{C}Q$-modules on a scheme $X$.  For each $i\in Q_0$ we obtain a vector bundle $\CF_i=e_i\cdotsh \CF$ of rank $\dd_i$.  There is a determinant line bundle $\Det(\CF_i)$ on $X$, which is defined to be the top exterior power of the underlying bundle of $\CF_i$.  This defines a morphism of stacks for each $i\in Q_0$:
\[
\Det_{(i)}\colon \Mst^T_{\dd}(Q)\rightarrow \B \BC_u^*.
\]
Alternatively, we can construct this morphism as the composition
\begin{align*}
\Det_{(i)}=&\left(\Det\colon \B\Gl_{\dd_i}\rightarrow \B\BC^*_u\right)\circ \left(\pi_{\pt/\Gl_{\dd_i}}\colon\pt/(\Gl_{\dd_i}\times T)\rightarrow \B \Gl_{\dd_i}\right) \\&\circ \left(\AA_{Q,\dd}/(\Gl_{\dd}\times T)\rightarrow \pt/(\Gl_{\dd}\times T)\right).
\end{align*}
We denote by $u_{(i)}\in \HO(\Mst^T_{\dd}(Q),\BQ)$ the image of $u\in\HO_{\BC^*_u}$ under $\Det_{(i)}^{\star}$.  Consider the morphism
\begin{align*}
l_i=(\Det_{(i)}, \id)\colon &\Mst^T_{\dd}(Q)\rightarrow\B \BC_u^*\times  \Mst^T_{\dd}(Q).
\end{align*}
Since the function $\Tr(W)$ is pulled back from the function $0\boxplus\Tr(W)$ on the target of $l_i$ we obtain the morphism in vanishing cycle cohomology
\[
\HO(\B \BC^*_u,\BQ)\otimes \HO(\Mst^T_{\dd}(Q),\phip{\Tr(W)}) \xrightarrow{l_i^{\star}} \HO(\Mst^T_{\dd}(Q),\phip{\Tr(W)})
\]
which defines an action of the algebra $\mathbb{Q}[u]=\HO_{\BC^*_u}$ on $\CoHA^T_{Q,W}$, for each $i\in Q_0$: 
\begin{equation}
\label{lidef}
p(u)\bullet_i \alpha\coloneqq l_i^{\star}(p(u)\otimes \alpha).
\end{equation}
Similarly, we define the morphism
\begin{align}
\label{ldef}
l=(\bigotimes_{i\in Q_0}\Det_{(i)}, \id)\colon &\Mst^T_{\dd}(Q)\rightarrow\B \BC_u^*\times  \Mst^T_{\dd}(Q)
\end{align}
and an action of $\HO_{\BC_u^*}$ on $\CoHA^T_{Q,W}$ by 
\begin{equation}
\label{udef}
p(u)\bullet\alpha=l^{\star}(p(u)\otimes\alpha).
\end{equation}

\subsection{PBW isomorphism}
In this section we assume $T=\{1\}$ for simplicity, and also assume that $Q$ is symmetric.  It follows that there exists a (non-unique) bilinear form
\[
\psi\colon\mathbb{Z}^{Q_0}\times\mathbb{Z}^{Q_0}\rightarrow \mathbb{Z}^{Q_0}
\]
satisfying 
\begin{align*}
\psi(\dd',\dd'')+\psi(\dd'',\dd')=\chi_{Q}(\dd',\dd')\chi_{Q}(\dd'',\dd'')+\chi_{Q}(\dd',\dd'')&\quad\textrm{mod }2
\end{align*}
for all $\dd',\dd''$.  We fix a choice of such $\psi$ in what follows, and denote by $\CoHA^{\psi}_{Q,W}$ the cohomological Hall algebra $\CoHA_{Q,W}$ with the multiplication twisted by setting 
\[
m^{\TW}_{\dd',\dd''}=(-1)^{\psi(\dd',\dd'')}m_{\dd',\dd''}
\]
where $m_{\dd',\dd''}$ and $m^{\TW}_{\dd',\dd''}$ are the restrictions of the products in $\CoHA_{Q,W}$ (resp. $\CoHA^{\psi}_{Q,W}$) to the pieces of degree $\dd',\dd''$.  In our main application, we will have that $Q=Q^{(3)}$, so that $\chi_{Q}$ is even, and we can (and will) set $\psi=0$.  
\begin{remark}
In what follows, if we write that something is true of $\CoHA^{(\psi)}_{Q,W}$, we mean that it is true of both $\CoHA^{\psi}_{Q,W}$ and $\CoHA_{Q,W}$.  Note that the underlying graded vector spaces of these algebras are identical.
\end{remark}

By \cite{QEAs} the direct image $\JH_*\phip{\Tr(W)}\BQ_{\Mst_{\dd}(Q)}^{\vir}$ splits: there is an isomorphism in $\Dub^+(\Perv(\Msp_{\dd}(Q)))$:
\begin{equation}
\label{splitting}
\JH_*\phip{\Tr(W)}\BQ_{\Mst_{\dd}(Q)}^{\vir}\cong \bigoplus_{i\geq 1}{}^{\fp}\!\Ho^i(\JH_*\phip{\Tr(W)}\BQ_{\Mst_{\dd}(Q)}^{\vir})[-i].
\end{equation}
There is a natural morphism
\begin{equation}
\label{PF_def}
\FP_i\CoHA^{(\psi)}_{Q,W,\dd}\coloneqq \HO(\Msp_{\dd}(Q),{}^{\fp}\tau^{\leq i}\JH_*\phip{\Tr(W)}\BQ_{\Mst_{\dd}(Q)}^{\vir})\rightarrow \CoHA^{(\psi)}_{Q,W,\dd}
\end{equation}
induced by the natural transformation ${}^{\fp}\tau^{\leq i}\rightarrow \id$ and it follows from the splitting \eqref{splitting} that \eqref{PF_def} is injective, and the subspaces $\FP_i\CoHA^{(\psi)}_{Q,W,\dd}$ provide a filtration of $\CoHA^{(\psi)}_{Q,W,\dd}$, called the \textit{perverse filtration}.  
\smallbreak
By \cite[Sec.5.3]{QEAs} the multiplication on $\CoHA^{(\psi)}_{Q,W}$ respects the perverse filtration.  There is vanishing
\begin{equation}
\label{Pvanishing}
\FP_0\!\CoHA^{(\psi)}_{Q,W}=0,
\end{equation}
and furthermore the subspace 
\[
\fg_{Q,W}\coloneqq \FP_1\!\CoHA^{\psi}_{Q,W}\subset\CoHA^{\psi}_{Q,W}
\]
is preserved by the commutator Lie bracket $[-,-]$ by \cite[Cor.6.11]{QEAs} (here is where it starts to become necessary to choose the $\psi$-twisted multiplication).  There are isomorphisms
\[
\fg_{Q,W,\dd}=\HO(\Msp_{\dd}(Q),\DTS_{Q,W,\dd})[-1]
\]
where $\DTS_{Q,W,\dd}=({}^{\fp}\tau^{\leq 1}\JH_*\phip{\Tr(W)}\BQ_{\Mst_{\dd}(Q)}^{\vir})[1]$ is the \textit{BPS sheaf} defined in \cite[Thm.A]{QEAs}.  This is a perverse sheaf, with a natural lift to the category of mixed Hodge modules on $\Msp_{\dd}(Q)$, so that $\fg_{Q,W}$ carries a natural mixed Hodge structure, which is respected by the Lie algebra structure on it; we refer to \cite[Sec.2]{QEAs} for details.  We call $\fg_{Q,W}$ the \textit{BPS Lie algebra} associated to $(Q,W)$.  It inherits a cohomological grading from $\CoHA^{\psi}_{Q,W}$, and for $i\in \BZ$ we denote by $\fg^i_{Q,W}$ the summand of cohomological degree $i$.
\begin{theorem}\cite[Thm.C]{QEAs}
\label{IT}
The PBW morphism
\begin{equation}
\label{PBWiso}
\Psi\colon\Sym(\fg_{Q,W}\otimes \HO_{\BC^*_u})\rightarrow \CoHA^{\psi}_{Q,W}
\end{equation}
is an isomorphism of cohomologically graded $\BN^{Q_0}$-graded vector spaces.
\end{theorem}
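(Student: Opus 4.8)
The plan is to deduce this from a relative ``cohomological integrality'' statement over the coarse moduli space, and then pass to hypercohomology, the perverse filtration $\FP$ on $\CoHA_{Q',W}$ appearing as a shadow of a decomposition-theorem splitting on the sheaf side. Concretely, set $\Msp_{Q'}\coloneqq\bigsqcup_{\dd\in\BN^{Q'_0}}\Msp_{\dd}(Q')$, a commutative monoid under the direct sum maps $\oplus_{\dd',\dd''}\colon\Msp_{\dd'}(Q')\times\Msp_{\dd''}(Q')\to\Msp_{\dd'+\dd''}(Q')$, which are finite because $Q'$ is symmetric. First I would lift the Hall product to the sheaf level: using the Thom--Sebastiani isomorphism to write $\phip{\Tr(W)\boxplus\Tr(W)}(\BQ\boxtimes\BQ)$ as an external product of vanishing cycle sheaves, the correspondences defining $m$ descend to a product making $\CA\coloneqq\bigoplus_{\dd}\JH_*\phip{\Tr(W)}\BQ_{\Mst_{\dd}(Q'),\vir}$ into an algebra object in the symmetric monoidal category of complexes of monodromic mixed Hodge modules on $\Msp_{Q'}$ with the convolution product $\oplus_*(-\boxtimes-)$. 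The BPS sheaves $\DTS_{Q',W,\dd}$, suitably shifted and tensored with $\HO(\B\BC^*)$, generate a free symmetric algebra object in this category, and the inclusion of generators into $\CA$ extends canonically to a morphism of algebra objects $\widetilde\Psi$. Granting that $\widetilde\Psi$ is an isomorphism, the theorem follows: $\HO(\Msp_{Q'},-)$ is monoidal for these convolution products --- the vanishing-cycle K\"unneth theorem applies since every summand is supported inside $\Crit(\Tr(W))\cap\Tr(W)^{-1}(0)$ --- so it carries $\widetilde\Psi$ to a map of free symmetric algebras which, with the shifts arranged so that the hypercohomology of the generators is $\fg_{Q',W}\otimes\HO_{\BC^*_u}$ via $\fg_{Q',W,\dd}=\HO(\Msp_{\dd}(Q'),\DTS_{Q',W,\dd})[-1]$, is exactly $\Psi$.

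To show $\widetilde\Psi$ is an isomorphism I would induct on $\dd$ with respect to a total order refining the partial order in which $\dd'\leq\dd$ when $\dd-\dd'\in\BN^{Q'_0}$. Fix $\dd\ne 0$, let $U_{\dd}\subset\Msp_{\dd}(Q')$ be the open locus of semisimple modules admitting no proper subrepresentation of strictly smaller total dimension --- the ``simple'' part --- and let $Z_{\dd}$ be its closed complement. Over $Z_{\dd}$, the degree-$\dd$ part $\CA_{\dd}$ of $\CA$ is assembled by convolution from the pieces of $\CA$ in degrees $\dd'<\dd$ (using the compatibility of the Hall product with the stratification of the stacks $\Mst_{\dd',\dd''}(Q')$ by subrepresentation type), and the same holds on the free-algebra side; the inductive hypothesis then shows $\widetilde\Psi$ is an isomorphism after restriction to $Z_{\dd}$, so it remains to analyse $U_{\dd}$.

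The crux is the \emph{support lemma}: over $U_{\dd}$ one must prove $\CA_{\dd}\cong\DTS_{Q',W,\dd}\otimes\HO(\B\BC^*)$, i.e.\ that no shifted perverse summand of $\JH_*\phip{\Tr(W)}\BQ_{\Mst_{\dd}(Q'),\vir}$ beyond the BPS sheaf and its $u$-multiples can be supported on $U_{\dd}$. For this I would use the splitting \eqref{splitting}, which already furnishes the decomposition theorem for $\JH_*\phip{\Tr(W)}\BQ_{\Mst_{\dd}(Q'),\vir}$ in the vanishing-cycle setting, together with: (i) the tautological action of $\HO(\B\BC^*)=\BQ[u]$ coming from $\Det_{(i)}$, which is free over $U_{\dd}$ because the generic automorphism group there is a single $\BC^*$ and which accounts for the $\HO(\B\BC^*)$-factor; (ii) sharp dimension bounds on the supports of the other simple summands, forcing any summand meeting $U_{\dd}$ to be supported on all of $\supp(\DTS_{Q',W,\dd})$; and (iii) the identification of the remaining ``new'' summand with the BPS sheaf of \cite[Thm.A]{QEAs}, which is Hodge-theoretic and rests on the purity and integrality results for $(Q',W)$ proved in \cite{QEAs}. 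Combined with the previous paragraph this gives $\widetilde\Psi$ an isomorphism over all of $\Msp_{\dd}(Q')$, completing the induction; tracking the $\BN^{Q'_0}$-grading (visible in $\dd$) and the cohomological grading (degree by degree) throughout shows $\Psi$ is an isomorphism of cohomologically and $\BN^{Q'_0}$-graded vector spaces. I expect this support lemma to be the one genuine obstacle: the monoidal reduction, the d\'evissage over $Z_{\dd}$, and the globalisation by Thom--Sebastiani are essentially formal once it is available, and it is exactly here that the full Hodge-theoretic machinery of \cite{QEAs} --- the splitting \eqref{splitting}, the defining properties of the BPS sheaf, and the dimension and purity estimates --- is required.
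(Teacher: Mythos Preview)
The paper does not prove this theorem; it is quoted from \cite[Thm.C]{QEAs}. The paragraph following the statement only explains how the morphism $\Psi$ is \emph{constructed} (embed $\fg_{Q',W}\otimes\HO_{\BC^*_u}$ in $\CoHA_{Q',W}$, extend to the tensor algebra via the Hall product, restrict to symmetric tensors), not why it is an isomorphism. So there is no proof in this paper to compare your proposal against.

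That said, your outline is consistent with the strategy the paper invokes when it proves the $T$-equivariant analogue (Theorem~\ref{Tinj}): there the author lifts the Hall product to a relative algebra object $\JH^T_*\phip{\Tr(W)}\BQ_{\vir}$ on $\Msp^T(Q')$ with respect to the convolution monoidal structure $\boxtimes_{\oplus}$, appeals to the decomposition-theorem splitting \eqref{splitting}, and asserts that the sheaf-level PBW map
\[
\Sym_{\boxtimes}\!\bigl({}^{\fp'}\!\tau^{\leq 1}\JH^T_*\phip{\Tr(W)}\BQ_{\vir}\otimes\HO_{\BC^*_u}\bigr)\longrightarrow \JH^T_*\phip{\Tr(W)}\BQ_{\vir}
\]
is an isomorphism, reducing (via an open cover of $\B T$) to the non-equivariant statement, which is then cited back to \cite{QEAs}. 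Your plan --- lift to sheaves, prove relative integrality, then take hypercohomology --- has exactly this shape. Whether the particular d\'evissage you propose (stratify $\Msp_{\dd}(Q')$ by the simple locus $U_{\dd}$ and its complement, and prove a support lemma over $U_{\dd}$) matches the actual argument in \cite{QEAs} cannot be assessed from this paper, which treats that step as a black box; I would only caution that the key input there is usually phrased via framed/stable moduli and approximation by projective morphisms rather than a direct open--closed decomposition of $\Msp_{\dd}(Q')$.
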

The morphism $\Psi$ is constructed as follows.  Firstly, we consider the composition
\[
\Psi'\colon\HO(\B \BC^*_u,\BQ)\otimes \fg_{Q,W}\hookrightarrow \HO(\B \BC^*_u,\BQ)\otimes\CoHA^{\psi}_{Q,W}\xrightarrow{l^{\star}}\CoHA^{\psi}_{Q,W}
\]
with $l$ as defined in \eqref{ldef}.  Since the target carries an algebra structure, $\Psi'$ extends uniquely to a morphism of algebras
\[
\Phi\colon\mathrm{T}\left(\HO(\B \BC^*_u,\BQ)\otimes \fg_{Q,W}\right)\rightarrow \CoHA^{\psi}_{Q,W}
\]
from the free associative algebra generated by the domain of $\Psi'$.  Then $\Psi$ is defined to be the restriction of $\Phi$ to the subspace of (supersymmetric) tensors.  Note that the morphism $\Psi$ will typically not be a morphism of algebras; the CoHA $\CoHA^{\psi}_{Q,W}$ will in general have an interesting algebra structure, and not be a free (super)commutative algebra.

\subsection{Deformed BPS Lie algebras}
Let $Q$ be a quiver, which we continue to assume is symmetric.  Let $T$ be a torus acting on the arrows of $Q$, and let $W\in\BC Q/[\BC Q,\BC Q]_{\mathrm{vect}}$ be a $T$-invariant potential.  Recall the map $\JH^T$ from \eqref{JHT_def}.  We define\footnote{The perverse truncation functors are modified according to the conventions introduced in \S \ref{toolkit}.} the $\HO_{T}$-module
\[
\fg^T_{Q,W,\dd}\coloneqq \HO(\Msp^T_{\dd}(Q),{}^{\fp'}\!\!\tau^{\leq 1}\JH_*^T\phip{\Tr(W)}\BQ_{\Mst^T_{\dd}(Q)}^{\vir}).
\]
Again, this cohomologically graded vector space admits a natural lift to the category of cohomologically graded mixed Hodge structures, for which we refer to \cite[Sec.2]{QEAs} for details.  Applying the derived global sections functor to the natural morphism of complexes ${}^{\fp'}\!\!\tau^{\leq 1}\JH_*^T\phip{\Tr(W)}\BQ_{\Mst^T_{\dd}(Q)}^{\vir}\rightarrow \JH_*^T\phip{\Tr(W)}\BQ_{\Mst^T_{\dd}(Q)}^{\vir}$ we obtain the morphism
\[
\iota\colon \fg^T_{Q,W,\dd}\rightarrow \CoHA^{T,\psi}_{Q,W,\dd}.
\]
\begin{theorem}
\label{Tinj}
The morphism $\iota$ is injective, so that there is a natural inclusion of $\mathbb{N}^{Q_0}$-graded subspaces $\fg^T_{Q,W}\subset \CoHA_{Q,W}^{T,\psi}$.  The subspace $\fg^T_{Q,W}$ is closed under the commutator Lie bracket on $\CoHA_{Q,W}^{T,\psi}$.  Assume, furthermore, that $\fg_{Q,W}^T$ is pure, as a mixed Hodge structure.  Then there is an isomorphism of vector spaces 
\begin{equation}
\label{BPSD}
\fg^T_{Q,W}\cong \fg_{Q,W}\otimes \HO_T,
\end{equation}
and the PBW morphism
\[
\Sym_{\HO_T}(\fg_{Q,W}^T\otimes\HO_{\BC^*_u})\rightarrow \CoHA_{Q,W}^{T,\psi}
\]
constructed from the Hall algebra product on the target is an isomorphism.
\end{theorem}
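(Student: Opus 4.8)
The plan is to deduce all three assertions from the non-equivariant statements of \cite{QEAs} by descent along the natural $T$-torsors, using the purity hypothesis only at the end to force the relevant spectral sequences to degenerate.

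First I would note that the square whose vertical arrows are $\JH$ and $\JH^T$ and whose horizontal arrows are the quotient maps $\Mst_{\dd}(Q')\to\Mst^T_{\dd}(Q')$ and $\Msp_{\dd}(Q')\to\Msp^T_{\dd}(Q')$ (both $T$-torsors) is Cartesian, and that $\Tr(W)$ on $\Mst_{\dd}(Q')$ is pulled back from $\Mst^T_{\dd}(Q')$. Smooth base change then identifies $\JH^T_*\phip{\Tr(W)}\BQ_{\Mst^T_{\dd}(Q')}$ with the descent, along the $T$-torsor $\Msp_{\dd}(Q')\to\Msp^T_{\dd}(Q')$, of the complex $\JH_*\phip{\Tr(W)}\BQ_{\Mst_{\dd}(Q')}$ with its canonical $T$-equivariant structure. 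The key point is that the splitting \eqref{splitting} can be chosen $T$-equivariantly, since it is produced by a relative hard Lefschetz action built from tautological cohomology classes, which are $T$-equivariant; it therefore descends to $\Msp^T_{\dd}(Q')$. In particular ${}^{\fp'}\!\tau^{\leq 1}\JH^T_*\phip{\Tr(W)}\BQ_{\Mst^T_{\dd}(Q')}$ is a direct summand of $\JH^T_*\phip{\Tr(W)}\BQ_{\Mst^T_{\dd}(Q')}$, so $\iota$ and the equivariant analogue of \eqref{PF_def} are split injections, and we obtain the vanishing $\FP^0\!\CoHA^T_{Q',W}=0$ as in \eqref{Pvanishing}. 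For the Lie bracket I would then check that the arguments of \cite[Sec.~5.3 and Cor.~6.11]{QEAs}, showing that the Hall product respects the perverse filtration and induces a (super)commutative product on $\Gr_{\FP}$, are local on the coarse moduli space and $T$-equivariant, so they carry over to $\CoHA^T_{Q',W}$; consequently the commutator lowers perverse degree by one and $[\FP^1\!\CoHA^T_{Q',W},\FP^1\!\CoHA^T_{Q',W}]\subseteq\FP^1\!\CoHA^T_{Q',W}=\fg^T_{Q',W}$. Note that no purity hypothesis has been used up to here.

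For the structure results I would use the Leray spectral sequence of the fibration $\Msp_{\dd}(Q')\to\Msp^T_{\dd}(Q')\to\B T$. Applied to ${}^{\fp'}\!\tau^{\leq 1}\JH^T_*\phip{\Tr(W)}\BQ$, its $E_2$-page is $\HO_T\otimes\fg_{Q',W,\dd}$ and it converges, $\HO_T$-linearly and compatibly with mixed Hodge structures, to $\fg^T_{Q',W,\dd}$; if $\fg^T_{Q',W}$ is pure then a standard weight argument forces degeneration at $E_2$ and makes the extension problem split $\HO_T$-linearly (its associated graded $\HO_T\otimes\fg_{Q',W}$ is $\HO_T$-free), which yields the isomorphism \eqref{BPSD} and shows $\fg^T_{Q',W}$ is $\HO_T$-free. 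Running the same argument for the full complex $\JH^T_*\phip{\Tr(W)}\BQ$ upgrades the cohomological integrality theorem of \cite{QEAs} to an $\HO_T$-linear isomorphism $\Gr_{\FP}\CoHA^T_{Q',W}\cong\Sym_{\HO_T}(\fg^T_{Q',W}\otimes\HO_{\BC^*_u})$. Finally, the PBW morphism is constructed exactly as after Theorem \ref{IT}: one extends $\HO_{\BC^*_u}\otimes\fg^T_{Q',W}\xrightarrow{l^{\star}}\CoHA^T_{Q',W}$ to a morphism of $\HO_T$-algebras from the free $\HO_T$-algebra and restricts to (super)symmetric tensors; this morphism is filtered for $\FP^{\bullet}$, and on associated graded it becomes the identity of $\Sym_{\HO_T}(\fg^T_{Q',W}\otimes\HO_{\BC^*_u})$, hence it is an isomorphism.

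I expect the main obstacle to lie in the first step: making precise the $T$-equivariant splitting and the descent, and in particular controlling the passage between the quotient stack $\Msp^T_{\dd}(Q')$ and finite-dimensional approximations of $\B T$ that is needed to run the Leray spectral sequences and the weight bookkeeping above. Once the equivariant splitting is in hand, the remaining steps are a fairly routine transfer of the machinery of \cite{QEAs}.
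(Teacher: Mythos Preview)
Your proposal is essentially correct and lands in the same place as the paper, but the technical route differs in two places worth noting.

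For injectivity, the paper does not descend a $T$-equivariant splitting of \eqref{splitting}. Instead it observes that $\JH^T$ is itself approximated by projective morphisms in the sense of \cite[Sec.~4.1]{QEAs} (using the $T$-quotients of framed moduli spaces as the auxiliary spaces), and then reruns the decomposition-theorem argument directly for $\JH^T_*\phip{\Tr(W)}\BQ_{\vir}$. This sidesteps the issue you flagged: one never has to check that the particular hard-Lefschetz splitting of \cite{QEAs} is $T$-equivariant, or worry about the passage between $\Msp^T$ and approximations of $\B T$. Your descent argument should also work, but the paper's route is cleaner and avoids the bookkeeping you were worried about.

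For the PBW isomorphism, the paper again works relatively: it sets up the monoidal structure $\boxtimes_{\oplus}$ on complexes over $\Msp^T(Q')$ (as a monoid over $\B T$), proves that the relative PBW morphism
\[
\Sym_{\boxtimes_{\B T}}\!\bigl(p_*\,{}^{\fp'}\!\tau^{\leq 1}\JH^T_*\phip{\Tr(W)}\BQ\otimes\HO_{\BC^*_u}\bigr)\longrightarrow p_*\JH^T_*\phip{\Tr(W)}\BQ
\]
is an isomorphism by pulling back along an open cover of $\B T$ and invoking the non-equivariant Theorem~\ref{IT}, and only then takes global sections (using the purity hypothesis to get \eqref{tmthp}). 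Your approach via the perverse filtration and the identification on $\Gr_{\FP}$ is a valid alternative, but it requires you to know in advance that $\FP$ on $\CoHA^T$ is exhaustive and separated with the expected associated graded, which in the end comes down to the same local check. The advantage of the paper's formulation is that the two vanishing statements needed for closure under the bracket (vanishing of ${}^{\fp'}\!\tau^{\leq 0}$ and of the degree-two commutator map) are visibly statements about perverse sheaves on $\Msp^T(Q')$, hence local, and reduce immediately to \cite[Thm.~A, Cor.~6.11]{QEAs}.
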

\begin{proof}
The proof is a minor modification of the analogous statements from \cite{QEAs}, to which we refer for more details.  Firstly, we claim that $\JH^T$ is approximated by projective morphisms in the sense of \cite[Sec.4.1]{QEAs}.  For this the proof is as in \cite{QEAs}, using the quotient of moduli spaces of framed $\BC Q$-modules by the $T$-action as auxiliary spaces.  From this it follows as in \cite[Prop.4.4]{QEAs} that there is a splitting
\[
\HO(\Msp^T_{\dd}(Q),\JH^T_*\phip{\Tr(W)}\BQ_{\Mst^T_{\dd}(Q)}^{\vir})\cong \bigoplus_{i\in \BZ}\HO(\Msp^T_{\dd}(Q),{}^{\fp}\!\Ho^i\!\left(\JH^T_*\phip{\Tr(W)}\BQ_{\Mst^T_{\dd}(Q)}^{\vir}\right)[-i])
\]
and the morphism
\[
\HO(\Msp^T_{\dd}(Q),{}^{\fp'}\!\!\tau^{\leq 1}\JH^T_*\phip{\Tr(W)}\BQ_{\Mst^T_{\dd}(Q)}^{\vir})\rightarrow \HO(\Msp^T_{\dd}(Q),\JH^T_*\phip{\Tr(W)}\BQ_{\Mst^T_{\dd}(Q)}^{\vir})
\]
has a left inverse, and the injectivity part of the theorem follows.  
\smallbreak
The monoid morphism $\Msp(Q)\times \Msp(Q)\rightarrow \Msp(Q)$ taking a pair of semisimple $\BC Q$-modules to their direct sum is $T$-equivariant, so that we can define the monoid structure 
\begin{equation}
\label{oplusdef}
\oplus\colon \Msp^T(Q)\times_{\B T}\Msp^T(Q)\rightarrow \Msp^T(Q)
\end{equation}
by passing to the stack-theoretic quotient by the $T$-action.  Given a pair of complexes of sheaves $\CF$ and $\CG$ on two stacks $\FX$ and $\FY$ over a stack $\FZ$ we define the external product, a complex of sheaves on $\FX\times_{\FZ}\FY$ in the usual way:
\[
\CF\boxtimes_{\FZ}\CG\coloneqq \pi_{\FX}^*\CF\otimes \pi_{\FY}^*\CG.
\]
This defines a symmetric tensor product on complexes of sheaves on $\Msp^T(Q)$: we define
\[
\CF\boxtimes_{\oplus}\CG=\oplus_*(\CF\boxtimes_{\B T}\CG).
\]
Let $p\colon \Msp^T(Q)\rightarrow \B T$ be the canonical map.  This is a morphism of monoid objects over $\B T$, where $\B T$ is considered as a monoid over itself via the identity morphism.  For arbitrary complexes of sheaves on $\Msp^T(Q)$ it is too much to hope that there is an isomorphism 
\begin{equation}
\label{tmthp}
\HO(\Msp^T(Q),\CF\boxtimes_{\oplus}\CG)\cong \HO(\Msp^T(Q),\CF)\otimes_{\HO_T}\HO(\Msp^T(Q),\CG).
\end{equation}
However, let us assume that the complexes $p_*\CF$ and $p_*\CG$ lift to pure complexes of mixed Hodge modules.  Then via the usual spectral sequence argument (see \cite[Sec.9]{preproj}), there are isomorphisms 
\begin{align*}
\HO(\Msp^T(Q),\CF)\cong &\HO(\Msp(Q),\overline{\CF})\otimes \HO_T\\
\HO(\Msp^T(Q),\CG)\cong &\HO(\Msp(Q),\overline{\CG})\otimes \HO_T\\
\HO(\Msp^T(Q),\CF\boxtimes_{\oplus}\CG)\cong &(\HO(\Msp(Q),\overline{\CF})\otimes \HO(\Msp(Q),\overline{\CG}))\otimes \HO_T
\end{align*}
where $\overline{\CF}$ and $\overline{\CG}$ denote the inverse images of $\CF$ and $\CG$ along the canonical morphism $\Msp(Q)\rightarrow \Msp^T(Q)$, and a \textit{natural} isomorphism \eqref{tmthp}.
\smallbreak
Via the same morphisms as in \cite[Sec.5.1]{QEAs} (see also \S \ref{CoHAsec}), the complex $\JH_*^T\phip{\Tr(W)}\BQ_{\Mst^T(Q)}^{\vir}$ carries a relative Hall algebra structure, i.e. for $\dd=\dd'+\dd''$ there is a morphism
\[
\JH^T_*\phip{\Tr(W)}\BQ_{\Mst^T_{\dd'}(Q)}^{\vir}\boxtimes_{\oplus}\JH^T_*\phip{\Tr(W)}\BQ_{\Mst^T_{\dd''}(Q)}^{\vir}\rightarrow \JH^T_*\phip{\Tr(W)}\BQ_{\Mst^T_{\dd}(Q)}^{\vir}
\]
satisfying the obvious associativity condition.  We claim that
\begin{itemize}
\item
The perverse truncation ${}^{\fp'}\!\!\tau^{\leq 0}\JH^T_*\phip{\Tr(W)}\BQ_{\Mst^T_{\dd}(Q)}^{\vir}$ vanishes.
\item
The morphism 
\[
{}^{\fp'}\!\!\tau^{\leq 1}\JH^T_*\phip{\Tr(W)}\BQ_{\Mst^T_{\dd}(Q)}^{\vir}[1]\boxtimes_{\oplus}{}^{\fp'}\!\!\tau^{\leq 1}\JH^T_*\phip{\Tr(W)}\BQ_{\Mst^T_{\dd}(Q)}^{\vir}[1]\rightarrow {}^{\fp'}\!\!\Ho^2\!\left(\JH^T_*\phip{\Tr(W)}\BQ_{\Mst^T_{\dd}(Q)}^{\vir}\right)
\]
given by restricting the commutator bracket on the relative CoHA $\JH^T_*\phip{\Tr(W)}\BQ_{\Mst^T_{\dd}(Q)}^{\vir}$ (with sign twisted by $\psi$) vanishes.
\end{itemize}
Taking derived global sections, these two claims prove that $\fg^T_{Q,W}$ is closed under the commutator Lie bracket.  The first claim concerns the vanishing of certain perverse sheaves on $\Msp^T(Q)$, and the second concerns the vanishing of certain morphisms of perverse sheaves on $\Msp^T(Q)$.  It follows from \cite[Sec.4]{BBD} that the forgetful functor $\Perv(\Msp^T(Q))\rightarrow \Perv(\Msp(Q))$ is faithful, so both these claims reduce to the analogous claims on $\Msp(Q)$, which are proved as \cite[Thm.A]{QEAs}, \cite[Cor.6.11]{QEAs} respectively.
\smallbreak
The isomorphism \eqref{BPSD} follows from the usual spectral sequence argument (as in \cite[Sec.9.2]{preproj}).  Since $p$ is a morphism of monoids over $\B T$, the complex $p_*\JH^T_*\phip{\Tr(W)}\BQ_{\Mst^T(Q)}^{\vir}$ inherits a ($\psi$-twisted) Hall algebra structure, along with an action of $\HO_{\BC^*_u}$.  We claim that the morphism of complexes on $\B T$
\[
\Sym_{\boxtimes_{\B T}}\left(p_*{}^{\fp'}\!\!\tau^{\leq 1}\JH^T_*\phip{\Tr(W)}\BQ_{\Mst^T(Q)}^{\vir}\otimes\HO_{\BC^*_u}\right)\rightarrow p_*\JH^T\phip{\Tr(W)}\BQ_{\Mst^T(Q)}^{\vir}
\]
is an isomorphism.  This may be checked one perverse degree at a time; then using the fact that the forgetful morphism $\Perv^T(\pt)\rightarrow \Perv(\pt)$ is faithful, the statement reduces to the non-equivariant PBW theorem (Theorem \ref{IT}).  The equivariant PBW theorem then follows.
\end{proof}
We refer to $\fg^T_{Q,W}$ as the \textit{deformed} BPS Lie algebra.  By the above theorem, as long as $\fg_{Q,W}$ is pure, this deformed Lie algebra provides a flat family of Lie algebras over $\mathfrak{t}\cong \mathrm{\Spec}(\HO_T)$ specialising to the original BPS Lie algebra $\fg_{Q,W}$ at the origin.

\subsection{Kac polynomials and Kac--Moody Lie algebras}
\label{Kac_sec}
Given a quiver $Q$ without loops we denote by $\fn^-_Q$ the negative half of the associated Kac--Moody Lie algebra.  This is the free $\BN^{Q_0}$-graded Lie algebra with generators $F_i$ in $\mathbb{N}^{Q_0}$-degree $1_i$, subject to the Serre relations
\[
[F_i,-]^{a_{ij}+1}(F_j)=0
\]
for $i\neq j$.  Here $a_{ij}$ denotes the number of edges joining $i$ and $j$ in the graph obtained from $Q$ by forgetting orientations of edges.  If the underlying graph of $Q$ is a type ADE Dynkin diagram, $\fn^-_Q$ is (one half of) the associated simple Lie algebra.%  If the underlying graph is an extended type ADE Dynkin diagram, $\fn^-_Q$ is the (one half of) affinized Lie algebra.  In this case we let $\delta\in \BN^{Q_0}$ be the dimension vector of the primitive imaginary root.  For instance in type A we have $\delta=(1,\ldots,1)$.
\smallbreak
Given a quiver $Q$ and a field $K$, a $KQ$-module $\rho$ is called indecomposable if it cannot be written as $\rho=\rho'\oplus\rho''$ with $\rho',\rho''\neq 0$.  It is called absolutely indecomposable if $\rho\otimes_{K}\overline{K}$ is an indecomposable $\overline{K}Q$-module.  For $q=p^n$ a prime power, let $\kac_{Q,\dd}(q)$ denote the number of isomorphism classes of absolutely indecomposable $\dd$-dimensional $\BF_q Q$-modules.  Victor Kac showed \cite{Kac83} that $\kac_{Q,\dd}(q)$ is a polynomial in $q$, and moreover an element of $\BZ[q]$.
\smallbreak
If $Q$ is an orientation of a finite ADE graph, then 
\begin{equation}
\label{Kac_ADE}
\kac_{Q,\dd}(q)=\dim(\fn^-_{Q,\dd}).
\end{equation}
In particular, it is constant.  Conversely, if $Q$ is any other type of quiver, $\kac_{Q,\dd}(q)$ is nonconstant for at least some values of $\dd$.
%  If $Q$ is an orientation of an extended Dynkin graph, then
%\begin{equation}
%\kac_{Q,\dd}(q)=\begin{cases}
%\dim(\fn^-_{Q})+q&\textrm{if }\dd\in \BZ_{\geq 0}\delta\\
%\dim(\fn^-_Q)&\textrm{otherwise}.
%\end{cases}
%\end{equation}

\subsection{Preprojective CoHAs}
We consider the class of special cases of quiver with potential $(Q,W)=(\tilde{Q'},\tilde{W})$ for a finite quiver $Q'$.  Although we do not assume that $Q'$ is symmetric, $Q$ obviously is.  We call these CoHAs \textit{preprojective CoHAs}, for reasons that will become evident in \S \ref{dimred_sec}.  By \cite[Thm.4.7]{preproj} there is an equality of generating series
\begin{equation}
\label{KactoDT}
\sum_{i\in \BZ}\dim(\fg_{Q,\tilde{W},\dd}^i)q^{i/2}=\kac_{Q',\dd}(q^{-1}).
\end{equation}
where $\kac_{Q',\dd}(q)$ is the Kac polynomial of $Q'$.  So in particular, $\fg_{Q,\tilde{W}}$ is concentrated in even, negative cohomological degrees.  
\begin{proposition}\cite[Thm.6.6]{preproj3}
\label{dzprop}
Let $Q'$ be a finite quiver, and denote by $Q^{\mathrm{re}}$ the quiver obtained from $Q'$ by removing all vertices that support a loop, along with any arrows beginning or ending at such a vertex.  Set $Q=\tilde{Q'}$.  There is an isomorphism of $\mathbb{N}^{Q_0}$-graded Lie algebras
\begin{equation}
\label{dzpart}
\fg_{Q,\tilde{W}}^0\cong \fn^-_{Q^{\mathrm{re}}}.
\end{equation}
\end{proposition}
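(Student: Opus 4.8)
The plan is to realise both sides as the positive part of a symmetric Kac--Moody algebra, presented by generators and Serre relations, and then match the presentations. First note that $\fg_{\tilde{Q},\tilde{W}}$ is a cohomologically $\BZ$-graded Lie algebra (the critical Hall multiplication, and hence the commutator bracket, has degree zero), so its degree-zero component $\fg^0:=\fg^0_{\tilde{Q},\tilde{W}}$ is a Lie subalgebra, still graded by dimension vector $\dd\in\BN^{Q_0}$. By \eqref{KactoDT}, $\dim_{\BQ}\fg^0_{\dd}$ equals the constant term $\kac_{Q,\dd}(0)$ of the Kac polynomial. I would then reduce the proposition to two assertions: (A) the numerical identity $\kac_{Q,\dd}(0)=\dim\fn^-_{Q^{\mathrm{re}},\dd}$ for every $\dd\in\BN^{Q_0}$, and (B) that $\fg^0$ is generated, as a Lie algebra, by its graded pieces $\fg^0_{1_i}$ at vertices $i\in Q_0$.

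For (A), distinguish whether or not $\supp(\dd)$ contains a loop-carrying vertex. If it does not, then a $\dd$-dimensional $\BC Q$-module is literally a $\BC Q^{\mathrm{re}}$-module, since every arrow deleted in passing to $Q^{\mathrm{re}}$ has a zero vector space at one of its endpoints; hence $\kac_{Q,\dd}=\kac_{Q^{\mathrm{re}},\dd}$, and since $Q^{\mathrm{re}}$ is loopless, Hausel's proof of Kac's constant-term conjecture identifies $\kac_{Q^{\mathrm{re}},\dd}(0)$ with $\dim\fn^-_{Q^{\mathrm{re}},\dd}$. If $\supp(\dd)$ does contain a loop vertex then $\fn^-_{Q^{\mathrm{re}},\dd}=0$, and one needs the vanishing $\kac_{Q,\dd}(0)=0$; this is a known feature of Kac polynomials of quivers with loops (a loop in the support forces a positive-dimensional family of indecomposables, so the polynomial has no constant term). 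Alternatively, both halves of (A) can be read off the shape of the degree-zero part of the BPS sheaf, which via dimensional reduction from $\Pi_Q$ is a direct sum of skyscraper sheaves at the finitely many rigid semisimple points, indexed by the positive roots of $Q^{\mathrm{re}}$ with multiplicity $\dim\fn^-_{Q^{\mathrm{re}},\dd}$.

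Granting (A), I construct the comparison homomorphism. For a loopless vertex $i$ one has $\kac_{Q,1_i}(0)=1$, so $\fg^0_{1_i}$ is one-dimensional; fix a nonzero element $f_i$. For a vertex $i$ carrying $g_i\geq 1$ loops, $\kac_{Q,1_i}(q)=q^{g_i}$, so $\fg^0_{1_i}=0$ and such vertices play no role. For loopless $i\neq j$, the Serre element $[f_i,-]^{a_{ij}+1}(f_j)$ lies in $\fg^0_{(a_{ij}+1)1_i+1_j}$; since $\chi_{Q^{\mathrm{re}}}\big((a_{ij}+1)1_i+1_j,\,(a_{ij}+1)1_i+1_j\big)=a_{ij}+2>1$, the dimension vector $(a_{ij}+1)1_i+1_j$ is not a root of $Q^{\mathrm{re}}$, so this graded piece of $\fg^0$ vanishes by (A) and the Serre relation is satisfied in $\fg^0$. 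Hence $F_i\mapsto f_i$ extends to a homomorphism of Lie algebras $\Theta\colon\fn^-_{Q^{\mathrm{re}}}\to\fg^0$.

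By (A), the source and target of $\Theta$ are finite-dimensional and of equal dimension in each $\BN^{Q_0}$-degree, so $\Theta$ is an isomorphism as soon as it is surjective --- which is precisely (B). I expect (B) to be the main obstacle, as it is far from formal: a priori $\fg_{\tilde{Q},\tilde{W}}$ could acquire ``cuspidal'' generators in cohomological degree zero at imaginary dimension vectors. One excludes these by an induction on $|\dd|$, using the cohomological-integrality (cuspidal) decomposition of $\CoHA_{\tilde{Q},\tilde{W}}$ together with the vanishing half of (A) to see that no new degree-zero generators arise beyond the $1_i$; alternatively, following \cite{preproj3}, one argues through the less-perverse filtration on the preprojective CoHA and nonabelian Hodge theory for stacks, which identifies the degree-zero part of $\fg_{\tilde{Q},\tilde{W}}$ with the part built from the real simple roots of $Q^{\mathrm{re}}$. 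Combining $\Theta$ with (A) and (B) yields the claimed isomorphism $\fg^0_{\tilde{Q},\tilde{W}}\cong\fn^-_{Q^{\mathrm{re}}}$ of Lie algebras.
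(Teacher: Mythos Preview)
The paper does not prove this proposition; it is quoted verbatim from \cite[Thm.6.6]{preproj3} with no argument given here. So there is no ``paper's own proof'' to compare against beyond that citation.

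Your outline is the natural one, and your reduction to (A) and (B) is correct. Part (A) is fine: the loop-free case is Hausel's theorem (extended by Hausel--Letellier--Rodriguez-Villegas to arbitrary $\dd$), and the vanishing of $\kac_{Q,\dd}(0)$ when $\supp(\dd)$ meets a loop vertex is a standard consequence of the same circle of ideas (or, as you note, can be read off the support of the degree-zero BPS sheaf).

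Part (B) is, as you say, the real content, and here there is a genuine gap in your sketch. Your first proposed mechanism---an induction on $\lvert\dd\rvert$ via the ``cuspidal decomposition'' of $\CoHA_{\tilde{Q},\tilde{W}}$ together with the vanishing half of (A)---does not work as stated. The BPS Lie algebra $\fg_{\tilde{Q},\tilde{W}}$ already \emph{is} the primitive/cuspidal part in the relevant sense (Theorem~\ref{IT} is the PBW decomposition $\CoHA\cong\Sym(\fg\otimes\HO_{\BC^*_u})$), so there is no further cuspidal filtration on $\fg$ itself to exploit. Moreover, the vanishing half of (A) only controls $\fg^0_{\dd}$ for $\dd$ whose support meets a loop vertex; it says nothing about potential extra degree-zero generators at imaginary roots of $Q^{\mathrm{re}}$, which is exactly where generation could fail.

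Your second alternative---following \cite{preproj3} via the less-perverse filtration and the identification of the degree-zero BPS cohomology with intersection cohomology concentrated at rigid semisimple points---is the correct route, and is what the cited reference does. The upshot is that your write-up, like the present paper, ultimately defers the substance of (B) to \cite{preproj3}; the argument you give before that point is a correct reduction, but not an independent proof.
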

The Lie algebra on the right hand side a priori carries a $\BN^{Q^{\mathrm{re}}_0}$-grading.  We promote this to a $\mathbb{N}^{Q_0}$-grading via extension by zero along the embedding $\BN^{Q^{\mathrm{re}}_0}\subset\mathbb{N}^{Q_0}$ induced by the inclusion $Q^{\mathrm{re}}_0\subset Q_0$. 

Choose a torus $T=(\BC^*)^l$, and a weighting function $\tau\colon Q_1\rightarrow \BZ^l$ defining an action of $T$ on the category of $\BC Q$-modules, for which $\tilde{W}$ is invariant.  The following theorem is a consequence of the \textit{purity} of the natural mixed Hodge structure on $\CoHA_{Q,\tilde{W}}$, see \cite{preproj} for details.
\begin{proposition}\cite[Thm.9.6]{preproj}
\label{pur_form}
Let $l\colon \BZ^r\rightarrow \BZ^{r'}$ be a split surjection of lattices, with complement $\BZ^{r''}$, inducing the split inclusion of tori
\[
T'=\Hom_{\mathrm{Grp}}(\BZ^{r'},\BC^*)\hookrightarrow T=\Hom_{\mathrm{Grp}}(\BZ^{r},\BC^*)
\]
with complement $T''=\Hom_{\mathrm{Grp}}(\BZ^{r''},\BC^*)$.  There is an isomorphism of $\BN^{Q_0}$-graded, cohomologically graded $\HO_T$-modules\footnote{We omit the $\psi$ superscript because the $\psi$-twist does not feature in the $\HO_T$-module structure.}
\begin{equation}
\label{diso}
\CoHA^T_{Q,\tilde{W}}\cong \CoHA^{T'}_{Q,\tilde{W}}\otimes\HO_{T''}
\end{equation}
and an isomorphism of algebras
\begin{equation}
\label{algeiso}
\CoHA^{T',(\psi)}_{Q,\tilde{W}}\cong \CoHA^{T,(\psi)}_{Q,\tilde{W}}\otimes_{\HO_T}\HO_{T'}.
\end{equation}
\end{proposition}
For the special case $r'=0$, the above proposition yields an isomorphism of $\HO_T$-modules
\begin{equation}
\label{TZ}
\CoHA^{T}_{Q,\tilde{W}}\cong \CoHA_{Q,\tilde{W}}\otimes\HO_{T}
\end{equation}
and an isomorphism of algebras
\[
\CoHA^{(\psi)}_{Q,\tilde{W}}\cong \CoHA^{T,(\psi)}_{Q,\tilde{W}}/\fm_T\cdot \CoHA^{T,(\psi)}_{Q,\tilde{W}},
\]
where we denote by $\fm_T$ the maximal homogeneous ideal in $\HO_T$.   We will be particularly interested in the action of the torus $T=(\BC^*)^2=\Hom_{\mathrm{Grp}}(\BZ^2,\BC^*)$ acting via the weighting function
\begin{align}
\label{ttaudef}
\tilde{\tau}\colon &Q_1\rightarrow \BZ^2\\
&a\mapsto (1,0);\quad\quad
a^*\mapsto (0,1);\quad\quad
\omega_i\mapsto (-1,-1).\nonumber
\end{align}

\subsection{Dimensional reduction}
\label{dimred_sec}
We recall from \cite[Appendix.A]{Da13} the following result.
\begin{theorem}\cite[Thm.A.1]{Da13}
Let $X=Y\times \BA^n$ be a smooth variety, let $f$ be a function on $X$ of weight one for the scaling action on $\BA^n$, so that we can write 
\[
f=\sum_{i=1}^nx_if_i
\]
for $x_1,\ldots,x_n$ a system of coordinates on $\BA^n$ and $f_1,\ldots,f_n$ functions on $Y$.  Set $Z=Z(f_1,\ldots,f_n)$, let $i\colon Z\hookrightarrow Y$ be the inclusion, and let $\pi\colon X\rightarrow Y$ be the projection.  Then there is a natural isomorphism
\[
i_*i^!\CF\rightarrow \pi_*\phip{f}\pi^*\CF.
\]
\end{theorem}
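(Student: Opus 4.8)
The plan is to reduce to the case $n=1$ and then to analyse the vanishing cycle sheaf $\phip{f}\pi^*\CF$ directly, separately over $Y\setminus Z$ and near $Z\times\BA^1$. Both sides of the desired morphism commute with restriction to Zariski opens of $Y$ (for the right-hand side since $\pi$ is an affine bundle and $\phip{}$ commutes with open restriction) and with base change along smooth maps, so one may work locally on $Y$. To lower $n$, write $\BA^n=\BA^{n-1}\times\BA^1$, put $Y'=Y\times\BA^{n-1}$, and note that $f=x_nf_n+g$ with $g=\sum_{i<n}x_if_i$ pulled back from $Y'$; the (slightly generalised) $n=1$ statement applied over $Y'$ then replaces $(Y',f)$ by $(Z(f_n),\,g|_{Z(f_n)})$, and iterating $n$ times peels off all the $x_i$, the residual summand becoming $0$. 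Thus it suffices to treat $f=x_1f_1+h$ on $Y\times\BA^1$ with $f_1,h$ functions on $Y$; I will describe the case $h=0$, the general case being identical.

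So let $f=x_1f_1$ on $X=Y\times\BA^1$. Over $Y\setminus Z$ the substitution $x_1\mapsto x_1f_1$ is a local change of fibre coordinate (as $f_1$ is invertible there), so $f$ is locally a submersion and $\phip{f}\pi^*\CF$ vanishes there. Writing $k\colon Z\times\BA^1\hookrightarrow X$ for the closed embedding, this gives $\phip{f}\pi^*\CF\cong k_*k^*\phip{f}\pi^*\CF$, hence $\pi_*\phip{f}\pi^*\CF\cong (\pi_Z)_*k^*\phip{f}\pi^*\CF$ with $\pi_Z\colon Z\times\BA^1\to Z$, and it remains to identify $k^*\phip{f}\pi^*\CF$ together with this direct image.

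The heart of the matter is the local structure of $\phip{f}\pi^*\CF$ along $Z\times\BA^1$. Here one uses the $\Gm$-action scaling $x_1$ with weight $1$, under which $f$ has weight $1$, so that $\phip{f}\pi^*\CF$ is monodromic in the $x_1$-direction; I would compute its stalks via the evident fibration of the Milnor fibre of $x_1f_1$ — solving the affine-linear equation in $x_1$, whose coefficient $f_1$ is invertible precisely off $Z$ — finding that at a point $(y_0,0)\in Z\times\{0\}$ the Milnor fibre retracts onto a small punctured-along-$Z$ neighbourhood of $y_0$ in $Y$, so the stalk there is that of $i_*i^!\CF$, while at $(y_0,a)$ with $a\neq0$ the stalk is that of the ordinary vanishing cycle $\phip{f_1}\CF$ along $Y$. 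Gluing these via the monodromic structure and then applying $(\pi_Z)_*$, i.e. $R\Gamma(\BA^1_{x_1},-)$ along the orbit direction, collapses the $\phip{f_1}$-contribution, and the canonical/variation triangles relating $i^!$, $\phip{f_1}$ and $\psi_{f_1}$ leave exactly $i_*i^!\CF$; the shift $[2\dim(\pi)]$ accounts for the $\BA^1$-fibre together with the Tate-twisted shift in the $\phip{}$ convention. An alternative I would also pursue is to embed $X$, via the graph of $f_1$, into $Y\times\BA^1_t\times\BA^1_{x_1}$, on which $\widetilde f=tx_1$ is the pull-back along a smooth map of a nondegenerate rank-two quadratic form, whose vanishing cycle is a rank-one skyscraper supported on $Y\times\{0\}$; transporting this back along the graph embedding (using proper base change for the Cartesian square identifying the preimage of $Y\times\{0\}$ with $Z\times\{0\}$) gives the same answer, cleanly at least when $Z$ is smooth.

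I expect the main obstacle to be exactly this core computation. Away from the smooth locus of $Z$, the functor $\phip{}$ does not commute with the closed restriction $k^*$, so the concentration of $\phip{f}\pi^*\CF$ onto the subvariety $Z\times\{0\}$ is a genuine phenomenon that must be extracted either from a microlocal (non-characteristic) analysis over $Z_{\mathrm{sm}}$ combined with dévissage, or from the explicit quadratic-form model above; carrying this out uniformly in $\CF$, and bookkeeping all shifts and Tate twists correctly, is where the real work lies. Once the isomorphism $k^*\phip{f}\pi^*\CF\cong(Z\times\{0\}\hookrightarrow Z\times\BA^1)_*\,i^!\CF[\,\text{shift}\,]$ is in hand, pushing forward along $\pi_Z$ yields $i_*i^!\CF$, and naturality of the resulting isomorphism is inherited step by step from the naturality of each construction used.
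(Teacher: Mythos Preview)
The paper does not prove this theorem: it is quoted verbatim from \cite[Thm.~A.1]{Da13} and used as a black box to set up the dimensional reduction isomorphism, so there is no argument in the present paper to compare your proposal against.

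For what it is worth, your strategy --- reduce to $n=1$ by peeling off one linear coordinate at a time, observe that $\phip{f}\pi^*\CF$ vanishes over $Y\setminus Z$ because $f$ is a submersion there, and then analyse the sheaf along $Z\times\BA^1$ either via an explicit Milnor-fibre description or via the graph embedding into the model $tx_1$ --- is a reasonable outline and is broadly in the spirit of how such dimensional reduction statements are proved in the literature. You are also right to flag the core local computation (the identification along $Z\times\BA^1$ when $Z$ is singular) as the place where the genuine work sits; your sketch there is suggestive rather than a proof, but that is consistent with your own disclaimer.
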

Applying the result to $\BD\BQ_{Y}$ and passing to global sections yields the isomorphism
\[
\HO^{\BoMo}(Z,\BQ)\rightarrow \HO(X,\phip{f}\BQ)[2\dim(Y)].
\]
The theorem extends in the obvious way to affine fibrations over stacks \cite[Cor.A.9]{Da13}.  In particular, for a quiver $Q$ (not assumed symmetric) and the affine fibration
\[
\Mst_{\dd}(\tilde{Q})\rightarrow \Mst_{\dd}(\overline{Q})
\]
the function $\Tr(\tilde{W})$ has weight one with respect to the action that scales the fibres (i.e. the space of choices of matrices for each of the loops $\omega_i$).  As such, we obtain an isomorphism
\[
\Psi_{\dd}\colon \HO^{\BoMo}(\Mst^T_{\dd}(\Pi_Q),\BQ)[-2\chi_Q(\dd,\dd)]\cong\HO(\Mst^T_{\dd}(\tilde{Q}),\phip{\Tr(\tilde{W})}).
\]
We slightly twist this isomorphism\footnote{See \cite[Lem.4.1]{RS17} for the origin of this sign.} by setting $\Psi'_{\dd}=(-1)^{\binom{\lvert\dd\lvert}{2}}\Psi_{\dd}$.  We use these sign-twisted isomorphisms, and the algebra structure on $\CoHA^{T,\chi}_{\tilde{Q},\tilde{W}}$, to induce an algebra structure on
\[
\CoHA^T_{\Pi_Q}\coloneqq \bigoplus_{\dd\in\BN^{Q_0}}\HO^{\BoMo}(\Mst^T_{\dd}(\Pi_Q),\BQ^{\vir}).
\]
Here, in the definition of the sign twist in the definition of $\CoHA^{T,\chi}_{\tilde{Q},\tilde{W}}$, we have chosen $\psi=\chi=\chi_Q$, noting that
\begin{align*}
\chi_Q(\dd',\dd'')+\chi_Q(\dd'',\dd')=\chi_{\tilde{Q}}(\dd',\dd')\chi_{\tilde{Q}}(\dd'',\dd'')+\chi_{\tilde{Q}}(\dd',\dd'')& \quad\textrm{mod }2.
\end{align*}
Recall that we have defined the complex $\BQ^{\vir}$ on the quotient of a smooth stack by a $T$-action to be the constant sheaf, shifted on each component of the stack by the dimension of the stack.  Since $\Mst_{\dd}(\Pi_Q)$ is not smooth, we need a new convention to make sense of the above definition of $\CoHA^T_{\Pi_Q}$: on $\Mst_{\dd}^{T}(\Pi_Q)$ we define $\BQ^{\vir}\coloneqq \BQ[-2\chi(\dd,\dd)]$.  
\smallbreak
The algebra structure on $\CoHA^T_{\Pi_Q}$ induced by the algebra structure on $\CoHA^{T,\chi}_{\tilde{Q},\tilde{W}}$ and the isomorphism $\Psi'$ is the same as the one defined by Schiffmann and Vasserot in \cite{ScVa13}, see \cite[Appendix]{RS17}, \cite{YZ16} for proofs.  Our algebra structure is thus isomorphic to theirs, with the added sign twist provided by $\chi_Q$.

\subsection{Affinizing BPS Lie algebras}
\label{CopSec}
For pairs $(\tilde{Q},\tilde{W})$ of a tripled quiver with potential there is extra structure on the CoHA $\CoHA^{\chi}_{\tilde{Q},\tilde{W}}$, ultimately derived from a factorization structure on the stack of representations of the Jacobi algebra.  We briefly describe this here; full details for more general 3-Calabi--Yau completions of 2-Calabi--Yau categories will appear in joint work with Hennecart, Kinjo, Schiffmann and Vasserot.  In the special case in which $Q$ is the one-loop quiver, i.e. the case we need for the main results of this paper, this structure can be extracted from \cite{KV19} via dimensional reduction.
\smallbreak
Under the isomorphism $j$ from \eqref{TJA} the element $j(\omega)$ is central in $\Jac(\tilde{Q},\tilde{W})$, so that every $\Jac(\tilde{Q},\tilde{W})$-module $\rho$ admits a canonical decomposition as a $\Jac(\tilde{Q},\tilde{W})$-module
\[
\rho\cong \bigoplus_{\lambda\in\BC}\rho_{\lambda}
\]
where $j(\omega)$ acts on $\rho_{\lambda}$ with unique generalised eigenvalue $\lambda$.  From this one deduces (see \cite[Lem.4.1]{preproj}) that for $U\subset \BC$ an open ball, if one denotes by $\Mst_{\dd}^U(\tilde{Q})$ the open substack satisfying the condition that each $\omega_i$ acts with generalised eigenvalues inside $U$, the restriction
\[
\HO(\Mst_{\dd}(\tilde{Q}),\phip{\Tr(\tilde{W})})\rightarrow \HO(\Mst^U_{\dd}(\tilde{Q}),\phip{\Tr(\tilde{W})})
\]
is an isomorphism.  Let $U'$ and $U''$ be disjoint open balls in $\BC$, then via the Thom--Sebastiani isomorphism one constructs the composition of morphisms
\begin{align*}
\Delta\colon \HO(\Mst(\tilde{Q}),\phip{\Tr(\tilde{W})}\BQ^{\vir})\rightarrow &\HO(\Mst^{U'\coprod U''}(\tilde{Q}),\phip{\Tr(\tilde{W})}\BQ^{\vir})\\&\cong\HO(\Mst^{U'}(\tilde{Q}),\phip{\Tr(\tilde{W})}\BQ^{\vir})\otimes \HO(\Mst^{U''}(\tilde{Q}),\phip{\Tr(\tilde{W})}\BQ^{\vir})\\&\cong \CoHA^{\chi}_{\tilde{Q},\tilde{W}}\otimes\CoHA^{\chi}_{\tilde{Q},\tilde{W}}
\end{align*}
defining a coproduct on $\CoHA^{\chi}_{\tilde{Q},\tilde{W}}$.  The second isomorphism is not quite canonical: there is a choice of sign, with the geometric origin explained in \cite[Lem.4.1]{RS17}.  It is almost cocommutative since the space of choices of $U'$ and $U''$ is connected, so that we can continuously swap them.  Because of the signs appearing in dimensional reduction, in order for this coproduct to be actually cocommutative, we again multiply the component $\Delta_{\dd',\dd''}$ by $(-1)^{\chi_Q(\dd',\dd'')}$.  The resulting coproduct is compatible with $m^{\TW}$, since $m^{\TW}$ lifts to an algebra structure on the object 
\[
(\Mst(\tilde{Q})\xrightarrow{\lambda} \Sym(\BA^1))_*\phip{\Tr(\tilde{W})}\BQ^{\vir}
\]
where $\lambda$ is the morphism recording the generalised eigenvalues of $j(\omega)$.  It follows by the Milnor--Moore theorem that $\CoHA_{\tilde{Q},\tilde{W}}^{\chi}$ is a universal enveloping algebra.  Furthermore, by the support lemma \cite[Lem.4.1]{preproj} $\lambda_*\DTS_{\tilde{Q},\tilde{W}}\otimes\HO_{\BC^*_u}$ is supported on the subspace of $\Msp(\tilde{Q})$ for which all of the generalised eigenvalues of $j(\omega)$ are identical, and so 
\[
\hat{\fg}_{\tilde{Q},\tilde{W}}\coloneqq \fg_{\tilde{Q},\tilde{W}}\otimes\HO_{\BC^*_u}\subset \CoHA^{\chi}_{\tilde{Q},\tilde{W}}
\]
is primitive for the coproduct $\Delta$ (by its construction), i.e. for $\alpha\in\hat{\fg}_{\tilde{Q},\tilde{W}}$, $\Delta(\alpha)=1\otimes \alpha+\alpha\otimes 1$.  By Theorem \ref{IT} and the PBW theorem for $\CoHA^{\chi}_{\tilde{Q},\tilde{W}}$ the subspace $\hat{\fg}_{\tilde{Q},\tilde{W}}$ has the same graded dimensions as the space of primitive elements for $\CoHA^{\chi}_{\tilde{Q},\tilde{W}}$.  Since all elements of $\hat{\fg}_{\tilde{Q},\tilde{W}}$ are primitive, we find
\begin{proposition}
\label{UEA_prop}
The subspace $\hat{\fg}_{\tilde{Q},\tilde{W}}\subset \CoHA^{\chi}_{\tilde{Q},\tilde{W}}$ is closed under the commutator Lie bracket, and $\CoHA^{\chi}_{\tilde{Q},\tilde{W}}=\UEA(\hat{\fg}_{\tilde{Q},\tilde{W}})$.
\end{proposition}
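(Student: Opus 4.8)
By the discussion immediately preceding the statement, the proposition has essentially been reduced to an application of the Milnor--Moore theorem together with a dimension count, and the plan is to carry this out carefully. First I would verify that $\CoHA_{\tilde{Q},\tilde{W}}$, equipped with the Hall product $m$ and the factorization coproduct $\Delta$, is a connected, cocommutative, $\BN^{Q_0}$-graded bialgebra object in the category of cohomologically graded $\HO_{\BC^*_u}$-modules. Coassociativity and cocommutativity of $\Delta$ come from the connectedness of the space of configurations of two (resp.\ three) pairwise disjoint open balls in $\BC$; compatibility of $\Delta$ with $m$ is the statement that both are induced by the relative object $(\Mst(\tilde{Q})\xrightarrow{\lambda}\Sym(\BA^1))_*\phip{\Tr(\tilde{W})}\BQ_{\vir}$, with $m$ the relative Hall product and $\Delta$ coming from restriction to the open locus on which a single point splits off the eigenvalue divisor; the (co)unit is supplied by the one-dimensional piece in $\BN^{Q_0}$-degree $0$. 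As the $\BN^{Q_0}$-grading is connected, this bialgebra is automatically a Hopf algebra.

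Next I would invoke the graded Milnor--Moore theorem over the field $\BQ$: a connected, cocommutative, graded Hopf algebra is the universal enveloping algebra of its Lie algebra $P$ of primitive elements, and $P$ is closed under the commutator bracket. The cohomological grading and the $\HO_{\BC^*_u}$-action are merely carried along. This already yields $\CoHA_{\tilde{Q},\tilde{W}}=\UEA(P)$ with $P$ a Lie subalgebra of $\CoHA_{\tilde{Q},\tilde{W}}$.

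Then I would show $\hat{\fg}_{\tilde{Q},\tilde{W}}\subseteq P$. By the support lemma \cite[Lem.4.1]{preproj}, $\lambda_*\DTS_{\tilde{Q},\tilde{W}}\otimes\HO_{\BC^*_u}$ is supported over the small diagonal $\BA^1\hookrightarrow\Sym(\BA^1)$ inside the relevant locus of $\Msp(\tilde{Q})$, so for any $\alpha\in\hat{\fg}_{\tilde{Q},\tilde{W}}=\fg_{\tilde{Q},\tilde{W}}\otimes\HO_{\BC^*_u}$ the element $\Delta(\alpha)$, computed by restricting to $U'\coprod U''$ with $U',U''$ disjoint balls and applying Thom--Sebastiani, can only be supported in the two extreme summands; hence $\Delta(\alpha)=1\otimes\alpha+\alpha\otimes 1$, i.e.\ $\alpha\in P$. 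In particular $\hat{\fg}_{\tilde{Q},\tilde{W}}$ is already closed under the commutator bracket.

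Finally I would match bigraded dimensions. The ordinary PBW theorem gives $\UEA(P)\cong\Sym(P)$ as $\BN^{Q_0}$-graded, cohomologically graded vector spaces, so $\CoHA_{\tilde{Q},\tilde{W}}\cong\Sym(P)$; Theorem \ref{IT} gives $\CoHA_{\tilde{Q},\tilde{W}}\cong\Sym(\fg_{\tilde{Q},\tilde{W}}\otimes\HO_{\BC^*_u})=\Sym(\hat{\fg}_{\tilde{Q},\tilde{W}})$. Comparing the bigraded dimensions of $\Sym(P)$ and $\Sym(\hat{\fg}_{\tilde{Q},\tilde{W}})$ in the light of the inclusion $\hat{\fg}_{\tilde{Q},\tilde{W}}\subseteq P$ forces $\hat{\fg}_{\tilde{Q},\tilde{W}}=P$ in each bidegree, and the proposition follows. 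The step I expect to be the main obstacle is the first one: writing down the compatibility of $\Delta$ and $m$ as a genuine equality of morphisms assembled from Thom--Sebastiani isomorphisms, the restriction maps to the eigenvalue-separated open substacks, and the correspondence defining $m$, all coherently over $\Sym(\BA^1)$; once that, together with the connectedness inputs needed for cocommutativity and the Hopf property, is established, the remaining steps reduce to a citation (the support lemma) and a routine PBW dimension count.
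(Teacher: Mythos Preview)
Your proposal is correct and follows essentially the same route as the paper: establish the cocommutative Hopf algebra structure via the factorization coproduct, apply Milnor--Moore to get $\CoHA_{\tilde{Q},\tilde{W}}=\UEA(P)$, use the support lemma to show $\hat{\fg}_{\tilde{Q},\tilde{W}}\subseteq P$, and then use Theorem~\ref{IT} together with PBW to compare graded dimensions and conclude $\hat{\fg}_{\tilde{Q},\tilde{W}}=P$. Your identification of the compatibility of $m$ and $\Delta$ as the point requiring the most care is also in line with the paper, which defers the full details of this factorization structure to forthcoming joint work and to \cite{KV19} in the one-loop case.
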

\begin{definition}
We call $\hat{\fg}_{\tilde{Q},\tilde{W}}$ the \textit{affine BPS Lie algebra} for $(\tilde{Q},\tilde{W})$.
\end{definition}
\begin{remark}
So far there are no examples in the literature of quivers with potential $\CoHA^{\psi}_{Q',W}$ for which $\fg_{Q',W}\otimes\HO_{\BC^*}$ has been shown \textit{not} to be closed under the commutator Lie bracket.  It would be interesting to know what class of BPS Lie algebras can be affinized this way, and in particular whether the ``partially fermionized'' cousins \cite{Da22BF} of the BPS Lie algebras $\fg_{\tilde{Q},\tilde{W}}$ belong to this class.
\end{remark}
Let $\BC^*$ act on $\BC \tilde{Q}$-modules via the weighting function $\tau\colon \tilde{Q}_1\rightarrow \mathbb{Z}$ which sends $a$ to $1$ and $a^*$ to $-1$ for all $a\in Q_1$, and $\omega_i$ to $0$ for all $i\in Q_0$.  Then the morphism $\lambda$ is $\BC^*$-invariant, so that we can repeat the construction of the commutative coproduct above for the CoHA
\[
\CoHA^{\BC^*,\chi}_{\tilde{Q},\tilde{W}},
\]
considered as an algebra object in the category of $\HO_{\mathbb{C}^*}$-modules.  
\begin{remark}
Via the symmetry that permutes the arrows $a,a^*,\omega$, the algebra $\CoHA^{\BC^*,\chi}_{Q^{(3)},\tilde{W}}$ carries a cocommutative coproduct in the same category, i.e. a morphism 
\[
\CoHA^{\BC^*}_{Q^{(3)},\tilde{W}}\rightarrow \CoHA^{\BC^*}_{Q^{(3)},\tilde{W}}\otimes_{\HO_{\mathbb{C}^*}}\CoHA^{\BC^*,\chi}_{Q^{(3)},\tilde{W}},
\]
and is a universal enveloping algebra, as long as $\BC^*$ acts on one of $a,a^*,\omega$ with weight zero.  
\end{remark}
Returning to the case of general tripled quivers, since the coproduct on $\CoHA^{\BC^*,\chi}_{\tilde{Q},\tilde{W}}$ is $\HO_{\BC^*}$-linear by construction, we obtain a $\HO_{\BC^*}$-linear Lie algebra of primitive elements $\hat{\fg}^{\BC^*}_{\tilde{Q},\tilde{W}}$:
\begin{definition}
We denote by $\hat{\fg}^{\BC^*}_{\tilde{Q},\tilde{W}}\subset \CoHA^{\BC^*,\chi}_{\tilde{Q},\tilde{W}}$ the Lie algebra of primitives, which we call the \textit{deformed affinized BPS Lie algebra} for $(\tilde{Q},\tilde{W})$.
\end{definition}
The following is a basic application of the Milnor--Moore theorem:
\begin{proposition}
\label{TPBW}
There is an isomorphism of $\HO_{\BC^*}$-algebra objects:
\begin{equation}
\label{dmm}
\UEA_{\HO_{\BC^*}}\!(\hat{\fg}^{\BC^*}_{\tilde{Q},\tilde{W}})\rightarrow \CoHA^{\BC^*,\chi}_{\tilde{Q},\tilde{W}}.
\end{equation}
\end{proposition}
The ``deformed'' in the definition is justified by the following
\begin{proposition}
\label{Tg}
There is an isomorphism of $\BN^{Q_0}$-graded, cohomologically graded $\HO_{\BC^*}$-modules.
\begin{equation}
\label{dgiso}
\hat{\fg}^{\BC^*}_{\tilde{Q},\tilde{W}}\cong \hat{\fg}_{\tilde{Q},\tilde{W}}\otimes\HO_{\BC^*}
\end{equation}
as well as an isomorphism of Lie algebras
\[
\hat{\fg}_{\tilde{Q},\tilde{W}}\cong \hat{\fg}^{\BC^*}_{\tilde{Q},\tilde{W}}/\fm_{\BC^*}\cdot \hat{\fg}^{\BC^*}_{\tilde{Q},\tilde{W}}.
\]
\end{proposition}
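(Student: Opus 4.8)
The plan is to run, over the base ring $\HO_{\BC^*}$, the same argument that identifies $\hat{\fg}_{\tilde{Q},\tilde{W}}$ with the primitives of $\CoHA_{\tilde{Q},\tilde{W}}$ in the discussion preceding Proposition \ref{UEA_prop}, combined with the deformed PBW theorem. Concretely, I will show that the $\HO_{\BC^*}$-submodule $\fg^{\BC^*}_{\tilde{Q},\tilde{W}}\otimes\HO_{\BC^*_u}\subset\CoHA^{\BC^*}_{\tilde{Q},\tilde{W}}$, where $\HO_{\BC^*_u}$ acts through the operation $\bullet$ of \S\ref{CoHAsec}, coincides with the Lie algebra $\hat{\fg}^{\BC^*}_{\tilde{Q},\tilde{W}}$ of primitives for the factorization coproduct, and then substitute the module isomorphism $\fg^{\BC^*}_{\tilde{Q},\tilde{W}}\cong\fg_{\tilde{Q},\tilde{W}}\otimes\HO_{\BC^*}$ supplied by Theorem \ref{Tinj}. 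The hypothesis of that theorem holds here: by Proposition \ref{pur_form} together with the purity of the preprojective CoHA, $\CoHA^{\BC^*}_{\tilde{Q},\tilde{W}}\cong\CoHA_{\tilde{Q},\tilde{W}}\otimes\HO_{\BC^*}$ is pure, hence so is its subquotient $\fg^{\BC^*}_{\tilde{Q},\tilde{W}}$.

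The first step is to check that every element of $\fg^{\BC^*}_{\tilde{Q},\tilde{W}}\otimes\HO_{\BC^*_u}$ is primitive. This is the $\BC^*$-equivariant version of the support argument used for $\hat{\fg}_{\tilde{Q},\tilde{W}}$: one needs that $\lambda_*\DTS^{\BC^*}_{\tilde{Q},\tilde{W}}\otimes\HO_{\BC^*_u}$ is supported on the locus in $\Sym(\BA^1)$ where the generalised eigenvalues of $j(\omega)$ all coincide, $\lambda\colon\Msp^{\BC^*}(\tilde{Q})\to\Sym(\BA^1)$ being the eigenvalue morphism (well-defined because $\omega_i$ has weight $0$). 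This support statement is local on $\B\BC^*$, so it may be verified after pulling back along an affine open cover $U\to\B\BC^*$, where it becomes the non-equivariant support lemma \cite[Lem.4.1]{preproj}. As in the non-equivariant case, the disjointness of the two balls $U',U''$ entering the coproduct then forces $\Delta(\alpha)=1\otimes\alpha+\alpha\otimes 1$ for $\alpha\in\fg^{\BC^*}_{\tilde{Q},\tilde{W}}\otimes\HO_{\BC^*_u}$.

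The second step is a rank count. By the $\HO_{\BC^*}$-linear Milnor--Moore theorem the cocommutative $\HO_{\BC^*}$-bialgebra $\CoHA^{\BC^*}_{\tilde{Q},\tilde{W}}$ is $\UEA_{\HO_{\BC^*}}(\hat{\fg}^{\BC^*}_{\tilde{Q},\tilde{W}})$, while the equivariant PBW isomorphism of Theorem \ref{Tinj} identifies it, as a bigraded $\HO_{\BC^*}$-module, with $\Sym_{\HO_{\BC^*}}(\fg^{\BC^*}_{\tilde{Q},\tilde{W}}\otimes\HO_{\BC^*_u})$. Since everything in sight is free over $\HO_{\BC^*}=\BQ[t]$ (again by Theorem \ref{Tinj}), comparing graded ranks forces the space of primitives to have the same graded rank as $\fg^{\BC^*}_{\tilde{Q},\tilde{W}}\otimes\HO_{\BC^*_u}$, and with the containment of the first step this yields $\hat{\fg}^{\BC^*}_{\tilde{Q},\tilde{W}}=\fg^{\BC^*}_{\tilde{Q},\tilde{W}}\otimes\HO_{\BC^*_u}$; plugging in $\fg^{\BC^*}_{\tilde{Q},\tilde{W}}\cong\fg_{\tilde{Q},\tilde{W}}\otimes\HO_{\BC^*}$ gives \eqref{dgiso}. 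For the final assertion, the forgetful morphism $\CoHA^{\BC^*}_{\tilde{Q},\tilde{W}}\to\CoHA_{\tilde{Q},\tilde{W}}$ of Proposition \ref{pur_form} realises $(-)\otimes_{\HO_{\BC^*}}\HO_{\BC^*}/\fm_{\BC^*}$ and is compatible with the Hall product and with the factorization coproduct, since $\lambda$ itself reduces modulo $\fm_{\BC^*}$ to its non-equivariant counterpart; restricting this bialgebra map to primitives gives a Lie algebra homomorphism $\hat{\fg}^{\BC^*}_{\tilde{Q},\tilde{W}}\to\hat{\fg}_{\tilde{Q},\tilde{W}}$ which, by the module isomorphism \eqref{BPSD} of Theorem \ref{Tinj} (realised by the same specialisation), is surjective with kernel $\fm_{\BC^*}\cdot\hat{\fg}^{\BC^*}_{\tilde{Q},\tilde{W}}$, yielding the asserted Lie algebra isomorphism.

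I expect the main obstacle to be the careful transfer of the two non-equivariant inputs — the support/primitivity lemma and the Milnor--Moore/PBW package — into genuinely $\HO_{\BC^*}$-linear statements: one must see that the factorization structure descends along the $\BC^*$-quotient and that the splitting of $\JH^{\BC^*}_*\phip{\Tr(\tilde{W})}\BQ_{\vir}$ persists (using moduli of framed representations as auxiliary spaces, exactly as in the proof of Theorem \ref{Tinj}), and one must keep track of purity so that the graded-rank comparison over $\BQ[t]$ is legitimate rather than merely a statement about generic ranks. Each of these reduces, locally over $\B\BC^*$, to something already established in \cite{preproj,QEAs}; the work lies in assembling them compatibly.
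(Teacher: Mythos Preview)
Your argument is correct and is essentially the same as the paper's: both rest on $\HO_{\BC^*}$-linearity of the coproduct, freeness of $\CoHA^{\BC^*}_{\tilde{Q},\tilde{W}}$ over $\HO_{\BC^*}$ (from purity/Proposition~\ref{pur_form}), the Milnor--Moore identification $\CoHA^{\BC^*}_{\tilde{Q},\tilde{W}}\cong\UEA_{\HO_{\BC^*}}(\hat{\fg}^{\BC^*}_{\tilde{Q},\tilde{W}})$, and a graded-rank comparison with the non-equivariant side via \eqref{diso}. The only difference is expository: you additionally pin down the primitives as exactly $\fg^{\BC^*}_{\tilde{Q},\tilde{W}}\otimes\HO_{\BC^*_u}$ by running the equivariant support lemma, whereas the paper bypasses this and extracts the graded dimensions of $\hat{\fg}^{\BC^*}_{\tilde{Q},\tilde{W}}$ directly from the two isomorphisms \eqref{dmm} and \eqref{diso}, together with the observation that $\alpha$ is primitive iff $t\cdot\alpha$ is (so the primitives form a saturated, hence free, $\HO_{\BC^*}$-submodule).
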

\begin{proof}
Before we start, we remind the reader that $\HO_{\BC^*}\neq \HO_{\BC^*_u}$, although these algebras are isomorphic: the first algebra has to do with the extra $\BC^*$-action on $\BC \tilde{Q}$, the second has to do with the action studied at the end of \S \ref{CoHAsec}.
\smallbreak
Since the coproduct is $\HO_{\BC^*}$-linear, and $\CoHA^{\BC^*,\chi}_{\tilde{Q},\tilde{W}}$ is free as a $\HO_{\BC^*}$-module by Proposition \ref{pur_form}, it follows that $\alpha$ is primitive if and only if $t\cdot \alpha$ is primitive, for $t$ a generator of $\fm_{\BC^*}$, the maximal homogeneous ideal in $\HO_{\BC^*}$.  Therefore, the space of primitives for the coproduct (i.e. $\hat{\fg}^{\BC^*}_{\tilde{Q},\tilde{W}}$) is free as a $\HO_{\BC^*}$-module.  The graded dimensions of this module are determined by the existence of the isomorphisms \eqref{dmm} and \eqref{diso}, yielding \eqref{dgiso}.  
\smallbreak
Again by $\HO_{\BC^*}$-linearity of the coproduct, the composition
\[
\hat{\fg}^{\BC^*}_{\tilde{Q},\tilde{W}}/\fm_{\BC^*}\cdotsh \hat{\fg}^{\BC^*}_{\tilde{Q},\tilde{W}}\hookrightarrow \CoHA^{\BC^*,\chi}_{\tilde{Q},\tilde{W}}/\fm_{\BC^*}\cdotsh\CoHA^{\BC^*,\chi}_{\tilde{Q},\tilde{W}}\xrightarrow{\cong}\CoHA^{\chi}_{\tilde{Q},\tilde{W}}
\] 
realises $\hat{\fg}^{\BC^*}_{\tilde{Q},\tilde{W}}/\fm_{\BC^*}\cdotsh\hat{\fg}^{\BC^*}_{\tilde{Q},\tilde{W}}$ as a subspace of primitive elements in $\CoHA^{\chi}_{\tilde{Q},\tilde{W}}$.  But the graded dimensions of $\hat{\fg}^{\BC^*}_{\tilde{Q},\tilde{W}}/\fm_{\BC^*}\cdotsh\hat{\fg}^{\BC^*}_{\tilde{Q},\tilde{W}}$ match those of $\hat{\fg}_{\tilde{Q},\tilde{W}}$, by the first statement, and the second statement follows.
\end{proof}
\subsection{Affinized finite type BPS algebras}
Let $Q$ be an orientation of a finite type, ADE Dynkin diagram.  We calculate the (undeformed) affine BPS Lie algebra $\hat{\fg}_{\tilde{Q},\tilde{W}}$.  We start with an easy lemma:
\begin{lemma}
\label{simpl}
Let $\CH$ be an algebra object in $\Heis\lmod_{\BZ}$, such that there is an isomorphism of underlying $\Heis_{\BZ}$-modules
\[
\CH\cong \bigoplus_{m\in 2\cdot\BZ_{\geq 1}}\Gr_{\bullet}^F(V_{[m]})[-2]^{\oplus a_m}
\]
for integers $a_m$.  Then up to isomorphism, $\CH$ is determined by $\CH^0$, the summand of $\CH$ of cohomological degree zero, considered as an ordinary associative algebra.  Similarly, if $\fg$ is a Lie algebra object in $\Heis\lmod_{\BZ}$ such that there is an isomorphism of underlying $\Heis_{\BZ}$-modules
\[
\fg\cong \bigoplus_{m\in 2\cdot \BZ_{\geq 1}}\Gr_{\bullet}^F(V_{[m]})[-2]^{\oplus b_m}
\]
for integers $b_m$, then $\fg$ is determined up to isomorphism by $\fg^0$, considered as an ordinary Lie algebra.
\end{lemma}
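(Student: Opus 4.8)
The plan is to show that the hypothesized $\Heis$-module structure already forces the shape of $\CH$, and that the multiplication is then pinned down by equivariance together with positivity of central charges. The basic structural observation is that each summand $\Gr^F_\bullet(V_m)[-2]$ is, as a module over the polynomial subalgebra generated by the raising operator $p=[D^2,-]$ (acting via the canonical inclusion $\Heis\hookrightarrow\Heis_Q$), free of rank one, generated by its degree-zero part, on which $q$ acts by zero and $c$ by the central charge $m>0$. Hence $\CH$ is a free $\BQ[p]$-module on the subspace $\CH^0$, the map $p^k\colon\CH^0\to\CH^{2k}$ is an isomorphism for every $k\ge 0$, and $p$ acts injectively. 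Moreover, since $\CH$ is integrable, $c=\sum_i c_i$ acts on the central-charge-$\dd$ summand by $|\dd|$, which is strictly positive on every piece occurring in $\CH$ (in the cases of interest the charge-$0$ part is absent or is just the unit, so this never needs separate attention beyond trivial products). In particular the $\Heis$-module structure of $\CH$ is itself reconstructed from $\CH^0$ together with its $\BN^{Q_0}$-grading: on a charge-$\dd$ element $v\in\CH^0$ one has $c\,p^av=|\dd|\,p^av$, and from $[q,p]=c$ and $qv=0$ one gets $q\,p^av=a|\dd|\,p^{a-1}v$.

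Next I would reconstruct the product. Since $\CH$ is an algebra object, multiplication is $\Heis_Q$-equivariant and (being a morphism in the graded category) preserves the cohomological grading; in particular $p$ and $q$ act by derivations and $\CH^0\cdot\CH^0\subseteq\CH^0$. Every element of $\CH$ is a $\BQ$-linear combination of elements $p^ax_0$ with $x_0\in\CH^0$, so it suffices to determine $p^ax_0\cdot p^by_0$ for central-charge-homogeneous $x_0,y_0\in\CH^0$ of charges $\nu,\mu$. This product lies in $\CH^{2(a+b)}=p^{a+b}\CH^0$, hence equals $c_{a,b}\,p^{a+b}(x_0\cdot y_0)$ for a unique scalar $c_{a,b}$. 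Applying the derivation $q$, using $q\,p^ax_0=a\nu\,p^{a-1}x_0$, $q\,p^by_0=b\mu\,p^{b-1}y_0$, $q\,p^{a+b}(x_0y_0)=(a+b)(\nu+\mu)\,p^{a+b-1}(x_0y_0)$, and injectivity of $p$, one obtains the recursion $(a+b)(\nu+\mu)c_{a,b}=a\nu\,c_{a-1,b}+b\mu\,c_{a,b-1}$ with $c_{0,0}=1$; since $\nu+\mu>0$ this determines all the $c_{a,b}$ (explicitly $c_{a,b}=\nu^a\mu^b/(\nu+\mu)^{a+b}$). Thus the structure constants of $\CH$, in the $\BQ[p]$-basis coming from $\CH^0$, are completely determined by the product on $\CH^0$ and its central-charge grading, so $\CH$ is determined up to isomorphism by the subalgebra $\CH^0$.

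The Lie algebra statement is proved verbatim the same way: $\fg$ is free over $\BQ[p]$ on $\fg^0$, the bracket is $\Heis$-equivariant so $p,q$ act by Lie derivations, $[\fg^0,\fg^0]\subseteq\fg^0$, and $[p^ax_0,p^by_0]=c_{a,b}\,p^{a+b}[x_0,y_0]$ with the same scalars $c_{a,b}$ fixed by the identical recursion, whence $\fg$ is determined up to isomorphism by $\fg^0$.

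There is no serious obstacle here; the only point requiring care is the input that every central charge occurring in $\CH$ (resp.\ $\fg$) is strictly positive, since this is exactly what makes $\nu+\mu\ne 0$ and the recursion solvable — and is also what fails for objects supported in charge $0$, where the conclusion would break. One should also keep in mind that the argument uses only the total operators $p=\sum_i p_i$, $q=\sum_i q_i$, $c=\sum_i c_i$ of $\Heis\subset\Heis_Q$ together with the $\BN^{Q_0}$-grading on $\CH^0$, so ``determined by $\CH^0$'' is to be read as ``determined by $\CH^0$ with its residual central-charge decomposition''.
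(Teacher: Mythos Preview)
Your argument is correct and follows the same idea as the paper's: both use that the lowering operator $q$ is injective on positive cohomological degrees (equivalently, that $p^k\colon\CH^0\to\CH^{2k}$ is an isomorphism) together with the Leibniz rule $q(\alpha\beta)=q(\alpha)\beta+\alpha\,q(\beta)$ to determine products inductively from their restriction to degree zero; the paper states this in one sentence without writing out a basis, whereas you compute the structure constants explicitly.

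One small expository point: the assertion ``hence equals $c_{a,b}\,p^{a+b}(x_0\cdot y_0)$ for a unique scalar $c_{a,b}$'' does not follow merely from the product lying in $p^{a+b}\CH^0$ (e.g.\ if $x_0y_0=0$ this says nothing). The correct phrasing is to write $(p^ax_0)(p^by_0)=p^{a+b}w_{a,b}$ for a unique $w_{a,b}\in\CH^0$, and then your recursion $(a+b)(\nu+\mu)w_{a,b}=a\nu\,w_{a-1,b}+b\mu\,w_{a,b-1}$, with $w_{0,0}=x_0y_0$, shows inductively that $w_{a,b}$ is the claimed scalar multiple of $x_0y_0$; this is where $\nu+\mu>0$ is actually used.
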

\begin{proof}
Under the conditions of the lemma, $\CH$ is concentrated in cohomological degrees indexed by $2\cdotsh\BN$.  Moreover, the lowering operator $q\colon \CH^{i}\rightarrow \CH^{i-2}$ is injective for $i>0$, since $\CH^{\geq 2}$ is spanned by elements $z^mD^n$, with $m,n\geq 1$, on which $q$ acts by sending $z^mD^n\mapsto nz^mD^{n-1}$.  As such, if $\alpha,\beta\in \CH$ are not both of cohomological degree zero, $\alpha\star \beta$ is determined by $q(\alpha\star\beta)=q(\alpha)\star\beta+\alpha\star q(\beta)$, and the result follows by induction.  The proof for $\fg$ is identical.
\end{proof}
Given a $\BN^{Q_0}$-graded Lie algebra $\fg$ we form the $\BN^{Q_0}$-graded Lie algebra $\fg\otimes_{\BQ} \BQ[D]$ by $\BQ[D]$-linear extension.  We make this into a $\Heis_Q$-module by setting $q_{\Sigma}(g\otimes D^m)=mg\otimes D^{m-1}$ and $p_i(g\otimes D^m)=\dd_ig\otimes D^{m+1}$, for $g$ of homogeneous $\BN^{Q_0}$-degree $\dd$.  The $\Heis_Q$-module $\fg\otimes_{\BQ} \BQ[D]$ inherits a $\Heis$-module structure via \eqref{Heis_forg}, and is $\BZ$-graded by (doubling the) $D$-degree.  For $Q$ a finite type ADE Dynkin diagram with associated Lie algebra $\fg$ and negative half $\fn_Q^-\subset \fg$, the Lie algebra object $\fn_Q^-\otimes_{\BQ}\BQ[D]$ may thus be considered as a Lie algebra object in $\Heis\lmod_{\BZ}$ satisfying the assumptions of Lemma \ref{simpl}.
\begin{proposition}
Let $Q$ be a quiver obtained by orienting a finite type Dynkin diagram.  There is an isomorphism of Lie algebras
\[
\hat{\fg}_{\tilde{Q},\tilde{W}}\cong \fn_{Q}^-\otimes_{\BQ}\BQ[D]
\]
where $\fn_Q^-$ is half of the simple Lie algebra associated to the underlying Dynkin diagram of $Q$.
\end{proposition}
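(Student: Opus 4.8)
The plan is to reduce the statement to the rigidity result Lemma \ref{simpl}, after first pinning down the non-affine BPS Lie algebra $\fg_{\tilde{Q},\tilde{W}}$ in the finite-type case via Kac polynomials. The idea is that both $\hat{\fg}_{\tilde{Q},\tilde{W}}$ and $\fn_Q^-\otimes_{\BQ}\BQ[D]$ can be exhibited as Lie algebra objects in $\Heis_Q\lmod_{\BZ}$ of the shape to which Lemma \ref{simpl} applies, with the same weight-zero part, and the lemma then forces them to be isomorphic.

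First I would compute $\fg_{\tilde{Q},\tilde{W}}$. Since $Q$ orients a finite type Dynkin diagram, $\kac_{Q,\dd}(q)=\dim(\fn^-_{Q,\dd})$ by \eqref{Kac_ADE}, and this is a constant (namely $1$ if $\dd$ is a positive root of $Q$ and $0$ otherwise). Feeding this into the generating series identity \eqref{KactoDT} forces $\fg^{i}_{\tilde{Q},\tilde{W},\dd}=0$ for all $i\neq 0$ together with $\dim(\fg^{0}_{\tilde{Q},\tilde{W},\dd})=\dim(\fn^-_{Q,\dd})$; so $\fg_{\tilde{Q},\tilde{W}}$ is concentrated in cohomological degree $0$. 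As $Q$ has no loops we have $Q^{\mathrm{re}}=Q$, so Proposition \ref{dzprop} gives an isomorphism of $\BN^{Q_0}$-graded Lie algebras $\fg^{0}_{\tilde{Q},\tilde{W}}\cong\fn^-_Q$. Hence $\fg_{\tilde{Q},\tilde{W}}\cong\fn^-_Q$, concentrated in cohomological degree $0$, and by the definition of the affine BPS Lie algebra
\[
\hat{\fg}_{\tilde{Q},\tilde{W}}=\fg_{\tilde{Q},\tilde{W}}\otimes\HO_{\BC^*_u}\cong\fn^-_Q\otimes\BQ[u]
\]
as $\BN^{Q_0}$-graded, cohomologically graded vector spaces, with $u$ in cohomological degree $2$.

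It remains to upgrade this to an isomorphism of Lie algebras with $\fn^-_Q\otimes_{\BQ}\BQ[D]$, i.e.\ to show that the bracket on $\hat{\fg}_{\tilde{Q},\tilde{W}}$ is the naive $\BQ[D]$-linear extension of the bracket on $\fn^-_Q$ under the identification $u\leftrightarrow D$, rather than a bracket twisted in the $u$-direction (as happens for $Q=Q^{(1)}$; compare \eqref{GrW}). Here I would use the $\Heis_Q$-module structure on $\CoHA_{\tilde{Q},\tilde{W}}$ — with $p_i$ acting by multiplication by the tautological class $u_{(i)}$ of \S\ref{CoHAsec}, $q_i$ the associated lowering operator, and $c_i$ acting on the $\dd$-graded piece by the central charge $\dd_i$ — which restricts to $\hat{\fg}_{\tilde{Q},\tilde{W}}$ and makes it a Lie algebra object in $\Heis_Q\lmod_{\BZ}$. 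Because $\fg_{\tilde{Q},\tilde{W}}$ lives in cohomological degree $0$ and $\hat{\fg}_{\tilde{Q},\tilde{W}}=\fg_{\tilde{Q},\tilde{W}}\otimes\HO_{\BC^*_u}$ is free over $\HO_{\BC^*_u}=\BQ[u]$ with $u$ acting as the raising operator, the underlying $\Heis$-module of $\hat{\fg}_{\tilde{Q},\tilde{W}}$ is a direct sum of shifted copies $\Gr^F_{\bullet}(V_m)[-2]$, i.e.\ the hypotheses of Lemma \ref{simpl} are met. They hold tautologically for $\fn^-_Q\otimes_{\BQ}\BQ[D]$ with the $\Heis_Q$-structure recalled just before that lemma. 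Since the weight-zero parts of both objects are $\fn^-_Q$ with its standard bracket — by the previous paragraph on one side, by construction on the other — Lemma \ref{simpl} produces an isomorphism $\hat{\fg}_{\tilde{Q},\tilde{W}}\cong\fn^-_Q\otimes_{\BQ}\BQ[D]$ of Lie algebra objects in $\Heis_Q\lmod_{\BZ}$, in particular of Lie algebras, which is the claim.

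The computation of $\fg_{\tilde{Q},\tilde{W}}$ is a direct consequence of cited results, so the substance of the argument, and the main obstacle, is the second part: verifying that $\hat{\fg}_{\tilde{Q},\tilde{W}}$ really carries a $\Heis_Q$-module structure of the precise shape demanded by Lemma \ref{simpl}. Concretely, this comes down to checking that the lowering operators $q_i$ act on $\fg_{\tilde{Q},\tilde{W}}\otimes\BQ[u]$ as scalar multiples of $\partial_u$, so that $q_i\colon\hat{\fg}^{j}_{\tilde{Q},\tilde{W}}\to\hat{\fg}^{j-2}_{\tilde{Q},\tilde{W}}$ is injective for $j>0$. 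Once that is in place, Lemma \ref{simpl} propagates the weight-zero bracket across all cohomological degrees with no further choices — in particular it rules out any $u$-direction twist — and the proposition follows.
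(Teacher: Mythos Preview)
Your proposal is correct and follows essentially the same route as the paper's proof: identify $\hat{\fg}_{\tilde{Q},\tilde{W}}^{0}\cong\fn_Q^-$ via Proposition~\ref{dzprop}, then invoke Lemma~\ref{simpl}. The paper's proof is the terse two-line version of exactly this; you have simply unpacked the implicit steps --- in particular the Kac polynomial argument \eqref{Kac_ADE}+\eqref{KactoDT} forcing $\fg_{\tilde{Q},\tilde{W}}$ into cohomological degree~$0$, and the check that $\hat{\fg}_{\tilde{Q},\tilde{W}}$ has the $\Heis$-module shape demanded by Lemma~\ref{simpl}. Your flagged ``main obstacle'' is handled by the material in \S\ref{HeAsec}: the lowering operator $\partial_u$ annihilates $\fg_{\tilde{Q},\tilde{W}}$ (the proposition following Theorem~\ref{Heis_lift}), so each $\hat{\fg}_{\tilde{Q},\tilde{W},\dd}$ is a lowest-weight $\Heis$-module generated by its degree-zero piece, hence isomorphic to a shifted $\Gr^F_{\bullet}V_m$ as required.
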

\begin{proof}
It follows from \eqref{Kac_ADE} and \eqref{KactoDT} that $\fg^i_{\tilde{Q},\tilde{W}}=0$ for $i\neq 0$, and so $\hat{\fg}^i_{\tilde{Q},\tilde{W}}=0$ for $i<0$.  By Proposition \ref{dzprop} there is an isomorphism $\hat{\fg}_{\tilde{Q},\tilde{W}}^0\cong \fn_{Q}^-$, and then the result follows from Lemma \ref{simpl}.
\end{proof}
We leave it to the enthusiastic reader to calculate the deformed affine BPS Lie algebra $\fg_{\tilde{Q},\tilde{W}}^{\BC^*}$.
\subsection{Nakajima quiver varieties}
\label{NakSec}
Fix a finite quiver $Q$, and a framing vector $\ff\in \BN^{Q_0}$.  We define the quiver $Q_{\ff}$ to be the quiver obtained by adding a single vertex $\infty$ to the set $Q_0$, and for each $i\in Q_0$, adding $\ff_i$ arrows from $\infty$ to $i$, which we label $a_{i,m}$ for $m=1,\ldots ,\ff_i$.  We define $\widetilde{Q_{\ff}}$ to be the quiver obtained by tripling $Q_{\ff}$, and set $Q^+_{\ff}$ to be the quiver obtained by removing the loop $\omega_{\infty}$ from $\widetilde{Q_{\ff}}$.  We define 
\[
W^+=\left(\sum_{a\in Q_1}[a,a^*]+\sum_{i\in Q_0}\sum_{m=1}^{\ff_i}a_{i,m}a^*_{i,m}\right)\left(\sum_{i\in Q_0}\omega_i\right).
\]
Given a dimension vector $\dd\in\BN^{Q_0}$ we extend $\dd$ to a dimension vector $\dd^+$ for $Q^+_{\ff}$ by setting $\dd^+_{\infty}=1$.  We define $\BA_{Q^+_{\ff},\dd^+}^{\zeta\sst}\subset \BA_{Q^+_{\ff},\dd^+}$ to be the subspace of representations $\CF$ such that the one-dimensional vector space $e_{\infty}\cdot \CF$ generates $\CF$ under the action of $\BC Q^+_{\ff}$.  We call such modules $\zeta$-stable.  Then we define the scheme
\[
\Msp_{\ff,\dd}(Q)=\BA_{Q^+_{\ff},\dd^+}^{\zeta\sst}/\Gl_{\dd}.
\]
This is the moduli scheme parameterising pairs of a $\dd^+$-dimensional $\zeta$-stable $\BC Q^+_{\ff}$-module $\CF$, along with a trivialisation $e_{\infty}\cdotsh\CF\cong\BC$.  
\smallbreak
We identify $\BA_{\overline{Q_{\ff}},\dd^+}$ with the space of tuples $((A_a,A_a^*)_{a\in Q_1},(I_{i,m},J_{i,m})_{i\in Q_0, m\leq \ff_i})$ where $A_a\in \Hom(\BC^{\dd_{s(a)}},\BC^{\dd_{t(a)}})$, $A^*_a\in \Hom(\BC^{\dd_{t(a)}},\BC^{\dd_{s(a)}})$, $I_{i,m}\in (\BC^{\dd_i})^*$ and $J_{i,m}\in \BC^{\dd_i}$.  We define $\BA_{\overline{Q_{\ff}},\dd^+}^{\zeta\sst}\subset \BA_{\overline{Q_{\ff}},\dd^+}$ to be the subspace of representations $\CF$ of the doubled framed quiver such that the one-dimensional space $e_{\infty}\cdotsh\CF$ generates $\CF$ under the action of $\BC\overline{Q_{\ff}}$.  We consider the function 
\begin{align*}
\mu_{\dd}\colon &\BA_{\overline{Q_{\ff}},\dd^+}^{\zeta\sst}\rightarrow \mathfrak{gl}_{\dd}\\
&((A_a,A_a^*)_{a\in Q_1},(I_{i,m},J_{i,m})_{i\in Q_0, m\leq \ff_i})\mapsto \sum_{a\in Q_1}[A_a,A_a^*]+\sum_{i\in Q_0, m\leq \ff_i}I_{i,m}(J_{i,m}).
\end{align*}
The group $\Gl_{\dd}$ acts freely on $\BA_{\overline{Q_{\ff}},\dd^+}^{\zeta\sst}$, and thus on $\mu^{-1}_{\dd}(0)$, and we define the (smooth) \textit{Nakajima quiver variety} $\CN_{\ff,\dd}(Q)=\mu_{\dd}^{-1}(0)/\Gl_{\dd}$ \cite{Nak94}.  Composing the morphisms $\mu_{\dd}^{-1}(0)/\Gl_{\dd}\rightarrow \B \Gl_{\dd}\xrightarrow{\Det} \B \BC^*_u$ defines an action of $\HO_{\BC^*_u}$ on $\HO(\CN_{\ff,\dd}(Q),\BQ^{\vir})$ as in \eqref{udef}; by abuse of notation, we denote this action also by $\bullet$.  Via dimensional reduction, there is a natural isomorphism (see \cite[Prop.6.3]{preproj3})
\begin{equation}
\label{NDR}
\HO(\CN_{\ff,\dd}(Q),\BQ^{\vir})\cong \HO(\Msp_{\ff,\dd}(Q),\phip{\Tr(W^+)}).
\end{equation}
Let $\dd',\dd''\in\BN^{Q_0}$ be a pair of dimension vectors satisfying $\dd'+\dd''=\dd$.  We define $\Msp_{\ff,\dd',\dd''}(Q)$ to be the moduli scheme of pairs of a short exact sequence
\[
0\rightarrow \CF'\rightarrow \CF\rightarrow \CF''\rightarrow 0
\]
along with a trivialisation $e_{\infty}\cdotsh \CF\cong \BC$, where $\CF$ is a $\dd^+$-dimensional $\zeta$-stable $\BC Q^+_{\ff}$-module, and $\CF'$ is a $\dd'$-dimensional $\BC \tilde{Q}$-module, considered as a $\BC Q^+_{\ff}$-module with dimension vector zero at the vertex $\infty$.  The module $\CF''$ appearing in such a short exact sequence is a $\dd''^+$-dimensional $\zeta$-stable $\BC Q^+_{\ff}$-module.  We consider the standard correspondence diagram
\[
\xymatrix{
\Mst_{\dd'}(\tilde{Q})\times \Msp_{\ff,\dd''}(Q)&\ar[l]_-{\pi_1\times \pi_3}\Msp_{\ff,\dd',\dd''}(Q)\ar[r]^{\pi_2}&\Msp_{\ff,\dd}(Q).
}
\]
Composing the Thom--Sebastiani isomorphism with the pushforward $\pi_{2,\star}$ and pullback $(\pi_1\times\pi_3)^{\star}$ we define the operation
\[
\star\colon \HO\left(\Mst_{\dd'}(\tilde{Q}),\phip{\Tr(\tilde{W})}\right)\otimes \HO\left(\Msp_{\ff,\dd''}(Q),\phip{\Tr(W^+)}\right)\rightarrow \HO\left(\Msp_{\ff,\dd}(Q),\phip{\Tr(W^+)}\right).
\]
\begin{proposition}
\label{comp_Nak}
Let $Q$ be a finite quiver.
\begin{itemize}
\item
Via the isomorphism \eqref{NDR}, the operation $\star$ defines an action of the CoHA $\CoHA^{\chi}_{\tilde{Q},\tilde{W}}$ on the vector space 
\[
M_{\ff}(Q)\coloneqq \bigoplus_{\dd\in \BN^{Q_0}} \HO(\CN_{\ff,\dd}(Q),\BQ^{\vir})
\]
\item
The operation is $\HO_{\BC^*_u}$-linear: there are equalities $u\bullet (\alpha\star \beta)=(u\bullet \alpha)\star\beta+\alpha\star(u\bullet \beta)$.
\end{itemize}
\end{proposition}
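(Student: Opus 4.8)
The plan is to treat the two assertions separately; in both cases the first move is to transport everything through the dimensional reduction isomorphism \eqref{NDR}. Since \eqref{NDR} is merely an identification of $\BN^{Q_0}$-graded, cohomologically graded vector spaces, whereas $\star$ is defined purely in vanishing-cycle cohomology, it is equivalent to show that $\star$ makes $\bigoplus_{\dd\in\BN^{Q_0}}\HO(\Msp_{\ff,\dd}(Q),\phip{\Tr(W^+)})$ into a module over $\CoHA_{\tilde Q,\tilde W}$, and that this module structure satisfies the stated Leibniz rule for the $\HO_{\BC^*_u}$-action.

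For the first bullet point, unitality is immediate: taking $\dd'=0$, the flag scheme $\Msp_{\ff,0,\dd''}(Q)$ is canonically $\Msp_{\ff,\dd''}(Q)$, with $\pi_2$ the identity and $\pi_1$ the structure map to $\Mst_0(\tilde Q)=\pt$, so the generator of $\CoHA_{\tilde Q,\tilde W,0}\cong\BQ$ acts by the identity. For associativity, i.e. $(\alpha_1\alpha_2)\star\beta=\alpha_1\star(\alpha_2\star\beta)$ with $\alpha_1,\alpha_2\in\CoHA_{\tilde Q,\tilde W}$ multiplied via the Hall product and $\beta\in M_\ff(Q)$, I would introduce the three-step flag scheme $\Msp_{\ff,\dd_1,\dd_2,\dd_3}(Q)$ parametrizing a $\zeta$-stable framed module $\CF$ of dimension $(\dd_1+\dd_2+\dd_3)^+$ together with a two-step filtration $0\subset\CF_{(1)}\subset\CF_{(12)}\subset\CF$ by $\BC\tilde Q$-submodules, with $\CF_{(1)}$ and $\CF_{(12)}/\CF_{(1)}$ of dimension $\dd_1$ and $\dd_2$ and with $\CF/\CF_{(12)}$ the $\zeta$-stable framed quotient of dimension $\dd_3^+$. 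One then checks that both iterated correspondences factor through $\Msp_{\ff,\dd_1,\dd_2,\dd_3}(Q)$: concretely, the fibre products $\Msp_{\ff,\dd_1,\dd_2+\dd_3}(Q)\times_{\Msp_{\ff,\dd_2+\dd_3}(Q)}\Msp_{\ff,\dd_2,\dd_3}(Q)$ and $\Msp_{\ff,\dd_1+\dd_2,\dd_3}(Q)\times_{\Mst_{\dd_1+\dd_2}(\tilde Q)}\Mst_{\dd_1,\dd_2}(\tilde Q)$ are each canonically isomorphic to $\Msp_{\ff,\dd_1,\dd_2,\dd_3}(Q)$, the squares interpolating between the two factorizations are cartesian, and the relevant pushforward legs are projective (each parametrizes $\BC\tilde Q$-submodules of a fixed module, hence is a closed subscheme of a product of Grassmannians over the base). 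Granting this, the identity follows by the same diagram chase as in the proof of associativity of the critical CoHA product in \cite{KS2}, \cite[\S5]{QEAs}: the inputs needed are exactly that $\theta$ is an isomorphism along projective maps, proper base change, commutativity of the pullback morphisms \eqref{pbmorph} with Thom--Sebastiani, and the associativity and bifunctoriality of $\TS$, all recorded in \S\ref{toolkit}; the support hypothesis required to pass $\TS$ to derived global sections holds because $\phip{\Tr(\tilde W)}$ and $\phip{\Tr(W^+)}$ are supported on the respective critical loci inside the zero fibres. Compare also the module structures constructed in \cite[\S6]{preproj3} and the original construction of \cite{ScVa13}.

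For the second bullet point, I would first make the $\HO_{\BC^*_u}$-action on $M_\ff(Q)$ explicit: on $\Msp_{\ff,\dd}(Q)$ the tautological family $\CF$ has underlying vertex bundles $\CF_i=e_i\cdotsh\CF$ of rank $\dd_i$, and, exactly as in \S\ref{CoHAsec}, cup product with $c_1(\bigotimes_{i\in Q_0}\Det(\CF_i))$ — equivalently, pullback along $(\bigotimes_{i}\Det(\CF_i),\id)\colon\Msp_{\ff,\dd}(Q)\to\B\BC^*_u\times\Msp_{\ff,\dd}(Q)$ followed by $\TS$ — defines the operator $u\bullet(-)$. On the correspondence $\Msp_{\ff,\dd',\dd''}(Q)$ the tautological short exact sequence $0\to\CF'\to\CF\to\CF''\to 0$ gives $\Det(\CF_i)\cong\Det(\CF'_i)\otimes\Det(\CF''_i)$ at each vertex, so the pullback along $\pi_2$ of the ambient class $c_1(\bigotimes_i\Det(\CF_i))$ on $\Msp_{\ff,\dd}(Q)$ equals the sum of the pullbacks along $\pi_1$ and $\pi_3$ of the corresponding determinant classes on $\Mst_{\dd'}(\tilde Q)$ and $\Msp_{\ff,\dd''}(Q)$. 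The Leibniz rule $u\bullet(\alpha\star\beta)=(u\bullet\alpha)\star\beta+\alpha\star(u\bullet\beta)$ is then a one-line consequence of the projection formula, i.e. of the fact — noted at the end of \S\ref{toolkit}, the analogue of \cite[Prop.2.14]{Da13} — that $\pi_{2,\star}$ and $(\pi_1\times\pi_3)^\star$ both commute with the cup-product actions of ambient cohomology.

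I expect the substantive work, and the only place where an error could plausibly hide, to be in the first bullet point: verifying that $\Msp_{\ff,\dd_1,\dd_2,\dd_3}(Q)$ genuinely realizes both iterated correspondences, that the intermediate squares are transverse enough for base change in vanishing-cycle cohomology, and that the various $\BQ_{\vir}$-shifts — which on the framed side are dictated by the possible singularities of $\Msp_{\ff,\dd}(Q)$ and by the normalization used in \eqref{NDR} — cancel consistently between the two orders of composition. Conceptually nothing here is new relative to \cite{KS2,ScVa13,QEAs,preproj3}; the content is the bookkeeping. The second bullet point is then purely formal once the tautological-determinant description of $u\bullet(-)$ is in hand.
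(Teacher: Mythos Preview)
Your proposal is correct and matches the paper's approach. The paper's own proof is extremely terse: for the first bullet it simply cites \cite{YZ18}, and for the second it says ``proved as in Proposition~\ref{Aproof}''. Your sketch of associativity via the three-step flag correspondence $\Msp_{\ff,\dd_1,\dd_2,\dd_3}(Q)$ is precisely the standard argument underlying such references, and your treatment of the Leibniz rule---using $\Det(\CF_i)\cong\Det(\CF'_i)\otimes\Det(\CF''_i)$ on the correspondence together with compatibility of $(\pi_1\times\pi_3)^\star$ and $\pi_{2,\star}$ with the ambient $\HO_{\BC^*_u}$-action---is exactly the content of the diagram chase in Proposition~\ref{Aproof}, transported to the module setting.
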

\begin{proof}
For the first part, see e.g. \cite{So16,YZ18}.  The second part is proved as in Proposition \ref{Aproof} below.
\end{proof}
Now fix $Q=Q^{(1)}$ the one loop quiver, and fix the framing vector $\ff=1$.  Then we have $\CoHA_{\tilde{Q},\tilde{W},1}\cong \HO_{\BC^*}\cong \BQ[x]$.  One shows as in \cite{preproj3} that for $1_1$ the copy of $1\in\BQ[x]\cong \CoHA_{\tilde{Q},\tilde{W},1}$ we have
\begin{equation}
\label{basicfp}
1_1\star-=\fp_1
\end{equation}
where $\fp_1$ is the Nakajima raising operator \cite{Nak97} on $M_{1}(Q)\cong \bigoplus_{n\in\BN}\HO(\Hilb_n(\BA^2),\BQ^{\vir})$.  From the second part of Proposition \ref{comp_Nak} it follows that 
\begin{equation}
\label{basicder}
(u\bullet \alpha)\star(-)=[u\bullet,\alpha\star](-)
\end{equation}
as operators on $M_{1}(Q)$.  We recall the following theorem of Manfred Lehn:
\begin{theorem}[\cite{Lehn99} Thm.3.10]
\label{LehnThm}
Write $\fp_n$ for Nakajima's $n$th raising operator on $\Hilb(\BA^2)$.  Write $\fp'_n=[u\bullet,\fp_n]$.  Then
\[
[\fp'_1,\fp_n]=-n\fp_{n+1}.
\]
\end{theorem}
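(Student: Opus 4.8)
The plan is to realise every operator as a correspondence operator on the tower $\bigoplus_{n}\HO(\Hilb_n(\BA^2),\BQ)$ and to compute the bracket geometrically; since $K_{\BA^2}=0$ and $\HO^{>0}(\BA^2)=0$, all of Lehn's universal correction terms drop out and the answer is as clean as stated. The first step is to identify $u\cdot$: by the determinant construction of \S\ref{CoHAsec}, applied at the unique gauge vertex of the one-loop quiver, the operator $u\bullet$ on $\bigoplus_n\HO(\Hilb_n(\BA^2),\BQ)$ is cup product with $c_1(\mathcal{O}^{[n]})$, where $\mathcal{O}^{[n]}$ is the tautological rank-$n$ bundle on $\Hilb_n(\BA^2)$ --- this is Lehn's operator $\mathfrak{d}$ (with conventions fixed so the signs match), so that $\fp'_n=[\mathfrak{d},\fp_n]$. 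Recall also that $\fp_n$ is the correspondence operator attached to the punctual nested Hilbert scheme
\[
Z_n=\{(I'\supset I)\;\lvert\;\ell(I/I')=n,\ \supp(I/I')\text{ is a point}\}\subset\Hilb_\ell(\BA^2)\times\Hilb_{\ell+n}(\BA^2)
\]
with its two projections \cite{Nak97}, and that \eqref{basicfp} gives $\fp_1=1_1\star$, so that \eqref{basicder} yields $\fp'_1=(u\bullet 1_1)\star$.

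The core of the argument is a Chern-class computation on $Z_1$ followed by a composition of correspondences. On $Z_1\subset\Hilb_\ell(\BA^2)\times\Hilb_{\ell+1}(\BA^2)$ there is a residual line bundle $L$ with fibre $I/I'$, and the short exact sequence $0\to L\to\mathcal{O}^{[\ell+1]}\to\mathcal{O}^{[\ell]}\to 0$ of tautological bundles over $Z_1$ gives $c_1(\mathcal{O}^{[\ell+1]})-c_1(\mathcal{O}^{[\ell]})=c_1(L)$ there; by the projection formula $\fp'_1=[\mathfrak{d},\fp_1]$ is therefore the $Z_1$-correspondence twisted by $c_1(L)$ --- it adds a single box and weights by a $\psi$-class at that box. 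Next one forms the two double-flag schemes governing the composites $\fp'_1\circ\fp_n$ and $\fp_n\circ\fp'_1$ (flags dropping the length by $1$ then $n$, resp.\ by $n$ then $1$). Over the open locus where the extra box and the length-$n$ blob sit above distinct points of $\BA^2$ the two composites coincide, so they cancel in the commutator, and hence $[\fp'_1,\fp_n]$ is supported over the small diagonal, the locus of a single collision point. There the computation localises to $\Hilb$ of the formal disc; the $c_1(L)$-twist collapses the otherwise excess-dimensional fibre, and a local count identifies the commutator kernel with $-n$ times the punctual correspondence $Z_{n+1}$ --- the factor $n$ recording the number of ways the extra box can be attached to a length-$n$ punctual configuration and the sign coming from the $L$-twist (equivalently, from the normal bundle of $Z_{n+1}$ in the flag scheme).

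The \emph{main obstacle} is precisely this last local multiplicity computation: showing that no lower-length or non-punctual components survive in the commutator and that the scalar is exactly $-n$ rather than, say, $n+1$ or $\binom{n+1}{2}$. A clean way to pin the constant down is torus-equivariant localisation: for $T=(\BC^*)^2$ the fixed points of $\Hilb_n(\BA^2)$ are the monomial ideals $I_\lambda$, $\lambda\vdash n$, and the matrix coefficients of the $\fp_m$ and of $\mathfrak{d}$ (hence of $\fp'_1=[\mathfrak{d},\fp_1]$) in the fixed-point basis are given by the explicit box-weight formulas of Nakajima and Vasserot; the identity $[\fp'_1,\fp_n]=-n\fp_{n+1}$ then reduces to a finite combinatorial identity about growing a Young diagram one box at a time, which can be checked directly and then specialised back to $T=\{1\}$ since every operator here is defined over $\HO_T$. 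I would write up the localisation version, since it makes the constant $-n$ transparent and sidesteps the excess-intersection bookkeeping, retaining the geometric picture above as motivation.
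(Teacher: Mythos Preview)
The paper does not give its own proof of this statement: it is quoted verbatim as Lehn's theorem \cite[Thm.~3.10]{Lehn99} and used as a black box (in the proof of Proposition~\ref{LehnWork}). So there is no argument in the paper to compare yours against.

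Your sketch is broadly in the spirit of Lehn's original proof: identify $\mathfrak{d}$ with cup product by $c_1(\mathcal{O}^{[n]})$, realise $[\mathfrak{d},\fp_1]$ as the $Z_1$-correspondence twisted by $c_1(L)$, and then compute the commutator $[\fp'_1,\fp_n]$ by analysing the two double-flag correspondences and their excess intersection over the small diagonal. Lehn carries this out via intersection theory on the incidence varieties (in fact for an arbitrary smooth surface, with the $K_X$-correction terms that you correctly note vanish for $\BA^2$); your proposed alternative of pinning down the constant $-n$ by $(\BC^*)^2$-equivariant localisation at monomial ideals is also standard and valid, and is arguably cleaner for $\BA^2$ specifically. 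Either route is fine, but for the purposes of this paper no proof is needed --- the result is simply cited.
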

We refer to \cite{Nak97,Lehn99} for the definition of the operators $\fp_n$.  Alternatively, via \eqref{basicder}, \eqref{basicfp} and Theorem \ref{LehnThm}, these operators may be defined in terms of the action of $x^0\in\CoHA_{Q^{(3)},\tilde{W},1}$ and $u\bullet$ on $M_{1}(Q)$.

\subsection{Affine Yangian and shuffle algebra embedding}
\label{SHY_sec}
Let $T$ be a torus, acting on the quiver $Q$.  The superpotential $W=0$ is obviously $T$-invariant, and the vanishing cycle sheaf $\phip{\Tr(W)}\BQ_{\Mst^T_{\dd}(Q)}^{\vir}$ on $\Mst^T_{\dd}(Q)$ is isomorphic to the constant (shifted) perverse sheaf $\BQ_{\Mst^T_{\dd}(Q)}^{\vir}$.  It is thus straightforward to describe the vanishing cycle cohomology (leaving out the overall cohomological shifts for now):
\begin{align*}
\HO(\Mst^T_{\dd}(Q),\phip{\Tr(W)})\cong &\HO(\pt/(\Gl_{\dd}\times T),\BQ)\\
\cong &\BQ[x_{1,1},\ldots x_{1,\dd_1},x_{2,1},\ldots,x_{r,\dd_r}]^{\mathfrak{S}_{\dd}}\otimes \BQ[t_1,\ldots t_l]
\end{align*}
where $Q_0=\{1,\ldots ,r\}$ and $\mathfrak{S}_{\dd}=\prod_{i\in Q_0} \mathfrak{S}_{\dd_i}$ is the product of symmetric groups generated by permutations of the variables preserving their first subscript.  

\begin{proposition}\label{tfree}
Let the torus $T=\Hom_{\mathrm{Grp}}(\BZ,(\BC^*)^2)$ act on $\tilde{Q}$ via the function $\tilde{\tau}$ from \eqref{ttaudef}.  The natural morphism $\Phi\colon \CoHA^T_{\tilde{Q},\tilde{W}}\rightarrow \CoHA^T_{\tilde{Q}}$ of algebras is an embedding of algebras.
\end{proposition}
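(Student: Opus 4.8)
The plan is to use purity to reduce to injectivity after inverting the equivariant parameters, and then to establish that generic injectivity by localising to the torus-fixed locus. First I would pass to the generic point of $\Spec(\HO_T)$. The morphism $\Phi$ is $\HO_T$-linear by construction, and by the purity theorem of \cite{preproj} --- in the form \eqref{TZ} --- the source $\CoHA^T_{\tilde{Q},\tilde{W}}\cong\CoHA_{\tilde{Q},\tilde{W}}\otimes\HO_T$ is a free, hence torsion-free, module over $\HO_T\cong\BQ[t_1,t_2]$, so it injects into its localisation at the generic point. It therefore suffices to prove that $\Phi\otimes_{\HO_T}\mathrm{Frac}(\HO_T)$ is injective. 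Note that purity of source and target merely \emph{as mixed Hodge structures} does not suffice here: a morphism of pure Hodge structures of a fixed weight can have nonzero kernel, so the splitting of $\JH^T_*\phip{\Tr(\tilde{W})}\BQ_{\vir}$ used in the proof of Theorem \ref{Tinj} does not, on its own, close the argument.

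Next I would make $\Phi$ concrete. Using the dimensional reduction isomorphism $\Psi'$ of \S\ref{dimred_sec}, identify $\CoHA^T_{\tilde{Q},\tilde{W}}$ with the preprojective CoHA $\bigoplus_{\dd}\HO^{\BoMo}(\Mst^T_{\dd}(\Pi_Q),\BQ_{\vir})$; this removes the vanishing cycle functor, leaving ordinary $T$-equivariant Borel--Moore homology. Tracing the counit $i_*i^{!}\to\id$ through this identification, and through Poincaré duality for the smooth stack $\Mst^T_{\dd}(\tilde{Q})$, realises $\Phi$ as a map of correspondence type between equivariant Borel--Moore homologies --- concretely, as the pushforward along the closed embedding $\Mst^T_{\dd}(\Pi_Q)\hookrightarrow\Mst^T_{\dd}(\tilde{Q})$ cutting out the preprojective relation, i.e.\ as the embedding of the preprojective CoHA into the tripled-quiver shuffle algebra studied by Schiffmann--Vasserot \cite{ScVa13} (and, for general $Q$, in \cite{YZ16,RS17}). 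This step is essentially bookkeeping, but it is what turns the question into one about correspondences between \emph{smooth} stacks.

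For the remaining generic injectivity I would invert $\HO_T$ and apply the localisation theorem. Both sides are then computed by $T$-equivariant Borel--Moore homology of the $T$-fixed loci, that of the smooth stack $\Mst_{\dd}(\tilde{Q})$ included, and $\Phi$ localises to the corresponding map. A weight-space decomposition writes these fixed loci as disjoint unions, indexed by combinatorial types, of quotient stacks built from smaller instances of the same moduli problem, compatibly with $\Phi$ and with the restriction of $\Tr(\tilde{W})$. On the components where the restricted potential vanishes --- in particular for $\dd$ with $|\dd|\leq 1$, where $\Tr(\tilde{W})$ vanishes identically and $\Phi$ is already an isomorphism --- the claim is immediate, and an induction exploiting the self-similar structure of the remaining components, each of which is again of tripled-quiver type for a larger quiver, reduces the statement to the explicit (and elementarily injective) shuffle presentation of the preprojective CoHA. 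This gives injectivity of $\Phi\otimes_{\HO_T}\mathrm{Frac}(\HO_T)$, hence of $\Phi$.

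I expect the last step to be the main obstacle: the real work is in propagating the dimensional-reduction identification through the localisation isomorphism so as to recognise the localised $\Phi$ as the explicit, elementarily injective shuffle embedding, and in organising the weight-space recursion on the fixed loci cleanly; the preliminary reductions (torsion-freeness over $\HO_T$, and the description of $\Phi$ as a pushforward) are formal consequences of \eqref{TZ} and \S\ref{dimred_sec}. One can also short-circuit the last step by quoting the literature: after dimensional reduction $\Phi$ is literally the Schiffmann--Vasserot shuffle embedding, whose injectivity is proven in \cite{ScVa13}, with accounts for general $Q$ in \cite{YZ16,RS17}.
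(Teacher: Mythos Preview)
Your sketch is essentially the argument that the paper invokes: the paper does not give a self-contained proof here, but simply cites \cite[Thm.~10.2]{preproj} for injectivity (after noting that the definition of $\Phi$ and the fact that it respects the product are in \cite{ScVa13}), and the proof in \cite{preproj} proceeds exactly along the lines you outline --- torsion-freeness over $\HO_T$ from purity, dimensional reduction to the preprojective CoHA, and then an equivariant localisation argument on the $T$-fixed loci.

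One caveat on your final ``short-circuit'': the paper explicitly remarks that \cite[Thm.~10.2]{preproj} was written to \emph{fix a gap} in the earlier argument of \cite{ScVa20}, so simply quoting Schiffmann--Vasserot for injectivity (as you do at the end) is not quite safe for general $Q$; the references \cite{YZ16,RS17} you cite concern the identification of the dimensionally-reduced CoHA with the preprojective/shuffle description, not the injectivity of $\Phi$. The substantive content --- organising the localisation/fixed-locus recursion carefully --- is precisely where the gap lay, and is the part you correctly flag as ``the main obstacle''. So your proposal is right in spirit and in outline, but if you were to write it out you would need to supply that step with the care of \cite[Thm.~10.2]{preproj} rather than defer to the earlier literature.
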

The definition of $\phi$ and proof that $\Phi$ preserves the algebra product is given in \cite{ScVa13}.  The fact that it is an embedding of algebras is proved in \cite[Thm.10.2]{preproj}, slightly modifying and fixing a gap in the proof in the published version of \cite[Prop.4.6]{ScVa20}.
\smallbreak
By \eqref{TZ} there is a (non-canonical) isomorphism of $\HO_{T'}$-modules
\begin{equation}
\label{savet}
\CoHA^{T'}_{\BA^2}\cong \CoHA_{\BA^2}\otimes \HO_{T'}.
\end{equation}
We denote by $\alpha^{(0)}_1\in\hat{\mathfrak{g}}^T_{\mathbb{A}^2,1}=\CoHA^{T'}_{\BA^2,1}$ the unique (up to scaling) non-zero element of cohomological degree $2$, and set $\alpha_1^{(m)}\coloneqq u^m\bullet \alpha_1^{(0)}$.
\smallbreak
For what follows, we introduce (the strictly positive part of) the affine Yangian $\CY_{t_1,t_2,t_3}(\widehat{\mathfrak{gl}(1)})$, for which our reference is \cite{Tsy17}.  Let $t_1,t_2,t_3$ be formal parameters satisfying $t_1+t_2+t_3=0$.  Then $\CY_{t_1,t_2,t_3}(\widehat{\mathfrak{gl}(1)})^+$ is the algebra over $\BQ[t_1,t_2,t_3]/(t_1+t_2+t_3)$ generated by symbols $e_i$ with $i\in \BN$  subject to the two relations 
\begin{align}
\label{id1}&[e_{i+3},e_j]-3[e_{i+2},e_{j+1}]+3[e_{i+1},e_{j+2}]-[e_i,e_{j+3}]\\
&+\sigma_2([e_{i+1},e_j]-[e_i,e_{j+1}])=-\sigma_3(e_ie_j+e_je_i) \nonumber\\
&\mathrm{Sym}_{\mathfrak{S}_3}[e_{i_1},[e_{i_2},e_{i_3+1}]]=0\label{id2}
\end{align}
where $\sigma_2=t_1t_2+t_1t_3+t_2t_3$ and $\sigma_3=t_1t_2t_3$.  
\begin{lemma}
\label{fsave}
As a $\HO_T$-module, $\CY_{t_1,t_2,t_3}(\widehat{\mathfrak{gl}(1)})$ is free.
\end{lemma}
\begin{proof}
We define a filtration $G$ on $\CY_{t_1,t_2,t_3}(\widehat{\mathfrak{gl}(1)})$ by putting $e_i$ in degree $-2+2i$.  So $G^r\CY_{t_1,t_2,t_3}(\widehat{\mathfrak{gl}(1)})$ is spanned by Lie words $[e_{i_1},[e_{i_2},[\ldots,[e_{i_{l-1}},e_{i_{l}}]\ldots]$ with $\sum_{j=1}^l(2i_j-2)\leq r$.  Set $\CA=\Gr^G\CY_{t_1,t_2,t_3}(\widehat{\mathfrak{gl}(1)})$.  Then $\CA=\UEA(\CL)\otimes \HO_T$ where $\CL$ is the Lie algebra over $\BQ$ generated by symbols $\overline{e}_i$ with $i\in \BN$ subject to the relations
\begin{align}
\label{id1a}&[\overline{e}_{i+3},\overline{e}_j]-3[\overline{e}_{i+2},\overline{e}_{j+1}]+3[\overline{e}_{i+1},\overline{e}_{j+2}]-[\overline{e}_i,\overline{e}_{j+3}]\\
&\mathrm{Sym}_{\mathfrak{S}_3}[\overline{e}_{i_1},[\overline{e}_{i_2},\overline{e}_{i_3+1}]]=0.\label{id2a}
\end{align}
In particular, $\CA$ is free as a $\HO_T$-module, from which it follows that $\CY_{t_1,t_2,t_3}(\widehat{\mathfrak{gl}(1)})$ is too.
\end{proof}

%The shuffle algebra embedding $\Phi$ is used in \cite{RSYZ20} to prove that the relations \eqref{id1} and \eqref{id2} hold in $\CoHA_{\BA^2}^{T}$:
\begin{proposition}\cite[Thm.7.1.1]{RSYZ20}
\label{RSYZ_thm}
There is an algebra morphism 
\[
\iota\colon\CY_{t_1,t_2,t_3}(\widehat{\mathfrak{gl}(1)})^+\rightarrow \CoHA_{\BA^2}^{T}
\]
sending $e_i$ to $\alpha_1^{(i)}$.  
\end{proposition}

Note that \cite[Thm.7.1.1]{RSYZ20} uses in an essential way the injectivity statement of Proposition \ref{tfree}, meaning that it is sufficient to check that the defining relations of $\CY_{t_1,t_2,t_3}(\widehat{\mathfrak{gl}(1)})^+$ hold after applying $\Phi$.  Since we work with the full rank 2-torus $T$ we may indeed apply \cite[Thm.10.2]{preproj} for this injectivity.  It is furthermore claimed in \cite{RSYZ20} that $\iota$ is injective, although the proof seems to be missing\footnote{Thanks go to the anonymous referee for spotting this gap in the literature.};  injectivity of $\iota$ follows from the following result of Tsymbaluk, as we will see in \S \ref{final_sec}.  
\begin{proposition}\cite[App.C]{Tsy17}
\label{Tsym_save}
The morphism
\[
\iota'\colon (\CY_{t_1,t_2,t_3}(\widehat{\mathfrak{gl}(1)})^+)_{t_2=0}\rightarrow \UEA_{\BQ[t]}(\Rees_F[W^+_{1+\infty}])
\]
sending $e_i$ to $z\tilde{D}^{(i)}$ is an injection.
\end{proposition}
We include here the explicit description of the shuffle algebra product on $\CoHA^T_{Q^{(3)}}$.  Since there is only one vertex, we can simplify the subscripts and write 
\[
\HO(\Mst^T_{d}(Q^{(3)}),\BQ)\cong \BQ[t_1,t_2]\otimes \BQ[x_1,\ldots,x_{d}]^{\mathfrak{S}_{d}}.
\]
We define $\zeta(z)\coloneqq(z+t_1)(z+t_2)(z-t_1-t_2)z^{-1}$.  Given two functions $f(x_1,\ldots,x_d)\in \HO(\Mst^T_{d}(Q^{(3)}),\BQ)$ and $g(x_1,\ldots,x_e)\in \HO(\Mst^T_{e}(Q^{(3)}),\BQ)$ we have
\begin{equation}
\label{shuff_def}
f\star g(x_1,\ldots,x_{d+e})=\sum_{\pi\in\mathrm{Sh}(d,e)}\pi\left(f(x_1,\ldots,x_d)g(x_{d+1},\ldots,g_{d+e})\prod_{\substack{1\leq i\leq d\\d+1\leq j\leq d+e}}\zeta(x_i-x_j)\right)
\end{equation}
where $\mathrm{Sh}(d,e)\subset \mathfrak{S}_{d+e}$ is the subset of the permutation group containing those $\pi$ such that $\pi(i)\leq \pi(i+1)$ for $i\neq d$.
\smallbreak
In \cite{Neg16} a related shuffle algebra is studied, in reference to the AGT relations\footnote{Many thanks to Andrei Negu\c{t} for patiently explaining the following to me.}.  Firstly, define $\zeta'(z)\coloneqq(z+t_1)(z+t_2)(z+t_1+t_2)^{-1}z^{-1}$.  Then we fix the coefficient field $\mathbb{F}=\mathbb{Q}(t_1,t_2)$ and for each $d$ define 
\[
\CoHA'^T_{\mathbb{A}^2,d,\mathrm{loc}}\coloneqq \mathbb{F}(x_1,\ldots x_d)^{\mathfrak{S}_d}
\]
the field of symmetric rational functions.  Then $\CoHA'^T_{\mathbb{A}^2,\mathrm{loc}}\coloneqq \bigoplus_{d\in\mathbb{Z}_{\geq 0}}\CoHA'^T_{\mathbb{A}^2,d,\mathrm{loc}}$ carries a shuffle algebra structure defined as in \eqref{shuff_def} but with $\zeta'(z)$ replacing $\zeta(z)$.  We define the algebra isomorphism
\begin{align*}
J\colon \CoHA^T_{\mathbb{A}^2,\mathrm{loc}}&\rightarrow \CoHA'^T_{\mathbb{A}^2,\mathrm{loc}}\\
f(x_1,\ldots,x_d)&\mapsto\prod_{1\leq i<j\leq d}(z^2-(t_1+t_2)^2)^{-1}f(x_1,\ldots,x_d)(t_1t_2)^{-\deg(f)}.
\end{align*}
Composing with the injections 
\[
\CoHA_{\mathbb{A}^2}^T\xhookrightarrow{i_1} \BQ[t_1,t_2]\otimes \BQ[x_1,\ldots,x_{d}]^{\mathfrak{S}_{d}}\xhookrightarrow{i_2} \CoHA^T_{\mathbb{A}^2,\mathrm{loc}}
\]
we obtain the injection $\Gamma=J\circ i_2\circ i_1$.  In \cite{Neg16}, explicit elements $\tilde{B}_d$ and $\tilde{L}_d$ of a Heisenberg--Virasoro algebra are given: it is easy to see that $\tilde{B}_1=\Gamma(1)$ and $\tilde{L}_1=\Gamma(x_1)$.  Then $\tilde{B}_d$ is equal to a generator of the 1-dimensional vector space $\Gamma(\mathfrak{g}_{\mathbb{A}^2,d})$ by Equation (2.43) of \cite{Neg16} and Proposition \ref{LehnWork} below.
\section{Heisenberg actions on $\CoHA_{Q,W}^{(\psi)}$}
\label{HeAsec}
It turns out that $\Heis_{Q}$-actions on cohomological Hall algebras exist at a very high level of generality\footnote{Note that the Heisenberg algebra appearing in this section has a different origin to the half Heisenberg algebra occurring inside $\hat{\fg}_{\mathbb{A}^2}$, studied in \S \ref{final_sec}.}.  We explain this first, before zooming in on the CoHAs that this paper is most concerned with, i.e. $\CoHA_{\BA^2}^T$ for various choices of $T$.
\subsection{Raising operators}
\begin{proposition}
\label{Aproof}
Let $Q$ be an arbitrary quiver, with a $T$-action for some torus $T=\Hom_{\mathrm{Grp}}(N,\BC^*)$, and with $T$-invariant potential $W\in\BC Q/[\BC Q,\BC Q]_{\mathrm{vect}}$.  For each $i\in Q_0$ the morphism $\alpha\mapsto u\bullet_i \alpha$, defined in \eqref{lidef} defines a derivation of $\CoHA^{T,{(\psi)}}_{Q,W}$.
\end{proposition}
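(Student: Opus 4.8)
The plan is to verify the Leibniz rule componentwise: since $u\bullet_i$ preserves the $\BN^{Q'_0}$-grading and the Hall multiplication $m$ is the sum of the operations $m_{\dd',\dd''}$, it suffices to show that for all $\dd',\dd''$ and all $\alpha\in\CoHA^T_{Q',W,\dd'}$, $\beta\in\CoHA^T_{Q',W,\dd''}$,
\[
u\bullet_i(\alpha\star\beta)=(u\bullet_i\alpha)\star\beta+\alpha\star(u\bullet_i\beta),\qquad \star=m_{\dd',\dd''}.
\]
The statement for the full operator $\bullet$ of \S\ref{CoHAsec} then follows by summing over $i\in Q'_0$.

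The one geometric input I would isolate is the additivity of determinant line bundles along short exact sequences: for a short exact sequence $0\to\CF'\to\CF\to\CF''\to0$ of flat families of $\BC Q'$-modules one has $\Det(\CF_i)\cong\Det(\CF'_i)\otimes\Det(\CF''_i)$. In terms of the convolution diagram of \S\ref{CoHAsec}, this says that the composition $\Mst^T_{\dd',\dd''}(Q')\xrightarrow{\pi_2}\Mst^T_{\dd}(Q')\xrightarrow{\Det_{(i)}}\B\BC^*_u$ equals the product, under the multiplication $\B\BC^*_u\times\B\BC^*_u\to\B\BC^*_u$ classifying the tensor product of line bundles, of $\Det_{(i)}\circ\pi_1$ and $\Det_{(i)}\circ\pi_3$. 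Since $u\in\HO_{\BC^*_u}$ is primitive for this multiplication (the first Chern class of a tensor product is the sum), pulling the corresponding tautological relation back along $\pi_2$ yields
\[
\pi_2^{\star}u_{(i)}=\pi_1^{\star}u_{(i)}+\pi_3^{\star}u_{(i)}\qquad\text{in }\HO(\Mst^T_{\dd',\dd''}(Q'),\BQ).
\]

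With this identity in hand I would chain through the definition $m_{\dd',\dd''}=\TS\circ s^{\star}\circ\pi_{2,\star}\circ(\pi_1\times\pi_3)^{\star}$ (up to reordering of factors). Each building block intertwines the module structure over the cohomology of the relevant base: the pullbacks $(\pi_1\times\pi_3)^{\star}$ and $s^{\star}$ do so by the compatibility of \eqref{pbmorph} with cohomology actions recorded in \S\ref{toolkit}; the Thom--Sebastiani/K\"unneth isomorphism $\TS$ intertwines the $\HO$-action on a product stack with the tensor product of the $\HO$-actions on the factors (again \S\ref{toolkit}); and $\pi_{2,\star}$ satisfies the projection formula with respect to $\HO(\Mst^T_{\dd}(Q'),\BQ)$, since $\pi_2$ is proper, being the composite of a closed immersion with a relative partial-flag-variety bundle. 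Hence, starting from $u\bullet_i(\alpha\star\beta)=u_{(i)}\cdot\pi_{2,\star}\big((\pi_1\times\pi_3)^{\star}(\cdots)\big)$, the projection formula moves $u_{(i)}$ inside as $\pi_2^{\star}u_{(i)}$, the displayed identity rewrites it as $\pi_1^{\star}u_{(i)}+\pi_3^{\star}u_{(i)}$, and compatibility of $(\pi_1\times\pi_3)^{\star}$, $s^{\star}$ and $\TS$ with the cohomology actions lets me pull the two summands out onto the respective factors, identifying them with $(u\bullet_i\alpha)\star\beta$ and $\alpha\star(u\bullet_i\beta)$.

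I expect the only obstacle to be bookkeeping rather than any single deep point: one must check that each map occurring in $m_{\dd',\dd''}$ genuinely intertwines the relevant cohomology-module structures --- in particular that $\pi_2$ is proper, so the projection formula is available in vanishing-cycle cohomology --- and that the classes $u_{(i)}$ on the various stacks of the diagram are matched correctly under pullback, which is precisely the content of the determinant-additivity identity above. Granting the toolkit of \S\ref{toolkit}, which already records compatibility of $q^{\star}$ and of proper $p_{\star}$ with $\HO(-,\BQ)$-actions, the argument is routine.
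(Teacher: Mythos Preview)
Your proof is correct and follows essentially the same approach as the paper: the paper packages the argument in a single commutative diagram expressing that $\Det_{(i)}\circ\pi_2$ equals $\otimes\circ(\Det_{(i)}\times\Det_{(i)})\circ(\pi_1\times\pi_3)$, then invokes $\otimes^*(u)=u_1\otimes 1+1\otimes u_2$ together with compatibility of $(\pi_1\times\pi_3)^{\star}$ and $\pi_{2,\star}$ with the $\HO_{\BC^*_u}$-action. Your version spells out the same content more explicitly (including the role of $s^{\star}$ and $\TS$, which the paper's proof elides), but the core idea---additivity of determinants plus the projection formula---is identical.
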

\begin{proof}
Recall that the bracketed superscript $\psi$ in the statement of the proposition means that we claim the statement is true for both the algebras $\CoHA^{T}_{Q,W}$ and $\CoHA^{T,\psi}_{Q,W}$.  We first prove the version without the sign-twist, i.e. for the multiplication in $\CoHA^{T}_{Q,W}$.  

Fix $\dd',\dd''\in\BN^{Q_0}$ and set $\dd=\dd'+\dd''$.  Consider the following commutative diagram
\[
\xymatrix{
\ar[d]^{\Det_{(i)}\times\Det_{(i)}}\Mst^T_{\dd'}(Q)\times_{\B T}\Mst^T_{\dd''}(Q)&&\ar[ll]_-{\pi_1\times \pi_3}\Mst^T_{\dd',\dd''}(Q)\ar[rr]^{\pi_2}&&\Mst^T_{\dd}(Q)\ar[d]^{\Det_{(i)}}\\
\B \BC^*_u\times\B \BC^*_u\ar[rrrr]^-{\otimes}&&&&\B \BC^*_u.
}
\]
The morphisms $(\pi_1\times \pi_3)^{\star}$ and $\pi_{2,\star}$ respect the $\HO_{\BC^*_u}$-actions induced by the morphism to $\B \BC^*_u$ in the above diagram.  On the other hand, under the morphism
\[
\BQ[u]=\HO(\B \BC^*_u,\BQ)\xrightarrow{\otimes^{\star}} \BQ[u_1,u_2]=\HO(\B \BC^*_u\times\B \BC^*_u,\BQ)
\]
we have 
\begin{equation}
\label{DQ}
\otimes^{\star}\!(u)=u_1+ u_2.
\end{equation}
So 
\begin{align*}
u\bullet_i (m(\alpha\otimes \beta))=&m\left((\Det_{(i)}\otimes \Det_{(i)})^{\star}((u_1\otimes 1)+(1\otimes u_2))\cdot (\alpha\otimes \beta)\right)\\
=&m( ((u\bullet_i\alpha)\otimes \beta))+m((\alpha\otimes (u\bullet_i\beta))).
\end{align*}
If the multiplication is instead calculated in $\CoHA^{T,{\psi}}_{Q,W}$, all of the terms in the above expression are multiplied by $(-1)^{\psi(\dd',\dd'')}$, and so equality also holds in $\CoHA^{T,{\psi}}_{Q,W}$.
\end{proof}
\subsection{Lowering operators}
The morphism $\otimes^{\star}$ in the proof of Proposition \ref{Aproof} makes $\HO_{\BC^*_u}$ into a cocommutative coalgebra.  Let $\mathcal{L}$ be a line bundle on a scheme $X$, and let $\CF$ be a flat family of $\dd$-dimensional $\BC Q$-modules on $X$.  Then $\mathcal{L}\otimes \CF$ is a flat family of $\dd$-dimensional $\BC Q$-modules, where the $\BC Q$-action is induced by the given action on $\CF$ and the natural isomorphism $\mathcal{E}nd_X(\CL\otimes \CF,\CL\otimes\CF)\cong \CE nd_X(\CF,\CF)$.  Recall that an $X$-point of $\Mst_{\dd}^T(Q)$ is defined by a $T$-torsor $p\colon E\rightarrow X$ along with a flat family $\CF$ of $\dd$-dimensional $\BC Q$-modules on $E$ compatible with the $T$-action.  We define a morphism
\begin{align*}
A\colon &\B \BC^*_u\times \Mst^T_{\dd}(Q')\rightarrow \Mst^T_{\dd}(Q)\\
&(\CL, (E\xrightarrow{p} X,\CF))\mapsto (E\xrightarrow{p} X,p^*\CL\otimes \CF)
\end{align*}
which makes $\Mst^T_{\dd}(Q)$ into a module over the algebra object $\B \BC^*_u$ in stacks.  We obtain an induced morphism in vanishing cycle cohomology
\[
A^{\star}\colon \HO(\Mst^T_{\dd}(Q),\phip{\Tr(W)})\rightarrow\HO_{\BC^*_u}\otimes \HO(\Mst^T_{\dd}(Q),\phip{\Tr(W)})
\]
making $\CoHA^T_{Q,W,\dd}$ into a comodule for the coalgebra $\HO_{\BC^*_u}$.  Via our fixed identification $\HO_{\BC^*_u}=\BQ[u]$, for $\alpha\in \CoHA_{Q,W}$, we define coefficients $\alpha_n$ via the expansion
\begin{equation}
\label{Adef}
A^{\star}(\alpha)=\sum_{n\geq 0}u^n\otimes \alpha_n.
\end{equation}
\begin{proposition}
\label{lProp}
The morphism
\begin{align*}
\partial_u\colon &\CoHA^{T,(\psi)}_{Q,W}\rightarrow \CoHA^{T,(\psi)}_{Q,W}\\
&\alpha\mapsto \alpha_1
\end{align*}
defines a derivation of $\CoHA^{T,(\psi)}_{Q,W}$, where $\alpha_1$ is as defined in \eqref{Adef}.
\end{proposition}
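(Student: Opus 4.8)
The plan is to run the argument of the proof of Proposition~\ref{Aproof}, with the action morphism $A$ playing the role of the determinant morphisms to $\B \BC^*_u$ used there. The one geometric input needed is the compatibility of $A$ with the convolution diagram: tensoring a flag $0\subset \CF'\subset \CF$ of flat families of $\BC Q'$-modules by a line bundle $\CL$ yields the flag $0\subset \CL\otimes \CF'\subset \CL\otimes \CF$, whose subquotient is $\CL\otimes(\CF/\CF')$ --- that is, sub, total and quotient are all twisted by the \emph{same} $\CL$. Writing $A'$ for the induced $\B \BC^*_u$-action on $\Mst^T_{\dd',\dd''}(Q')$ (twisting the entire flag) and $\Delta\colon \B \BC^*_u\to \B \BC^*_u\times \B \BC^*_u$ for the diagonal, this is encoded in the strictly commuting diagram
\[
\xymatrix{
\B \BC^*_u\times\left(\Mst^T_{\dd'}(Q')\times_{\B T}\Mst^T_{\dd''}(Q')\right)\ar[d]_-{B}&\ar[l]_-{\id\times(\pi_1\times\pi_3)}\B \BC^*_u\times\Mst^T_{\dd',\dd''}(Q')\ar[d]_-{A'}\ar[r]^-{\id\times\pi_2}&\B \BC^*_u\times\Mst^T_{\dd}(Q')\ar[d]^-{A}\\
\Mst^T_{\dd'}(Q')\times_{\B T}\Mst^T_{\dd''}(Q')&\ar[l]_-{\pi_1\times\pi_3}\Mst^T_{\dd',\dd''}(Q')\ar[r]^-{\pi_2}&\Mst^T_{\dd}(Q')
}
\]
in which $B=(A\times_{\B T}A)\circ(\Delta\times\id)$. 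As a first step I would write down this diagram carefully, and also record the routine compatibilities of $A$ with the morphism $s$ of \eqref{shfn} and with the Thom--Sebastiani isomorphism used in the definition of $m$; these are formal, using naturality of Thom--Sebastiani and the fact that tensoring by a line bundle leaves $\Tr(W)$ unchanged (so that $A^\star$ is defined on vanishing cycle cohomology in the first place).

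Next I would observe that the right-hand square is Cartesian up to the evident automorphism $(\CL,\CF)\mapsto(\CL,\CL\otimes \CF)$ of $\B \BC^*_u\times\Mst^T_{\bullet}(Q')$, and that $\id\times\pi_2$ is proper, so base change in vanishing cycle cohomology gives $A^{\star}\circ \pi_{2,\star}=(\id\times\pi_2)_{\star}\circ A'^{\star}$; functoriality of pullback gives $A'^{\star}\circ(\pi_1\times\pi_3)^{\star}=(\id\times(\pi_1\times\pi_3))^{\star}\circ B^{\star}$. Since $\id\times\pi_2$ and $\id\times(\pi_1\times\pi_3)$ are products of $\id_{\B \BC^*_u}$ with a morphism, the maps $(\id\times\pi_2)_{\star}$ and $(\id\times(\pi_1\times\pi_3))^{\star}$ are $\HO_{\BC^*_u}=\BQ[u]$-linear, so everything reduces to computing $B^{\star}$ on an external product. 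There, naturality of Thom--Sebastiani gives $(A\times_{\B T}A)^{\star}(\alpha\boxtimes\beta)=A^{\star}(\alpha)\boxtimes A^{\star}(\beta)=\left(\sum_{n}u_1^{n}\otimes\alpha_n\right)\boxtimes\left(\sum_{m}u_2^{m}\otimes\beta_m\right)$, and then $\Delta^{*}(u_1)=\Delta^{*}(u_2)=u$ yields $B^{\star}(\alpha\boxtimes\beta)=\sum_{n,m}u^{n+m}\otimes(\alpha_n\boxtimes\beta_m)$.

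Combining these identifications gives $A^{\star}(m(\alpha\otimes\beta))=\sum_{n,m}u^{n+m}\otimes m(\alpha_n\otimes\beta_m)$, and comparing the coefficient of $u^{k}$ with the defining expansion \eqref{Adef} applied to $m(\alpha\otimes\beta)$ gives $(m(\alpha\otimes\beta))_k=\sum_{n+m=k}m(\alpha_n\otimes\beta_m)$. Taking $k=1$ and using the counit axiom for the $\B \BC^*_u$-action (the trivial line bundle acts as the identity, whence $\alpha_0=\alpha$ and $\beta_0=\beta$) produces the Leibniz identity $(m(\alpha\otimes\beta))_1=m(\alpha_1\otimes\beta)+m(\alpha\otimes\beta_1)$, i.e. $\partial_u(m(\alpha\otimes\beta))=m(\partial_u\alpha\otimes\beta)+m(\alpha\otimes\partial_u\beta)$, as required. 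I expect the only genuine friction to be bookkeeping: checking that the squares above are Cartesian in the relevant $2$-categorical sense and that the construction passes cleanly through the $s$-morphism and Thom--Sebastiani isomorphisms appearing in the definition of $m$. The conceptual point --- that a flag and its sub and quotient are twisted by the \emph{same} line bundle, which is exactly why the diagonal $\Delta$ (with $\Delta^{*}u_i=u$) appears here in place of the tensor-product morphism (with $\otimes^{*}u=u_1\otimes 1+1\otimes u_2$) of Proposition~\ref{Aproof} --- is immediate, and is precisely what forces the convolution of $u$-expansions, hence the Leibniz rule.
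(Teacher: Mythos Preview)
Your proposal is correct and follows essentially the same approach as the paper's proof: the paper builds the identical commutative diagram (your $B$ and $A'$ are the paper's $a$ and $b$), factors $a=(A\times A)\circ(\Delta\times\id)$, and reads off the Leibniz rule from $\Delta^{\star}(p(u_1,u_2))=p(u,u)$; it also records the compatibility with $s^{\star}$ via \eqref{shii}, just as you flag. Your write-up is slightly more thorough (you extract the full convolution formula $(m(\alpha\otimes\beta))_k=\sum_{n+m=k}m(\alpha_n\otimes\beta_m)$ and invoke the counit axiom to justify $\alpha_0=\alpha$), but there is no substantive difference in strategy.
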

\begin{proof}
We abbreviate $\Mst^T_{\dd}=\Mst^T_{\dd}(Q)$ etc. to reduce clutter.  As in the proof of Proposition \ref{Aproof}, it is sufficient to prove the statement for $\CoHA^{T}_{Q,W}$. Consider the commutative diagram
\[
\xymatrix{
\B\BC^*_u\times\Mst^T_{\dd'}\times_{\B T}\Mst^T_{\dd''}\ar[d]^a&\ar[l]\B \BC^*_u\times \Mst^T_{\dd',\dd''}\ar[r]\ar[d]^b&\B\BC^*_u\times \Mst^T_{\dd}\ar[d]^A\\
\Mst^T_{\dd'}\times_{\B T}\Mst^T_{\dd''}&\ar[l]\Mst^T_{\dd',\dd''}\ar[r]&\Mst^T_{\dd}
}
\]
with morphisms of points over a scheme $X$ defined by
\begin{align*}
a\colon &(\CL,(E\xrightarrow{p} X,\CF',\CF''))\mapsto (E\xrightarrow{p} X,p^*\CL\otimes \CF',p^*\CL\otimes \CF'')\\
b\colon &(\CL, (E\xrightarrow{p} X,\CF'\rightarrow \CF\rightarrow \CF''))\mapsto (E\xrightarrow{p} X, (p^*\CL\otimes \CF'\rightarrow p^*\CL\otimes \CF\rightarrow p^*\CL\otimes \CF'')).
\end{align*}
This induces the commutative diagram
\[
\xymatrix{
\HO_{\BC^*_u}\otimes \HO(\Mst^T_{\dd'}\times_{\B T} \Mst^T_{\dd''},\phip{\Tr(W)})\ar[rr]^-{\id\otimes m'_{\dd',\dd''}}&&\HO_{\BC^*_u}\otimes \HO(\Mst^T_{\dd},\phip{\Tr(W)})
\\
\ar[u]^{a^{\star}}\HO(\Mst^T_{\dd'}\times_{\B T}\Mst^T_{\dd''},\phip{\Tr(W)})\ar[rr]^-{m'_{\dd',\dd''}}&&\HO(\Mst^T_{\dd},\phip{\Tr(W)})\ar[u]^{A^{\star}}
}
\]
with $m'_{\dd',\dd''}$ as in \eqref{mdashdef}, from which we deduce that 
\[
(m'_{\dd',\dd''}(\alpha\otimes \beta))_1=m'_{\dd',\dd''}((\alpha\otimes \beta)_1)
\]
with $(\alpha\otimes \beta)_1$ defined via $a^*(\alpha\otimes \beta)=\sum_{n\geq 0} u^n\otimes (\alpha\otimes \beta)_n$.  Here we have also used that the pullback $\jmath^{\star}$ commutes with the $\HO_{\BC_u^*}$-action.  On the other hand $a$ factorises as 
\[
\B\BC_u^*\times\Mst^T_{\dd'}\times_{\B T}\Mst^T_{\dd''}\xrightarrow{\Delta\times \id\times\id}\B\BC^*_u\times \B\BC^*_u\times \Mst^T_{\dd'}\times_{\B T}\Mst^T_{\dd''}\xrightarrow{A\times A} \Mst^T_{\dd'}\times_{\B T}\Mst^T_{\dd''}
\]
and so since $\Delta^{\star}(p(u_1,u_2))=p(u,u)$ the $u$ coefficient of $a^*(\alpha\otimes \beta)$ is equal to $\alpha_1\otimes \beta+\alpha\otimes\beta_1$.
\end{proof}

\begin{proposition}
\label{HeisAct}
For each $\dd\in\BN^{Q_0}$ there is an action of $\Heis_{Q}$ on $\CoHA^T_{Q,W,\dd}$ defined by letting $p_i$ act by $u\bullet_i$, and letting $q_{\Sigma}$ act by $\partial_u$.  This action has central charge $2\dd$.
\end{proposition}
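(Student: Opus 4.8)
The plan is to verify directly that, on the vector space $\CoHA^T_{Q',W,\dd}$, the operators $p_i\coloneqq u\bullet_i$ and $q_i\coloneqq\partial_u$, together with $c_i$ acting as multiplication by the scalar $\dd_i$, satisfy the defining relations of $\Heis_{Q'}$; for the present statement only the commutator identities are needed, not the derivation properties of Propositions \ref{Aproof} and \ref{lProp}. Three of the relations are essentially formal: $c_i$ is central since it is a scalar operator; $[p_i,p_j]=0$ because $u\bullet_i$ is cup product with the even-degree class $u_{(i)}\in\HO(\Mst^T_{\dd}(Q'),\BQ)$ and cup products of cohomology classes commute; and $[q_i,q_j]=0$ because the lowering operators all come from the single coaction $A^{\star}$ of Proposition \ref{lProp} --- more conceptually, $\partial_u$ is built from the twist of a family by a line bundle, such twists commute, and the induced coaction of the cocommutative coalgebra $\HO_{\BC^*_u}$ is cocommutative. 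So the entire content lies in the mixed relation $[q_i,p_j]=\delta_{ij}c_i$.

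To establish this I would run the same computation as in the proof of Proposition \ref{lProp}. Write $A^{\star}(\alpha)=\sum_{n\geq 0}u^n\otimes\alpha_n$ as in \eqref{Adef}, so that $q_i\alpha=\alpha_1$ and $\alpha_0=\alpha$. The one geometric input needed is the behaviour of the determinant morphism $\Det_{(j)}$ under the twist $A$: since $A$ replaces a family by its tensor product with (the pullback of) the universal line bundle $\CL$ on $\B\BC^*_u$, the $j$th tautological bundle of the twisted family is $\CL^{\otimes\dd_j}$ tensored with the original one, so $\Det_{(j)}\circ A$ factors as $\B\BC^*_u\times\Mst^T_{\dd}(Q')\rightarrow\B\BC^*_u\times\B\BC^*_u\xrightarrow{m}\B\BC^*_u$, the first arrow being the $\dd_j$th-power map on the left factor and $\Det_{(j)}$ on the right factor, and $m$ the group multiplication. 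Since $m^{*}u=u\otimes 1+1\otimes u$ and the $\dd_j$th-power map sends $u$ to $\dd_j u$, this yields $A^{*}(u_{(j)})=\dd_j\,u\otimes 1+1\otimes u_{(j)}$, the exact analogue of \eqref{DQ}. Now use that $A^{\star}$ is a morphism of $\HO(\Mst^T_{\dd}(Q'),\BQ)$-modules (see \S\ref{toolkit}): expanding $A^{\star}(u_{(j)}\cdot\alpha)=A^{*}(u_{(j)})\cdot A^{\star}(\alpha)$ and reading off the coefficient of $u$ gives $\partial_u(u_{(j)}\alpha)=\dd_j\alpha+u_{(j)}\,\partial_u(\alpha)$, i.e. $[q_i,p_j]$ acts as $\dd_j\cdot\id$, which agrees with the action of $c_i$. (When $Q'$ has several vertices one should, strictly, use in place of $\partial_u$ the lowering operator attached to the twist of the $i$th tautological bundle alone, so that $\Det_{(j)}$ is multiplied only by $\CL^{\otimes\delta_{ij}\dd_i}$ and $[q_i,p_j]=\delta_{ij}\dd_i\cdot\id$; the single-vertex case, which is all that is used in the rest of the paper, needs only the computation above.) Collecting the relations, the assignment $p_i\mapsto u\bullet_i$, $q_i\mapsto\partial_u$, $c_i\mapsto\dd_i\cdot\id$ defines the claimed $\Heis_{Q'}$-action, and it has central charge $\dd$ by construction.

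The only step with real content is the commutator $[q_i,p_j]$, and within it the single delicate point is the multiplicity factor $\dd_j$ in $A^{*}(u_{(j)})$, which records that the determinant of a $\dd_j$-dimensional summand acquires the $\dd_j$th tensor power of the twisting line bundle. Every remaining verification --- that the coaction and the cup-product module structure interchange correctly past the K\"unneth and Thom--Sebastiani isomorphisms and past the morphism $s$ of \eqref{shfn} --- is one of the compatibilities already used in Proposition \ref{lProp} and recalled in \S\ref{toolkit}, and I anticipate no obstacle there.
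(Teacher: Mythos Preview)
Your proposal is correct and follows essentially the same approach as the paper. Both proofs rest on the single geometric observation that under the twist map $A$ the determinant $\Det_{(j)}$ acquires a factor of $\CL^{\otimes \dd_j}$; the paper packages this as the composition $l_i\circ A$ and reads off $\partial_u(u\bullet_i\alpha)=\dd_i\alpha+u\bullet_i(\partial_u\alpha)$, while you phrase the same computation as $A^*(u_{(j)})=\dd_j\,u\otimes 1+1\otimes u_{(j)}$ and extract the $u$-coefficient --- the content is identical.

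Your write-up is in fact more thorough than the paper's: you explicitly dispose of the formal relations $[p_i,p_j]=[q_i,q_j]=0$ and the centrality of $c_i$, which the paper leaves unmentioned, and your parenthetical remark correctly flags a point the paper does not address. Namely, with \emph{all} $q_i$ acting by the single operator $\partial_u$, the computation gives $[q_i,p_j]=\dd_j\cdot\id$ regardless of $i$, whereas the direct-sum relations in $\Heis_{Q'}$ demand $[q_i,p_j]=\delta_{ij}\dd_i\cdot\id$. Your suggested fix (a separate lowering operator built from twisting only the $i$th tautological summand) is the natural one; and as you note, for the one-vertex quiver $Q^{(3)}$ used in the rest of the paper the issue does not arise.
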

\begin{proof}
Consider the composition 
\begin{align*}
&\B \BC^*_u\times \Mst^T_{\dd}(Q)\xrightarrow{A} \Mst^T_{\dd}(Q)\xrightarrow{l_i}\B \BC^*_u\times  \Mst^T_{\dd}(Q),
\end{align*}
which on $X$-points is given by
\begin{align*}
&(\CL,(E\xrightarrow{p} X,\CF))\mapsto (\CL^{\otimes \dd_i}\otimes\Det(\CF_i),(E\xrightarrow{p} X,p^*\CL\otimes \CF)).
\end{align*}
The induced endomorphism $F=(l_i\circ A)^{\star}$ of $\HO_{\BC^*_u}\otimes \HO(\Mst_{\dd}(Q),\phip{\Tr(W)})$ satisfies
\begin{align*}
F(u\otimes \alpha)=(\dd_i(u\otimes 1)+(1\otimes u) )\bullet_i A^{\star}(\alpha)
\end{align*}
from which we deduce that
\begin{align*}
\partial_u (u\bullet_i\alpha)=&(u\bullet_i \alpha)_1\\
=&\dd_i \alpha_0+u\bullet_i \alpha_1\\
=&\dd_i\alpha+u\bullet_i(\partial_u \alpha)
\end{align*}
as required.
\end{proof}
Combining Propositions \ref{Aproof}, \ref{lProp} and \ref{HeisAct} yields
\begin{theorem}
\label{Heis_lift}
Let $Q$ be symmetric, so that $\CoHA^{T,(\psi)}_{Q,W}$ is a cohomologically graded algebra.  Then $\CoHA^{T,(\psi)}_{Q,W}$ is an algebra object in the category $\Heis_{Q}\lmod_{\BZ}$, for which the underlying $\Heis_{Q}$-module is integrable.  The decomposition
\[
\CoHA^{T,(\psi)}_{Q,W}\cong \bigoplus_{\dd\in\BN^{Q_0}}(\CoHA^{T,(\psi)}_{Q,W})_{[2 \dd]}
\]
according to central charge of the $\Heis_{Q}$-action is given by the standard decomposition \eqref{undobj} according to dimension vector.
\end{theorem}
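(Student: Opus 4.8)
The plan is to assemble Propositions \ref{Aproof}, \ref{lProp} and \ref{HeisAct} into the asserted structure, using the characterisation recorded when $\Heis_Q$-actions on graded algebras were introduced: an $\BN^{Q'_0}$-graded, cohomologically graded algebra carrying an integrable $\Heis_{Q'}$-action whose central-charge decomposition refines the $\BN^{Q'_0}$-grading is an algebra object in $\Heis_{Q'}\lmod_{\BZ}$ exactly when each $p_i$ and each $q_i$ acts by a derivation of the multiplication. So the task is really just to check that the three propositions already proved in this section produce all the required ingredients, and that they glue along the $\BN^{Q'_0}$-grading.

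First I would observe that the operators $u\bullet_i$ (for $i\in Q'_0$) and $\partial_u$ are defined on all of $\CoHA^T_{Q',W}$ and each preserves the summand $\HO(\Mst^T_{\dd}(Q'),\phip{\Tr(W)})$, since they are built only from the $\HO_{\BC^*_u}$-module and comodule structures and do not involve the Hall product. By Proposition \ref{HeisAct}, on each such summand the assignment $p_i\mapsto u\bullet_i$, $q_i\mapsto \partial_u$, $c_i\mapsto \dd_i\cdot\id$ satisfies the defining relations of $\Heis_{Q'}$; because these operators are defined globally and diagonally with respect to the $\BN^{Q'_0}$-grading, the relations hold on all of $\CoHA^T_{Q',W}$. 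This gives the integrable $\Heis_{Q'}$-module structure, and by construction its decomposition according to central charge is exactly the decomposition \eqref{undobj} by dimension vector, with $c_i$ acting as the scalar $\dd_i$ on the $\dd$-graded piece. The $\BZ$-grading conventions also match: $u\bullet_i$ raises cohomological degree by $2$ and $\partial_u$ lowers it by $2$, consistent with placing $p_i,q_i$ in cohomological degrees $2,-2$.

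Next I would invoke Proposition \ref{Aproof}, which is precisely the statement that each $p_i=u\bullet_i$ acts by a derivation of the Hall product, together with Proposition \ref{lProp}, which gives the same for $q_i=\partial_u$. Since the Hall multiplication is a morphism of cohomologically graded vector spaces, the criterion above applies and $\CoHA^T_{Q',W}$ becomes an algebra object in $\Heis_{Q'}\lmod_{\BZ}$, with the underlying module integrable and with the central-charge/dimension-vector identification as asserted.

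There is no serious obstacle: all the geometric content has been carried out in Propositions \ref{Aproof}--\ref{HeisAct}. The only point needing (mild) care is the glueing step --- that the $\Heis_{Q'}$-relations and the derivation identities, verified dimension-vector by dimension-vector, do assemble into statements on the full bigraded algebra. This is immediate once one notes that $u\bullet_i$ and $\partial_u$ act diagonally on the $\BN^{Q'_0}$-grading, and that the Leibniz identities of Propositions \ref{Aproof} and \ref{lProp} were already established for arbitrary pairs $\dd',\dd''$ rather than for a fixed total dimension vector.
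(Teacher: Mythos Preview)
Your proposal is correct and matches the paper's approach exactly: the paper's entire proof is the single line ``Combining Propositions \ref{Aproof}, \ref{lProp} and \ref{HeisAct} yields'', and you have simply written out what that combination means, invoking the criterion (recorded in the paper's discussion of $\Heis_Q$-algebra objects) that it suffices for each $p_i$ and $q_i$ to act by derivations. There is nothing to add.
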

\begin{proposition}
Let $Q$ be a symmetric quiver with potential $W\in \BC Q/[\BC Q,\BC Q]_{\mathrm{vect}}$.  The inclusion $\fg^T_{Q,W}\otimes \HO_{\BC^*_u}\hookrightarrow \CoHA^{T,\psi}_{Q,W}$ is an inclusion of $\BN^{Q_0}\oplus\BZ$-graded $\Heis_Q$-modules.
\end{proposition}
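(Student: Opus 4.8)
The plan is to reduce the entire statement to the single fact that the lowering operator $\partial_u$ annihilates $\fg^T_{Q',W}$, and then to prove that fact using the perverse filtration.

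First I would unwind the map: it sends $g\otimes p(u)$ to $p(u)\bullet g$, so its image is the $\HO_{\BC^*_u}$-submodule $V\coloneqq\HO_{\BC^*_u}\bullet\fg^T_{Q',W}\subseteq\CoHA^T_{Q',W}$ generated by $\fg^T_{Q',W}$ under the action of \S\ref{CoHAsec}. In the $\Heis$-action on $\CoHA^T_{Q',W}$ the generator $p$ acts by the total raising operator $u\bullet=\sum_{i\in Q'_0}u\bullet_i$ (Proposition \ref{Aproof}) and $q$ acts by $\partial_u$ (Proposition \ref{lProp}). Stability of $V$ under $p$ is immediate, since $u\bullet(p(u)\bullet g)=(u\,p(u))\bullet g\in V$. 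For $q$ I would use that $\partial_u$ and $u\bullet$ are derivations of $\CoHA^T_{Q',W}$ and that on the summand of degree $\dd$ one has $[\partial_u,u\bullet]=|\dd|\cdot\id$ with $|\dd|\coloneqq\sum_{i}\dd_i$, this being the sum over $i$ of the identity $\partial_u(u\bullet_i\alpha)=\dd_i\alpha+u\bullet_i\partial_u\alpha$ proved in Proposition \ref{HeisAct}. Inducting on $k$ then gives
\[
\partial_u\bigl(u^k\bullet g\bigr)=k\,|\dd|\,u^{k-1}\bullet g+u^k\bullet\partial_u g\qquad(g\in\fg^T_{Q',W,\dd}),
\]
which lies in $V$ as soon as $\partial_u g=0$; hence $\partial_u(V)\subseteq V$, and the $\Heis$-module structure that $V$ then inherits is the evident one on $\fg^T_{Q',W}\otimes\HO_{\BC^*_u}$. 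The same formula gives injectivity of the inclusion: a relation $\sum_k u^k\bullet g_k=0$ may be assumed homogeneous of some degree $\dd\neq0$ (note $\fg^T_{Q',W,0}=0$), and then applying $\partial_u^K$ with $K$ the largest index having $g_K\neq0$, together with $\partial_u g_k=0$, forces $K!\,|\dd|^K g_K=0$, so $g_K=0$ by the injectivity of $\iota$ in Theorem \ref{Tinj}, a contradiction.

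It remains to prove $\partial_u(\fg^T_{Q',W})=0$. By definition $\fg^T_{Q',W,\dd}$ is the first step $\FP_1\CoHA^T_{Q',W,\dd}$ of the perverse filtration, while $\FP_0\CoHA^T_{Q',W,\dd}=0$ (the perverse truncation ${}^{\fp'}\!\tau^{\leq0}$ of the relevant complex vanishes, as in the proof of Theorem \ref{Tinj}); so it suffices to show that $\partial_u$ strictly lowers the perverse filtration. Concretely, I would prove that the comodule morphism $A^{\star}\colon\CoHA^T_{Q',W,\dd}\to\HO_{\BC^*_u}\otimes\CoHA^T_{Q',W,\dd}$ of \S\ref{HeAsec} satisfies $A^{\star}(\FP_i)\subseteq\sum_{k\geq0}u^k\otimes\FP_{i-k}$ (with $\FP_j=0$ for $j<0$), so that extracting the coefficient of $u^1$ gives $\partial_u(\FP_1)\subseteq\FP_0=0$. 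For this I would observe that the line-bundle-tensoring morphism $A\colon\B\BC^*_u\times\Mst^T_{\dd}(Q')\to\Mst^T_{\dd}(Q')$ commutes with $\JH^T$ up to a morphism $\overline{A}\colon\B\BC^*_u\times\Msp^T_{\dd}(Q')\to\Msp^T_{\dd}(Q')$ of coarse moduli spaces, since tensoring a representation by a line bundle commutes with semisimplification; hence $A^{\star}$ is computed by pullback along $\overline{A}$, and the inclusion becomes a statement about the perverse $t$-structure on $\Msp^T_{\dd}(Q')$. As in the proof of Theorem \ref{Tinj} this can be checked after pulling back along an open cover $U\to\B T$, whereupon it reduces to the non-equivariant assertion that the tautological operators interact with the perverse filtration in the Lefschetz-type way underlying the PBW theorem of \cite{QEAs}.

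The step I expect to be the main obstacle is precisely this last one: controlling $\partial_u$, equivalently the line-bundle comodule structure $A^{\star}$, with respect to the perverse filtration. Everything preceding it is formal once the Heisenberg relations of \S\ref{HeAsec} are available; the real work is to verify that tensoring by a line bundle descends correctly to $\Msp^T_{\dd}(Q')$ and that, after the reduction along an open cover of $\B T$, the resulting claim genuinely reduces to the one already established in \cite{QEAs}.
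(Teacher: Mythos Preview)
Your overall strategy matches the paper's: reduce to $\partial_u(\fg^T_{Q',W})=0$, then prove this via the interaction of $A$ with $\JH^T$. You also supply a clean direct argument for injectivity via the Heisenberg relations, which the paper does not spell out here.

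Where you diverge is in the last step, and you make it harder than necessary. Your observation that $A$ descends to a map $\overline{A}$ on coarse moduli is correct, but you stop short of the decisive point: since tensoring $\CF$ by a line bundle $\CL$ yields a $\BC Q'$-module \emph{isomorphic} to $\CF$ (by the very definition of the action, via $\CE nd_X(\CL\otimes\CF)\cong\CE nd_X(\CF)$), the induced map on isomorphism classes of semisimple representations is the identity. In other words $\JH^T\circ A=\JH^T\circ\pi_{\Mst^T_{\dd}(Q')}$, so your $\overline{A}$ is just the projection forgetting $\B\BC^*_u$. Once you have this, $\JH^T_*A_*(\BQ\boxtimes\phip{\Tr(W)}\BQ_{\vir})\cong\HO_{\BC^*_u}\otimes\JH^T_*\phip{\Tr(W)}\BQ_{\vir}$, and applying ${}^{\fp'}\!\tau^{\leq 1}$ together with the vanishing ${}^{\fp'}\!\tau^{\leq 0}=0$ immediately forces $A^{\star}(\alpha)=1\otimes\alpha$ for $\alpha\in\FP_1$, hence $\alpha_1=0$. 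No appeal to a Lefschetz-type result from \cite{QEAs} or an open-cover reduction is needed; the commutative triangle already collapses the $\B\BC^*_u$-direction over the coarse space. Your proposed inclusion $A^{\star}(\FP_i)\subseteq\sum_k u^k\otimes\FP_{i-k}$ is true but is a detour around this simpler fact.
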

\begin{proof}
Since the image of $\fg^T_{Q,W}\otimes \HO_{\BC^*_u}$ is just the closure of the image of $\fg^T_{Q,W}\subset \CoHA^{T,\psi}_{Q,W}$ under the raising operator $u\bullet$, the proposition follows from the claim that the lowering operator $\partial_u$ annihilates $\fg^T_{Q,W}$.  Consider the following diagram
\[
\xymatrix{
\B \BC^*_u\times\Mst_{\dd}^T(Q)\ar[rd]_{p\coloneqq \JH^T\circ \pi_{2}}\ar[r]^-A&\Mst^T_{\dd}(Q)\ar[d]^{\JH^T}\\
&\Msp^T_{\dd}(Q)
}
\]
where $\pi_2$ is projection onto $\Mst_{\dd}^T(Q)$.  We claim that the diagram commutes: this is easy to see at the level of geometric points, from which the claim follows.  Thus there is a natural isomorphism 
\[
\JH^T_*A_*(\BQ\boxtimes \phip{\Tr(W)}\BQ^{\vir})\cong \HO(\B \BC^*_u,\BQ)\otimes \JH^T_*\phip{\Tr(W)}\BQ^{\vir}
\]
and so passing to the first perverse truncation and using the perverse vanishing result \eqref{Pvanishing} the adjunction morphism
\[
 {}^{\fp'}\!\!\tau^{\leq 1}\JH^T_*\phip{\Tr(W)}\BQ^{\vir}\rightarrow {}^{\fp'}\!\!\tau^{\leq 1}\JH^T_*A_*(\BQ\boxtimes \phip{\Tr(W)}\BQ^{\vir})
\]
is an isomorphism.  It follows that the morphism
\[
{}^{\fp'}\!\!\tau^{\leq 1}\JH^T_*\left(\phip{\Tr(W)}\BQ^{\vir}\rightarrow A_*(\BQ_{\B \BC^*_u}\boxtimes \phip{\Tr(W)}\BQ^{\vir})\right)
\]
factors through the morphism
\[
\HO^0(\B \BC^*_u,\BQ)\otimes {}^{\fp'}\!\tau^{\leq 1}\JH_*^T\phip{\Tr(W)}\BQ^{\vir}\rightarrow \HO(\B \BC^*_u,\BQ)\otimes {}^{\fp'}\!\tau^{\leq 1}\JH_*^T\phip{\Tr(W)}\BQ^{\vir}
\]
and $A^{\star}(\alpha)=1\otimes \alpha$ for $\alpha\in\FP_1\!\CoHA^{T,\psi}_{Q,W}$.  As such, $\alpha_1=0$, as required.
\end{proof}

\section{Proofs of main results}
\label{final_sec}
Let $Q=Q^{(1)}$ be the Jordan quiver, with one vertex and one loop, so $\tilde{Q}=Q^{(3)}$ is the quiver with three loops.  Then for every $q=p^n$ and every $d\geq 1$ there are $q$ absolutely indecomposable $d$-dimensional $\BF_q Q$-modules, so $\kac_{Q,d}(q)=q$, and
\begin{equation}
\label{BPSdim}
\fg_{Q^{(3)},\tilde{W},d}\cong \BQ[2]
\end{equation}
by \eqref{KactoDT}.  We define
\[
\fg_{\BA^2}^T\coloneqq \fg_{Q^{(3)},\tilde{W}}^T;\quad \quad\hat{\fg}_{\BA^2}^T\coloneqq \hat{\fg}_{Q^{(3)},\tilde{W}}^T
\]
the deformed and the deformed affinized BPS Lie algebras for the pair $(Q^{(3)},\tilde{W})$, respectively.  For $d\in\BN$, let $\alpha_{d}\in \fg^T_{\BA^2,d'}$ and $\alpha^{(0)}_{d}\in \hat{\fg}_{\BA^2}^T$ be the unique (up to scalar) nonzero elements of cohomological degree $-2$.
\begin{proposition}
Let the torus $T=\Hom_{\mathrm{Grp}}(\BZ,(\BC^*)^r)$ act on the quiver $Q^{(3)}$, with $\tilde{W}$ $T$-invariant.  Then the Lie bracket on $\fg^T_{\BA^2}$ vanishes.
\end{proposition}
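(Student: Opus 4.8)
The plan is to deduce the vanishing from a cohomological degree count, after using the deformed PBW picture to reduce the question to the value of the bracket on a single $\HO_T$-module generator of each $\BN$-graded summand.

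First I would record the cohomological shape of $\fg^T_{\BA^2}$. By \eqref{BPSdim} (equivalently \eqref{KactoDT}, since $\kac_{Q^{(1)},d}(q)=q$) the undeformed space $\fg_{Q^{(3)},\tilde{W},d}$ is one-dimensional and concentrated in cohomological degree $-2$ for every $d\geq 1$, and is zero for $d=0$. The mixed Hodge structure on $\CoHA_{\tilde{Q},\tilde{W}}$ is pure by \cite{preproj}, hence so is $\CoHA^T_{\tilde{Q},\tilde{W}}\cong\CoHA_{\tilde{Q},\tilde{W}}\otimes\HO_T$ (eq.\ \eqref{TZ}), and therefore so is the subspace $\fg^T_{\BA^2}$. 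Consequently Theorem \ref{Tinj} applies: it provides an isomorphism of cohomologically graded vector spaces $\fg^T_{\BA^2}\cong\fg_{Q^{(3)},\tilde{W}}\otimes\HO_T$, and it shows that $\fg^T_{\BA^2}\subset\CoHA^T_{\BA^2}$ is closed under the commutator bracket. Since $\HO_T$ lives in non-negative even degrees, $\CoHA^T_{\BA^2}$ is free, in particular torsion free, over $\HO_T$ by \eqref{TZ}, and $\fg^T_{\BA^2,d}$ is an $\HO_T$-submodule; a graded-dimension count therefore shows that, for each $d\geq 1$, the module $\fg^T_{\BA^2,d}$ is free of rank one over $\HO_T$, generated by an element $g_d$ of cohomological degree $-2$, and in particular is concentrated in cohomological degrees $\geq -2$.

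Next I would invoke two features of the commutator bracket on $\CoHA^T_{\BA^2}$: it is $\HO_T$-bilinear, because $\HO_T$ is central in the $\HO_T$-algebra $\CoHA^T_{\BA^2}$; and it is homogeneous of cohomological degree zero, the Hall product being strictly compatible with the cohomological grading for the symmetric quiver $Q^{(3)}$ (cf.\ \cite{KS2}). Hence, for all $d',d''\geq 1$, the subspace $[\fg^T_{\BA^2,d'},\fg^T_{\BA^2,d''}]\subseteq\fg^T_{\BA^2,d'+d''}$ is the $\HO_T$-span of the single element $[g_{d'},g_{d''}]$, which lies in cohomological degree $(-2)+(-2)=-4$. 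Since $\fg^T_{\BA^2,d'+d''}$ vanishes in all degrees below $-2$, we get $[g_{d'},g_{d''}]=0$, so the bracket on $\fg^T_{\BA^2}$ is identically zero.

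The real content is in the second paragraph: what makes the degree count decisive is that $\fg_{Q^{(3)},\tilde{W},d}$ is one-dimensional in a single cohomological degree --- a feature special to the Jordan quiver, which fails for a general quiver, where the BPS bracket is genuinely nontrivial --- together with the fact that passing to the $T$-equivariant theory can only enlarge the range of occupied degrees upwards. The two steps that need genuine care are the purity of $\fg^T_{\BA^2}$ (needed to apply Theorem \ref{Tinj}, hence to pin down the $\HO_T$-module structure), which I would extract from the purity theorem of \cite{preproj} as above, and the strict compatibility of the Hall product with the cohomological grading; neither is a serious obstacle.
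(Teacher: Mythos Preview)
Your proof is correct and follows exactly the same route as the paper's: identify $\fg^T_{\BA^2,d}\cong\HO_T[2]$ via \eqref{BPSdim} and Theorem \ref{Tinj}, observe that the bracket of the degree $-2$ generators lands in degree $-4$ where the target vanishes, and conclude by $\HO_T$-bilinearity. If anything you are more careful than the paper in explicitly justifying the purity hypothesis of Theorem \ref{Tinj} before invoking it.
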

\begin{proof}
By \eqref{BPSdim} and Theorem \ref{Tinj} there is an isomorphism of cohomologically graded vector spaces
\[
\fg^T_{\BA^2,d}\cong \HO_T[2]
\]
for every $d\geq 1$.  Then 
\[
[\alpha_{d'},\alpha_{d''}]\in \fg^T_{\BA^2,d'+d''}
\]
has cohomological degree $-4$, and so is zero.  For arbitrary $p(\mathbf{t}),q(\mathbf{t})\in\HO_T$ it follows that
\[
[p(\mathbf{t})\cdotsh\alpha_{d'},q(\mathbf{t})\cdotsh\alpha_{d''}]=p(\mathbf{t})q(\mathbf{t})\cdotsh[\alpha_{d'},\alpha_{d''}]=0.
\]
\end{proof}

By \eqref{BPSdim} we have the equality of generating series
\begin{align}
\nonumber\chi(\hat{\fg}_{\BA^2})\coloneqq &\sum_{i,j\in \BZ}\dim(\hat{\fg}_{\BA^2,i}^j)v^iq^j\\
=&q^{-2}v(1-v)^{-1}(1-q^2)^{-1}.\label{gen_serg}
\end{align}
From the isomorphism $\hat{\fg}_{\BA^2}^{\BC^*}\cong \hat{\fg}_{\BA^2}\otimes \HO_{\BC^*}$ (Proposition \ref{Tg}) we deduce the equality
\begin{equation}
\chi(\hat{\fg}^{\BC^*}_{\BA^2})=q^{-2}v(1-v)^{-1}(1-q^2)^{-2}.\label{gen_serg2}
\end{equation}

We continue to denote by $\alpha_{d}^{(0)}$ a basis element of $\hat{\fg}_{\BA^2,d}^{-2}$, and set $\alpha_{d}^{(m)}\coloneqq u^m\bullet \alpha_d^{(0)}$.  Then the elements $\alpha_d^{(m)}$ with $d\geq 1$ and $m\geq 0$ form a basis for $\hat{\fg}_{\BA^2}$.
\begin{proposition}
\label{LehnWork}
For every $d\geq 1$ we have an inequality
\begin{equation}
\label{ineqr}
[\alpha_1^{(1)},\alpha_{d}^{(0)}]= \lambda_d \alpha_{d+1}^{(0)}
\end{equation}
for some nonzero $\lambda_{d}\in\BQ$.
\end{proposition}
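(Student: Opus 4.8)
The plan is to transport the bracket to the action of $\CoHA_{\tilde Q,\tilde W}$ on the cohomology of Hilbert schemes of points on $\BA^2$, where it is evaluated by Lehn's commutator formula. Take $Q=Q^{(1)}$, the one-loop quiver, and framing vector $\ff=1$, so that $\CN_{1,n}(Q^{(1)})\cong\Hilb_n(\BA^2)$; set $V\coloneqq M_{1}(Q^{(1)})=\bigoplus_{n\geq 0}\HO(\CN_{1,n}(Q^{(1)}),\BQ_{\vir})$, the total (shifted) cohomology of the Hilbert schemes. By Proposition \ref{comp_Nak} the operation $\star$ makes $V$ a module over the algebra $\CoHA_{\tilde Q,\tilde W}$; let $\rho\colon\CoHA_{\tilde Q,\tilde W}\to\End(V)$ denote this action. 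Being an algebra morphism, $\rho$ carries commutators to commutators, hence restricts to a Lie algebra morphism on the subalgebra $\hat\fg_{\BA^2}\subset\CoHA_{\tilde Q,\tilde W}$, which is closed under $[-,-]$ by Proposition \ref{UEA_prop}. Normalising the distinguished generator so that $\alpha_1^{(0)}$ equals the unit class $1_1\in\CoHA_{\tilde Q,\tilde W,1}\cong\BQ[x]$, Equation \eqref{basicfp} gives $\rho(\alpha_1^{(0)})=\fp_1$, and then the $\HO_{\BC^*_u}$-linearity \eqref{basicder} gives $\rho(\alpha_1^{(1)})=\rho(u\bullet\alpha_1^{(0)})=[u\bullet,\fp_1]=\fp'_1$.

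First I would note that the proportionality asserted in \eqref{ineqr} is automatic: by \eqref{BPSdim} the cohomologically graded Lie algebra $\hat\fg_{\BA^2}=\fg_{\BA^2}\otimes\HO_{\BC^*_u}$ has one-dimensional graded piece $\hat\fg_{\BA^2,d}^{-2}=\BQ\,\alpha_d^{(0)}$ for every $d\geq 1$, while $[\alpha_1^{(1)},\alpha_d^{(0)}]$ has dimension $d+1$ and cohomological degree $0+(-2)=-2$, so it lies in $\hat\fg_{\BA^2,d+1}^{-2}$ and therefore equals $\lambda_d\alpha_{d+1}^{(0)}$ for a unique $\lambda_d\in\BQ$. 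The entire content of the statement is thus the non-vanishing of $\lambda_d$.

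This I would establish by an induction on $d$ that simultaneously shows $\rho(\alpha_d^{(0)})=c_d\fp_d$ for some $c_d\in\BQ^{\times}$. The base case $d=1$ is $c_1=1$. For the inductive step, apply $\rho$ to $[\alpha_1^{(1)},\alpha_d^{(0)}]=\lambda_d\alpha_{d+1}^{(0)}$ and use $\rho(\alpha_1^{(1)})=\fp'_1$, the inductive hypothesis, and Lehn's Theorem \ref{LehnThm}:
\[
\lambda_d\,\rho(\alpha_{d+1}^{(0)})=[\fp'_1,\,c_d\fp_d]=-d\,c_d\,\fp_{d+1}.
\]
Since the creation operators $\fp_n$ with $n\geq 1$ are nonzero on $V$ (they generate a Heisenberg algebra acting irreducibly on the Fock space $V$; see Nakajima \cite{Nak97} and Grojnowski \cite{Groj96}), and $d\,c_d\neq 0$, the right-hand side is a nonzero operator. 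Hence $\lambda_d\neq 0$, and moreover $\rho(\alpha_{d+1}^{(0)})=-d\,c_d\,\lambda_d^{-1}\fp_{d+1}$, so we may take $c_{d+1}=-d\,c_d\,\lambda_d^{-1}\neq 0$; this closes the induction and proves the proposition. Note that this induction is not circular: the inductive step uses only $c_d\neq 0$ and deduces both $\lambda_d\neq 0$ and $c_{d+1}\neq 0$.

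There is no serious obstacle here: the genuine content is imported from Lehn's geometric computation through the action $\rho$, and the proof is essentially bookkeeping. The one point requiring care, rather than being purely formal, is the compatibility of conventions behind $\rho(\alpha_1^{(0)})=\fp_1$ — namely that the distinguished degree $-2$ class spanning $\fg_{\BA^2,1}$ is, up to a nonzero scalar, the unit class $1_1$ to which \eqref{basicfp} applies — together with the resulting identification $\rho(\alpha_1^{(1)})=\fp'_1$ via the $\HO_{\BC^*_u}$-linearity \eqref{basicder}. Granting this and the classical non-vanishing of Nakajima's creation operators, the argument is complete; in particular it does not use spherical generation of $\hat\fg_{\BA^2}$, which is deduced only later and partly relies on this proposition.
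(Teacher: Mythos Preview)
Your proof is correct and follows essentially the same approach as the paper: both transport the bracket to the Nakajima action on $\bigoplus_n\HO(\Hilb_n(\BA^2),\BQ_{\vir})$, identify $\rho(\alpha_1^{(0)})=\fp_1$ and $\rho(\alpha_1^{(1)})=\fp'_1$, and then invoke Lehn's commutator formula inductively together with the one-dimensionality of $\hat\fg_{\BA^2,d+1}^{-2}$. The only cosmetic difference is that you track the scalar $c_d$ in $\rho(\alpha_d^{(0)})=c_d\fp_d$ explicitly, whereas the paper phrases the induction in terms of the iterated adjoint $\mathbf{ad}_{\alpha_1^{(1)}}^{n}(\alpha_1^{(0)})$.
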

\begin{proof}
We consider the $\CoHA_{\BA^2}$-action on $M_{1}(Q)\cong\bigoplus_{d\in\BN}\HO(\Hilb_d(\BC^2),\BQ^{\vir})$ recalled in \S \ref{NakSec}.  We denote by $\rho\colon\hat{\fg}_{\BA^2}\rightarrow \mathrm{End}(M_1(Q))$ the action restricted to $\hat{\fg}_{\BA^2}$.  Then $\rho(\alpha_0^{(0)})=\fp_1$ is the action of the Nakajima raising operator and from 
\[
u\bullet (\alpha \star m)=(u\bullet \alpha)\star m+\alpha\star (u\bullet m)
\]
we deduce that 
\[
\rho(\alpha_1^{(1)}) =[u \bullet, \rho(\alpha_1^{(0)})]= \fp_1'.
\]
From Theorem \ref{LehnThm} we deduce inductively that 
\begin{equation}
\label{alphapin}
\rho\left(\mathbf{ad}_{\alpha_1^{(1)}}^n(\alpha_1^{(0)})\right)\neq 0.
\end{equation}
Set $\alpha=\mathbf{ad}_{\alpha_1^{(1)}}^d(\alpha_1^{(0)})\in \widehat{\fg}_{\BA^2,d+1}$.  By \eqref{alphapin}, $\alpha\neq 0$.  On the other hand, $\alpha_{d+1}^{(0)}$ is the unique nonzero element of $\widehat{\fg}_{\BA^2,d+1}$ of cohomological degree $-2$, which is the cohomological degree of $\alpha$, and the result follows by induction.
\end{proof}

\begin{corollary}
\label{cor1}
The affinized BPS Lie algebra $\hat{\fg}_{\BA^2}$ is generated by $\alpha_1^{(m)}$ for $m\geq 0$.
\end{corollary}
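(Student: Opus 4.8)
The plan is to set $\mathfrak{h}\subseteq\hat{\fg}_{\BA^2}$ to be the Lie subalgebra generated by $\{\alpha_1^{(m)}\mid m\geq 0\}$ and to show $\mathfrak{h}=\hat{\fg}_{\BA^2}$ by verifying that every basis element $\alpha_d^{(m)}$ (for $d\geq 1$, $m\geq 0$) lies in $\mathfrak{h}$. The first step handles the part of $\hat{\fg}_{\BA^2}$ in cohomological degree $-2$: I claim $\alpha_d^{(0)}\in\mathfrak{h}$ for all $d\geq 1$, by induction on $d$. The case $d=1$ is immediate, and for $d\geq 2$ Proposition \ref{LehnWork} (applied to $d-1\geq 1$) gives $\alpha_d^{(0)}=\lambda_{d-1}^{-1}[\alpha_1^{(1)},\alpha_{d-1}^{(0)}]$ with $\lambda_{d-1}\neq 0$; since $\alpha_1^{(1)}\in\mathfrak{h}$ and $\alpha_{d-1}^{(0)}\in\mathfrak{h}$ by the inductive hypothesis, this bracket lies in $\mathfrak{h}$.

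The second step bootstraps from degree $-2$ to all of $\hat{\fg}_{\BA^2}$ using the Heisenberg structure. By Proposition \ref{Aproof} the raising operator $u\bullet$ is a derivation of $\CoHA_{\BA^2}$; by Proposition \ref{UEA_prop} the subspace $\hat{\fg}_{\BA^2}$ is closed under the commutator bracket; and by the final proposition of \S\ref{HeAsec} the inclusion $\hat{\fg}_{\BA^2}=\fg_{\BA^2}\otimes\HO_{\BC^*_u}\hookrightarrow\CoHA_{\BA^2}$ is an inclusion of $\Heis$-modules, so $u\bullet$ preserves $\hat{\fg}_{\BA^2}$. Hence $u\bullet$ restricts to a derivation of the Lie algebra $\hat{\fg}_{\BA^2}$, and by definition $u^m\bullet\alpha_d^{(0)}=\alpha_d^{(m)}$. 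Applying the Leibniz rule to the identity $\alpha_d^{(0)}=\lambda_{d-1}^{-1}[\alpha_1^{(1)},\alpha_{d-1}^{(0)}]$ for $d\geq 2$ gives
\[
\alpha_d^{(m)}=\lambda_{d-1}^{-1}\sum_{k=0}^{m}\binom{m}{k}\bigl[\alpha_1^{(k+1)},\alpha_{d-1}^{(m-k)}\bigr].
\]
Every term on the right is a bracket of some $\alpha_1^{(k+1)}\in\mathfrak{h}$ with some $\alpha_{d-1}^{(m-k)}$, whose upper index $m-k$ is at most $m$. Thus a single induction on $d$, with inductive hypothesis ``$\alpha_{d-1}^{(m')}\in\mathfrak{h}$ for all $m'\geq 0$'' (the base case $d=1$ being trivial, as each $\alpha_1^{(m')}\in\mathfrak{h}$ by definition), yields $\alpha_d^{(m)}\in\mathfrak{h}$ for all $m\geq 0$. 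Since the $\alpha_d^{(m)}$ exhaust a basis of $\hat{\fg}_{\BA^2}$, we conclude $\mathfrak{h}=\hat{\fg}_{\BA^2}$.

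The only genuinely non-formal ingredient is Proposition \ref{LehnWork}, which itself rests on Lehn's theorem (Theorem \ref{LehnThm}) and the compatibility \eqref{basicder} of the raising operator with the Nakajima action; given that, the present corollary is pure bookkeeping with the derivation property of $u\bullet$, and I do not anticipate any real obstacle. (One could alternatively organize the second step through the universal property of Lemma \ref{univ_prop}, but the direct Leibniz computation above is shorter.)
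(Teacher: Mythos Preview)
Your proof is correct and follows essentially the same approach as the paper: both use Proposition \ref{LehnWork} to reach the degree $-2$ elements $\alpha_d^{(0)}$ by iterated brackets with $\alpha_1^{(1)}$, and then apply the Leibniz rule for the derivation $u\bullet$ to the identity $[\alpha_1^{(1)},\alpha_{d-1}^{(0)}]=\lambda_{d-1}\alpha_d^{(0)}$ to obtain $\alpha_d^{(m)}$ as a sum of brackets $[\alpha_1^{(k+1)},\alpha_{d-1}^{(m-k)}]$, concluding by induction on $d$. Your write-up is in fact a bit more careful than the paper's in justifying why $u\bullet$ restricts to a Lie derivation of $\hat{\fg}_{\BA^2}$.
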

\begin{proof}
We assume, for an inductive argument, that the Lie subalgebra $\fg'\subset \hat{\fg}_{\BA^2}$ generated by $\alpha_1^{(m)}$ for $m\geq 0$ contains all of the elements $\alpha_{d'}^{(m)}$ for $d'<d+1$ and $m\in \BN$.  From Proposition \ref{LehnWork} we deduce that, possibly after replacing $\alpha_{d+1}^{(0)}$ by itself multiplied by some $\lambda\in \BQ\setminus\{0\}$, we have the equality $[\alpha_1^{(1)},\alpha_{d}^{(0)}]=\alpha_{d+1}^{(0)}$.  Acting by $u^m\bullet$, we find
\[
\sum_{0\leq r\leq m}\binom{m}{r}[\alpha_1^{(1+r)},\alpha_{d}^{(m-r)}]=\alpha_{d+1}^{(m)}.
\]
\end{proof}
The point of the next theorem and corollary is that the structure of $\CoHA_{\BA^2}$ follows from dimension counting, spherical generation, its upgrade to an algebra object in $\Heis\lmod_{\BZ}$ (\S \ref{HeAsec}) and from the construction of the affinized BPS Lie algebra (\S \ref{CopSec}).
\begin{theorem}
\label{fder}
There is an isomorphism $F\colon \hat{\fg}_{\BA^2}\cong \Gr^F_{\bullet}\!W^+_{1+\infty}$.
\end{theorem}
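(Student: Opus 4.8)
The plan is to verify that $\hat\fg_{\BA^2}$ satisfies the hypotheses of Corollary \ref{gen_cor} (for the trivial quiver $Q^{(1)}$, so $\Heis_{Q^{(1)}}=\Heis$) and then invoke that corollary. Unpacked, this means: realise $\hat\fg_{\BA^2}$ as the universal object characterised by Lemma \ref{univ_prop}, obtain $F$ as the unique extension of an identification of the lowest central-charge piece, and deduce that $F$ is an isomorphism from surjectivity plus a Hilbert-series comparison.

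First I would set up the Heisenberg picture. By Proposition \ref{UEA_prop} the subspace $\hat\fg_{\BA^2}\subset\CoHA_{\BA^2}$ is closed under the commutator bracket, and by Theorem \ref{Heis_lift} together with the final Proposition of Section \ref{HeAsec} it is stable under $p=u\bullet$ and $q=\partial_u$, which act on it by Lie algebra derivations; hence $\hat\fg_{\BA^2}$ is a Lie algebra object in $\Heis\lmod_{\BZ}$, integrable with central-charge decomposition $\bigoplus_{d\geq 1}\hat\fg_{\BA^2,d}$. Using \eqref{BPSdim} and the tensor description $\hat\fg_{\BA^2,d}=\fg_{\BA^2,d}\otimes\HO_{\BC^*_u}$, each summand is a free rank-one $\HO_{\BC^*_u}$-module with generator $\alpha_d^{(0)}$ in cohomological degree $-2$, and $\partial_u$ annihilates $\alpha_d^{(0)}$ by the same Proposition of Section \ref{HeAsec}. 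Consequently, in the grading normalisation of Theorem \ref{Heis_lift}, each $\hat\fg_{\BA^2,d}$ is an unshifted copy of $\Gr^F_{\bullet}\!V_{2d}$ as a $\Heis$-module; in particular $\hat\fg_{\BA^2}$ contains only unshifted modules $\Gr^F_{\bullet}\!V_m$ as $\Heis$-subrepresentations, and there is a $\Heis$-module inclusion $\iota\colon \Gr^F_{\bullet}\!V_2\xrightarrow{\ \sim\ }\hat\fg_{\BA^2,1}\hookrightarrow\hat\fg_{\BA^2}$.

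With this in place, Lemma \ref{univ_prop} extends $\iota$ uniquely to a morphism of $\Heis$-Lie algebra objects $F\colon \Gr^F_{\bullet}\!W^+_{1+\infty}\to\hat\fg_{\BA^2}$; explicitly $F(zD^m)$ is a nonzero scalar multiple of $\alpha_1^{(m)}$, and one can cross-check the bracket rule \eqref{GrW} (for instance $[zD,z^d]=d\,z^{d+1}$) against Proposition \ref{LehnWork} ($[\alpha_1^{(1)},\alpha_d^{(0)}]=\lambda_d\alpha_{d+1}^{(0)}$, $\lambda_d\neq 0$). The image of $F$ is a Lie subalgebra of $\hat\fg_{\BA^2}$ containing $\hat\fg_{\BA^2,1}$, hence equals $\hat\fg_{\BA^2}$ by Corollary \ref{cor1}, so $F$ is surjective. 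For injectivity I would compare characteristic functions: by \eqref{gen_serg} and Corollary \ref{gen_cor}, after matching the central-charge grading, the source and target have the same bigraded dimensions, and these are finite in every bidegree, so the surjection $F$ is an isomorphism. (Equivalently: once the structural checks above are in hand, one simply applies Corollary \ref{gen_cor} with $\mathcal{H}=\hat\fg_{\BA^2}$.)

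The real weight of the argument sits in the surjectivity input, namely Corollary \ref{cor1}: spherical generation of the affine BPS Lie algebra is the only genuinely non-formal ingredient, resting on Lehn's commutation relation (Theorem \ref{LehnThm}) transported through the Nakajima quiver variety action of Section \ref{NakSec}. The remaining difficulty is organisational rather than conceptual: one must keep careful track of two gradings — central charge versus dimension vector, and the order-filtration index versus cohomological degree — so that the Hilbert series of $\hat\fg_{\BA^2}$ and of $\Gr^F_{\bullet}\!W^+_{1+\infty}$ coincide on the nose rather than merely up to a rescaling.
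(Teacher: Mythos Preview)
Your proposal is correct and follows essentially the same route as the paper's proof, which simply cites \eqref{gen_serg}, Theorem \ref{Heis_lift}, Corollary \ref{cor1}, and Corollary \ref{gen_cor}. You have unpacked the invocation of Corollary \ref{gen_cor} by going back to its proof (Lemma \ref{univ_prop} plus a dimension count) and verified the hypotheses more explicitly, but the logical structure is identical.
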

\begin{proof}
This is a corollary of \eqref{gen_serg}, Theorem \ref{Heis_lift}, Corollary \ref{cor1}, which states that $\hat{\fg}_{\BA^2}$ is generated by $\hat{\fg}_{\BA^2,1}=\Gr^F_{\bullet}\! V_{[2]}$, and Corollary \ref{gen_cor}.
\end{proof}
The next corollary follows from Proposition \ref{UEA_prop}.  The second statement in the corollary follows from Lemma \ref{SphGen}.
\begin{corollary}
\label{RH}
There is an isomorphism of algebras
\[
\CoHA_{\BA^2}\cong \UEA(\Gr^F_{\bullet}\!W^+_{1+\infty}).
\]
In particular, $\CoHA_{\BA^2}$ is spherically generated (i.e. it is generated by $\CoHA_{\BA^2,1}$).
\end{corollary}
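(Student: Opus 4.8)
The plan is to obtain this as a formal consequence of Theorem \ref{fder} together with the results already established. By the isomorphism of Hall algebras $\CoHA_{Q^{(3)},\tilde W}\cong\CoHA_{\BA^2}$ recalled in \S\ref{dimred_sec}, together with Proposition \ref{UEA_prop} applied to the tripled Jordan quiver $\tilde Q=Q^{(3)}$, there is an isomorphism of associative algebras $\CoHA_{\BC^2}\cong\UEA(\hat{\fg}_{\BA^2})$. Theorem \ref{fder} provides an isomorphism of Lie algebras $F\colon\hat{\fg}_{\BA^2}\xrightarrow{\sim}\Gr^F_{\bullet}\!W^+_{1+\infty}$; since the universal enveloping algebra construction is functorial, $F$ induces an isomorphism of associative algebras $\UEA(\hat{\fg}_{\BA^2})\xrightarrow{\sim}\UEA(\Gr^F_{\bullet}\!W^+_{1+\infty})$. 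Composing the two yields the claimed isomorphism $\CoHA_{\BC^2}\cong\UEA(\Gr^F_{\bullet}\!W^+_{1+\infty})$.

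For the second assertion I would first identify the length-one piece $\CoHA_{\BC^2,1}$. By the PBW isomorphism of Theorem \ref{IT}, in $\BN$-degree $1$ one has $\CoHA_{\BC^2,1}\cong\fg_{\BA^2,1}\otimes\HO_{\BC^*_u}=\hat{\fg}_{\BA^2,1}$, since all higher symmetric powers of $\fg_{\BA^2}\otimes\HO_{\BC^*_u}$ contribute only in $\BN$-degree $\geq 2$; under $F$ this piece is carried to $\Gr^F_{\bullet}\!V_2$. Corollary \ref{cor1} --- or, transported across the isomorphism, Lemma \ref{SphGen} --- asserts that $\hat{\fg}_{\BA^2}$ is generated as a Lie algebra by $\hat{\fg}_{\BA^2,1}$. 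Since $\UEA$ of a Lie algebra is generated as an associative algebra by any Lie-generating subset, $\CoHA_{\BC^2}=\UEA(\hat{\fg}_{\BA^2})$ is generated by $\CoHA_{\BC^2,1}$, i.e. it is spherically generated.

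As every ingredient is already in place, there is no genuine obstacle here; the single point that requires a line of justification is the identification $\CoHA_{\BC^2,1}=\hat{\fg}_{\BA^2,1}$, which rests on the one-dimensionality of $\fg_{\tilde Q,\tilde W,1}$ from \eqref{BPSdim}, so that in $\BN$-degree $1$ the PBW filtration collapses onto $\fg_{\BA^2,1}\otimes\HO_{\BC^*_u}$.
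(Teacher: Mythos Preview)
Your argument is correct and follows essentially the same route as the paper: the first isomorphism is obtained by combining Proposition \ref{UEA_prop} with Theorem \ref{fder}, and spherical generation is deduced from Lemma \ref{SphGen} (the paper cites only this lemma for the second claim, while you also invoke Corollary \ref{cor1} and spell out the identification $\CoHA_{\BC^2,1}=\hat{\fg}_{\BA^2,1}$, which the paper leaves implicit).
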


\smallbreak
Consider the torus $T=(\BC^*)^2$, acting by independently rescaling the two coordinates of $\BA^2$ (via the weighting $\tilde{\tau}$ of \eqref{ttaudef}), and the fully equivariant CoHA $\CoHA^T_{\BA^2}$.%  For arbitrary $T'$, we abuse notation by continuing to denote by $\alpha_d\in \hat{\fg}_{\mathbb{A}^2,d}^{T'}$ the unique (up to scalar) non-zero element of cohomological degree $-2$.
\begin{theorem}
\label{TSG}
If $T'$ is a torus acting on $Q^{(3)}$, leaving $\tilde{W}$ invariant, then $\CoHA^{T'}_{Q^{(3)},\tilde{W}}$ is spherically generated.  In particular, the algebra $\CoHA^T_{\BA^2}$ is spherically generated.
\end{theorem}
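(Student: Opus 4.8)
The plan is to bootstrap the equivariant statement off the non-equivariant one (Corollary \ref{RH}) via a graded Nakayama argument, using the freeness/purity isomorphism \eqref{TZ}. Fix a torus $T'$ acting on $Q^{(3)}$ with $\tilde{W}$ invariant, and let $\SS\subset\CoHA^{T'}_{Q^{(3)},\tilde{W}}$ denote the $\HO_{T'}$-subalgebra generated by the weight-one piece $\CoHA^{T'}_{Q^{(3)},\tilde{W},1}$. Since the Hall product is $\HO_{T'}$-linear, $\SS$ is an $\HO_{T'}$-submodule, and the quotient $M\coloneqq\CoHA^{T'}_{Q^{(3)},\tilde{W}}/\SS$ is a well-defined $\HO_{T'}$-module, graded both by the dimension $d\in\BN$ and by cohomological degree. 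The theorem is equivalent to $M=0$.

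Next I would invoke \eqref{TZ}, the $r'=0$ specialisation of Proposition \ref{pur_form}: $\CoHA^{T'}_{Q^{(3)},\tilde{W}}$ is free over $\HO_{T'}$, and the quotient map $\CoHA^{T'}_{Q^{(3)},\tilde{W}}\twoheadrightarrow\CoHA^{T'}_{Q^{(3)},\tilde{W}}/\fm_{T'}\cdot\CoHA^{T'}_{Q^{(3)},\tilde{W}}$ is an isomorphism of algebras onto $\CoHA_{Q^{(3)},\tilde{W}}\cong\CoHA_{\BA^2}$. Under this map $\CoHA^{T'}_{Q^{(3)},\tilde{W},1}$ surjects onto $\CoHA_{\BA^2,1}$, so the image of $\SS$ contains, hence equals, the subalgebra of $\CoHA_{\BA^2}$ generated by $\CoHA_{\BA^2,1}$; by Corollary \ref{RH} this is all of $\CoHA_{\BA^2}$. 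Consequently $\SS+\fm_{T'}\cdot\CoHA^{T'}_{Q^{(3)},\tilde{W}}=\CoHA^{T'}_{Q^{(3)},\tilde{W}}$, i.e. $M=\fm_{T'}M$; and since $\fm_{T'}$ sits in strictly positive cohomological degree and preserves $d$, this reads $M_d=\fm_{T'}M_d$ for every $d$.

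To finish, I would run Nakayama dimension by dimension. For fixed $d$, the space $\CoHA^{T'}_{Q^{(3)},\tilde{W},d}\cong\CoHA_{\BA^2,d}\otimes\HO_{T'}$ is bounded below in cohomological degree: by Corollary \ref{RH} it is spanned by products of at most $d$ elements of $\hat{\fg}_{\BA^2}$, each of cohomological degree $\geq-2$ by \eqref{gen_serg}, hence is concentrated in degrees $\geq -2d$, while $\HO_{T'}$ is non-negatively graded. A graded $\HO_{T'}$-module $M_d$ that is bounded below and satisfies $M_d=\fm_{T'}M_d$ must vanish, since its lowest nonzero homogeneous component cannot lie in $\fm_{T'}M_d$. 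Thus $M=0$, so $\CoHA^{T'}_{Q^{(3)},\tilde{W}}=\SS$ is spherically generated. For the final assertion, $T=(\BC^*)^2$ acting via $\tilde{\tau}$ leaves $\tilde{W}$ invariant, and $\CoHA^T_{\BC^2}\cong\CoHA^T_{Q^{(3)},\tilde{W}}$, so this is the special case just proved.

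The only genuine obstacle is securing the boundedness-below that licenses the graded Nakayama step, namely that for each dimension $d$ the cohomological grading on $\CoHA^{T'}_{Q^{(3)},\tilde{W},d}$ is bounded below. This is exactly why the argument is pinned to the pair $(Q^{(3)},\tilde{W})$ and to the prior identification of $\hat{\fg}_{\BA^2}$: for a general symmetric quiver with potential one would first need an analogous degree bound on the equivariant CoHA before this reduction could be applied.
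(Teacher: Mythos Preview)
Your proof is correct and rests on the same core idea as the paper's: reduce modulo $\fm_{T'}$ using the freeness isomorphism \eqref{TZ} and then invoke the non-equivariant spherical generation from Corollary~\ref{RH}. The execution differs, however. The paper builds an explicit $\HO_{T'}$-linear map
\[
\Phi\colon\Sym_{\HO_{T'}}\Bigl(\bigoplus_{d\geq 1}\BQ[u]\otimes\HO_{T'}\Bigr)\longrightarrow\CoHA^{T'}_{\BC^2}
\]
whose $d$th component sends $u^m$ to the iterated bracket $[\tilde{\alpha}_1^{(1)},-]^{d-1}(\tilde{\alpha}_1^{(m)})$, and then observes (using Lemma~\ref{Blawan} to identify the resulting basis on the non-equivariant side) that $\Phi$ is a map of free $\HO_{T'}$-modules which becomes an isomorphism after applying $\otimes_{\HO_{T'}}\HO_{T'}/\fm_{T'}$, hence is itself an isomorphism. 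Your argument replaces this construction with the abstract graded Nakayama step on the cokernel $M=\CoHA^{T'}/\SS$, which is cleaner and sidesteps the auxiliary Lemma~\ref{Blawan}. The trade-off is that the paper's route exhibits a concrete spherical $\HO_{T'}$-basis of $\CoHA^{T'}_{\BC^2}$, while yours yields only the existence statement.
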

\begin{proof}
For $d\geq 1$ we define a morphism of $\HO_{T'}$-modules
\begin{align*}
&\Psi_d\colon \BQ[u]\otimes \HO_{T'}\rightarrow \CoHA^{T'}_{\BA^2}\\
u^m\otimes t^l\mapsto&\begin{cases}
t^l\left([\alpha_1^{(1)},-]^{d-1}(\alpha_1^{(m)})\right)& \textrm{if }d-1<m\textrm{ or }m=0\\
t^l\left(([\alpha_1^{(1)},-]^{d-1-m}\circ [\alpha_1^{(0)},-]\circ [\alpha_1^{(1)},-]^{m-1})(\alpha_1^{(m+1)})\right)&\textrm{otherwise}
%(\mathbf{ad}_{zD})^{m-n}\circ \mathbf{ad}_{D}\circ (\mathbf{ad}_{zD})^{n-1}(zD^{n+1}) & \textrm{otherwise}
\end{cases}
\end{align*}
and define
\[
\Psi=\oplus_{d\geq 1} \Psi_d\colon \bigoplus_{d\geq  1} \BQ[u]\otimes \HO_{T'}\rightarrow \CoHA^{T'}_{\BA^2}.
\]
Finally we define
\begin{align*}
\Phi\colon \Sym_{\HO_{T'}}\!\left(\bigoplus_{d\geq 1} \BQ[u]\otimes \HO_{T'}\right)\rightarrow \CoHA^{T'}_{\BA^2}
\end{align*}
via $\Psi$ and the CoHA product on the target.  This is a morphism of free $\HO_{T'}$-modules that (via Lemma \ref{Blawan}, Corollary \ref{RH} and Proposition \ref{pur_form} \eqref{algeiso}) becomes an isomorphism after applying $\otimes_{\HO_{T'}}(\HO_{T'}/\fm_{T'})$, where $\fm_{T'}\subset \HO_{T'}$ is the maximal homogeneous ideal.  It follows that $\Phi$ is an isomorphism, and the algebra $\CoHA^{T'}_{\BA^2}$ is spherically generated.
\end{proof}

There is a partial converse to Theorem \ref{TSG}, provided by the next two propositions:
\begin{proposition}
Let $Q$ be a quiver without loops that is not an orientation of a finite type ADE Dynkin diagram, and let $T$ act on $\tilde{Q}$, leaving $\tilde{W}$ invariant.  Then $\CoHA^T_{\tilde{Q},\tilde{W}}$ is not spherically generated, i.e. it is not generated by the subspaces $\CoHA^T_{\tilde{Q},\tilde{W},1_i}$ for $i\in Q_0$.
\end{proposition}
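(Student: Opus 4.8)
The plan is to reduce to the case $T=\{1\}$ and then obtain a contradiction by tracking cohomological degrees. \emph{Reduction.} By Proposition~\ref{pur_form} (the case $r'=0$, i.e.\ \eqref{TZ}) there is an isomorphism of $\BN^{Q_0}$-graded algebras $\CoHA_{\tilde{Q},\tilde{W}}\cong\CoHA^T_{\tilde{Q},\tilde{W}}/\fm_T\cdot\CoHA^T_{\tilde{Q},\tilde{W}}$, and the quotient map carries the degree-$1_i$ piece $\CoHA^T_{\tilde{Q},\tilde{W},1_i}$ onto $\CoHA_{\tilde{Q},\tilde{W},1_i}$. Hence, if $\CoHA^T_{\tilde{Q},\tilde{W}}$ were generated by the subspaces $\CoHA^T_{\tilde{Q},\tilde{W},1_i}$, then $\CoHA_{\tilde{Q},\tilde{W}}$ would be generated by the $\CoHA_{\tilde{Q},\tilde{W},1_i}$, so it suffices to show that $\CoHA_{\tilde{Q},\tilde{W}}$ is not spherically generated.

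\emph{Degrees.} Since $Q$ has no loops, at dimension vector $1_i$ only the loop $\omega_i$ of $\tilde{Q}$ acts nontrivially, so $\Mst_{1_i}(\tilde{Q})\cong\BA^1\times\B\BC^*$, a zero-dimensional stack on which $\Tr(\tilde{W})$ vanishes; therefore $\CoHA_{\tilde{Q},\tilde{W},1_i}\cong\HO(\B\BC^*,\BQ)=\BQ[u]$ is concentrated in non-negative cohomological degrees. (Equivalently: $\kac_{Q,1_i}(q)=1$, so by the PBW theorem~\ref{IT} together with \eqref{KactoDT} one has $\CoHA_{\tilde{Q},\tilde{W},1_i}\cong\fg_{\tilde{Q},\tilde{W},1_i}\otimes\HO_{\BC^*_u}$ with $\fg_{\tilde{Q},\tilde{W},1_i}$ in cohomological degree $0$ and $u$ in cohomological degree $2$.) On the other hand, because $Q$ is not an orientation of a finite-type Dynkin diagram, $\kac_{Q,\dd_0}(q)$ is nonconstant for some $\dd_0$, say of degree $N\geq 1$; by \eqref{KactoDT} this forces $\fg^{-2N}_{\tilde{Q},\tilde{W},\dd_0}\neq 0$, so the cohomologically graded subspace $\fg_{\tilde{Q},\tilde{W}}=\FP^1\CoHA_{\tilde{Q},\tilde{W}}\subseteq\CoHA_{\tilde{Q},\tilde{W}}$ has nonzero elements in the strictly negative cohomological degree $-2N$.

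\emph{Conclusion.} The multiplication on $\CoHA_{\tilde{Q},\tilde{W}}$ is additive for the cohomological grading, so the subalgebra generated by $\bigoplus_{i\in Q_0}\CoHA_{\tilde{Q},\tilde{W},1_i}$ lives entirely in non-negative cohomological degrees and is therefore a proper subalgebra, which is the claim. The one input that deserves to be spelled out is exactly this last one: that the critical cohomological Hall algebra, normalised with $\BQ_{\vir}$ and with the vanishing-cycle functor taken perverse t-exact as in \S~\ref{toolkit}, is a cohomologically graded algebra. This is standard and is already used implicitly throughout \S~\ref{CopSec} and in the degree counts in the proofs of Proposition~\ref{LehnWork} and of the vanishing of the bracket on $\fg^T_{\BA^2}$. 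The only other mild point is that spherical generation descends along the quotient by $\fm_T$, which is immediate once one checks that the isomorphism of Proposition~\ref{pur_form} respects the dimension-vector grading; I do not expect a serious obstacle.
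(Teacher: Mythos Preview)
Your proof is correct and follows essentially the same cohomological-degree argument as the paper: the spherical generators sit in non-negative cohomological degree, while a nonconstant Kac polynomial forces a piece of $\CoHA_{\tilde{Q},\tilde{W}}$ (hence of $\CoHA^T_{\tilde{Q},\tilde{W}}$ via \eqref{TZ}) into strictly negative degree. The only cosmetic difference is that the paper works directly in the $T$-equivariant algebra (observing $\CoHA^T_{\tilde{Q},\tilde{W},1_i}\cong\HO_{\BC^*_u}\otimes\HO_T$ lies in non-negative degrees), whereas you first pass to the quotient by $\fm_T$ and argue non-equivariantly; both routes are immediate from Proposition~\ref{pur_form}.
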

\begin{proof}
This follows for cohomological degree reasons.  The conditions on $Q$ imply that $\kac_{Q,\dd}(q)$ is not a constant for some $\dd\in\BN^{Q_0}$.  From \eqref{TZ} and \eqref{KactoDT} we deduce that $\CoHA^T_{\tilde{Q},\tilde{W}}$ has a summand in strictly negative cohomological degree.  For each $i\in Q_0$, since there are no loops supported at $i$, there is an isomorphism
\[
\CoHA^T_{\tilde{Q},\tilde{W},1_i}\cong \HO_T
\]
as a cohomologically graded vector space.  So the sum of these spaces for $i\in Q_0$ generates an algebra lying in positive cohomological degrees.
\end{proof}
The next proposition is in the same vein.  Before we state it, we introduce some notation.  We label the three loops of $Q^{(3)}$ by the symbols $x,y,z$.  We let $\Mst^{\mathcal{SN}}(Q^{(3)})\subset\Mst(Q^{(3)})$ be the reduced closed substack, the points of which correspond to representations for which $z$ acts via a nilpotent operator, and let $\Mst^{\mathcal{N}}(Q^{(3)})\subset\Mst(Q^{(3)})$ be the reduced closed substack, the points of which correspond to representations for which both $y$ and $z$ act via nilpotent operators.  Then
\begin{align*}
\CoHA^{\heartsuit}_{Q^{(3)},\tilde{W}}\coloneqq \bigoplus_d\HO\!\left(\Mst_d^{\heartsuit}(Q^{(3)}),(\phip{\Tr(\tilde{W})}\BQ_{\Mst_d(Q^{(3)})}^{\vir})\lvert_{\Mst^{\heartsuit}(Q^{(3)})}\right)
\end{align*}
carry Hall algebra structures defined as in \S \ref{CoHAsec}, for $\heartsuit=\mathcal{SN},\mathcal{N}$.  If the torus $T'\cong (\mathbb{C}^*)^n$ acts on $\mathbb{A}^3$, preserving the 3-form $dx\wedge dy\wedge dz$ then there is again an induced action on $\CoHA^{T,\heartsuit}_{\mathbb{A}^2}\coloneqq \CoHA^{T,\heartsuit}_{Q^{(3)},\tilde{W}}$.  By the PBW theorem \cite[Thm.C]{QEAs}, there exist Lie subalgebras $\fg^{T,\heartsuit}_{\mathbb{A}^2}\subset \CoHA^{T,\heartsuit}_{\mathbb{A}^2}$ such that the induced morphism
\begin{equation}
\label{SNPBW}
\Sym_{\HO_T}\left(\fg^{T,\heartsuit}_{\mathbb{A}^2}\otimes\HO_{\BC^*_u}\right)\rightarrow \CoHA^{T,\heartsuit}_{\mathbb{A}^2}
\end{equation}
is an isomorphism.  The generating functions of these Lie algebras are given by
\begin{equation}
\label{SNgen}
\chi(\fg^{T,\heartsuit}_{\BA^2})=vq^{f(\heartsuit)}(1-v)^{-1}(1-q^2)^{-n}
\end{equation}
where $f(\mathcal{SN})=0$ and $f(\mathcal{N})=2$; see \cite{BSV17,preproj}.
\begin{proposition}
The algebras $\CoHA^{T,\heartsuit}_{\mathbb{A}^2}$ for $\heartsuit=\mathcal{SN},\mathcal{N}$ are not spherically generated.
\end{proposition}
\begin{proof}
Both results follow by dimension counting.  Firstly, both algebras are concentrated entirely in non-negative cohomological degrees.  Then \eqref{SNPBW} and \eqref{SNgen} imply that the subspace $V\subset \CoHA^{T,\mathcal{SN}}_{\mathbb{A}^2,2}$ in cohomological degree zero is 2 dimensional, while the cohomological degree zero piece of $U\subset \CoHA^{T,\mathcal{SN}}_{\mathbb{A}^2,1}$  is 1-dimensional, so cannot generate $V$.  The argument for $\CoHA^{T,\mathcal{N}}_{\mathbb{A}^2}$ is similar.
\end{proof}
It would be interesting to have some neat description of the images of any one of the injections $\CoHA_{\mathbb{A}^2}^{T,\heartsuit}\hookrightarrow \bigoplus_{d\geq 1}\BQ[t_1,t_2]\otimes \BQ[x_1,\ldots,x_{d}]^{\mathfrak{S}_{d}}$, for $\heartsuit=\emptyset,\mathcal{SN},\mathcal{N}$ to compare with the main result of \cite{Neg22} in K-theory for the case $\heartsuit=\mathcal{SN}$ --- we leave this to future work.  
\smallbreak
\begin{theorem}
\label{HW_cor}
Let $\BC^*$ act with weights $(1,0)$ on the two coordinates of $\BA^2$.  Then there is a $\HO_{\mathbb{C}^*}$-linear  isomorphism 
\[
F\colon\hat{\fg}^{\BC^*}_{\BA^2}\cong\Rees_F[W^+_{1+\infty}]
\]
between the deformed affinized BPS Lie algebra for the pair $(Q^{(3)},\tilde{W})$ and the Rees Lie algebra of $W^+_{1+\infty}$, uniquely determined by setting $F(\alpha_1^{(m)})=z\tilde{D}^{(m)}$.  Thus, there is an isomorphism of algebras 
\begin{equation}
\label{reqiso}
\Coha^{\BC^*}_{\BA^2}\cong \UEA_{\BQ[t]}(\Rees_F[W^+_{1+\infty}]).
\end{equation}
\end{theorem}
\begin{proof}
Let $T=\Hom_{\mathrm{Grp}}(\BZ,(\BC^*)^2)$ act on $\tilde{Q}$ via the weighting $\tilde{\tau}\colon \tilde{Q}_1\rightarrow \BZ^2$ from \eqref{ttaudef}.  By Proposition \ref{pur_form} there is an isomorphism of algebras
\begin{equation}
\label{hwgt}
\CoHA^{\BC^*}_{\BA^2}\cong \CoHA^{T}_{\BA^2}\otimes_{\HO_T}\BQ[t_1,t_2]/(t_2).
\end{equation}
Then by Proposition \ref{RSYZ_thm} there is a morphism of algebras 
\begin{align}
G\colon (\CY_{t_1,t_2,t_3}(\widehat{\mathfrak{gl}(1)})^+)_{t_2=0}\rightarrow \CoHA^{\BC^*}_{\BA^2}
\end{align}
setting $t_1=t$ and sending $e_i$ to $\alpha_{1}^{(i)}$.  This morphism is \textit{surjective} by Theorem \ref{TSG}.  By Proposition \ref{Tsym_save} there is an \textit{injection} $\iota'\colon (\CY_{t_1,t_2,t_3}(\widehat{\mathfrak{gl}(1)})^+)_{t_2=0}\hookrightarrow \UEA_{\BQ[t]}(\Rees_F[W^+_{1+\infty}])$.  Now by \eqref{gen_serg2} we have that
\[
\chi(\hat{\fg}^{\BC^*}_{\BA^2})=q^{-2}v(1-v)^{-1}(1-q^2)^{-2}=\chi(\Rees_F[W^+_{1+\infty}])_{v^2\mapsto v}
\] 
with the doubling of exponents as in Theorem \ref{Heis_lift}.  By Proposition \ref{TPBW} we deduce
\[
\chi(\CoHA^{\BC^*}_{\BA^2})=\chi(\UEA_{\BQ[t]}(\hat{\fg}^{\BC^*}_{\BA^2}))=\chi(\UEA_{\BQ[t]}(\Rees_F[W^+_{1+\infty}]))_{v^2\mapsto v}
\]
and both $G$ and $\iota'$ are isomorphisms.  The isomorphism \eqref{reqiso} is provided by $\iota'\circ G^{-1}$.  Composing $\iota'\circ G^{-1}$ with the PBW isomorphism (Proposition \ref{TPBW} again) yields the isomorphism
\[
\UEA_{\BQ[t]}(\hat{\fg}^{\BC^*}_{\BA^2}) \xrightarrow{\cong}\UEA_{\BQ[t]}(\Rees_F[W^+_{1+\infty}])
\]
restricting to the claimed isomorphism $F$.
\end{proof}

\begin{proposition}
\label{save_inj}
The morphism $\iota$ from Proposition \ref{RSYZ_thm} is injective, so that $\CY_{t_1,t_2,t_3}(\widehat{\mathfrak{gl}(1)})^+$ may be identified with the spherical subalgebra $\CS\!\CoHA^T_{\BA^2}\subset \CoHA^T_{\BA^2}$ under $\iota$, i.e. the subalgebra generated by $\CoHA^T_{\BA^2,1}$.
\end{proposition}
\begin{proof}
By construction, $\iota$ induces a surjection $\CY_{t_1,t_2,t_3}(\widehat{\mathfrak{gl}(1)})^+\rightarrow \CS\!\CoHA^T_{\BA^2}$.  The morphism $\iota$ is injective, since it is a morphism of free $\HO_T$-modules (by Proposition \ref{pur_form} and Lemma \ref{fsave}) and it is injective after setting $t_2=0$ by Proposition \ref{Tsym_save} and Theorem \ref{HW_cor}.
\end{proof}

Finally we can completely describe the fully equivariant CoHA:
\begin{theorem}
\label{Full_Yang}
There is an isomorphism of algebras $\iota\colon \CY_{t_1,t_2,t_3}(\widehat{\mathfrak{gl}(1)})^+ \rightarrow \CoHA^T_{\BA^2}$  sending $e_i$ to $\alpha_1^{(i)}$.
\end{theorem}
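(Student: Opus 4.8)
The plan is to obtain this theorem as an immediate corollary of two results already established: the affine Yangian embedding of Rap\v{c}\'{a}k--Soibelman--Yau--Zhao (Theorem \ref{RSYZ_thm}) and the spherical generation of $\CoHA^T_{\BA^2}$ (Theorem \ref{TSG}). There is essentially nothing to compute; the content has been pushed into those two inputs.

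First I would recall from Theorem \ref{RSYZ_thm} that there is an \emph{injective} algebra homomorphism
\[
\iota\colon \CY_{t_1,t_2,t_3}(\widehat{\mathfrak{gl}(1)})^+\longrightarrow \CoHA^T_{\BA^2},\qquad e_i\longmapsto \alpha_1^{(i)},
\]
whose image is exactly the spherical subalgebra $\CS\!\CoHA^T_{\BA^2}\subset \CoHA^T_{\BA^2}$ generated by $\CoHA^T_{\BA^2,1}$. Then I would invoke Theorem \ref{TSG}, which says precisely that $\CoHA^T_{\BA^2}$ is spherically generated, so $\CS\!\CoHA^T_{\BA^2}=\CoHA^T_{\BA^2}$. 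Hence $\iota$ is simultaneously injective and surjective, and therefore an isomorphism of algebras; it is automatically $\HO_T$-linear, being a morphism over $\BQ[t_1,t_2,t_3]/(t_1+t_2+t_3)\cong\HO_T$.

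It then remains to arrange the normalisation so that $e_i\mapsto\tilde{\alpha}_1^{(i)}$ and not merely $e_i\mapsto\alpha_1^{(i)}$. Here I would observe that $\tilde{W}$ vanishes identically in dimension vector $1$, so $\CoHA^T_{\BA^2,1}$ carries no potential and, by Proposition \ref{pur_form}, is free of rank one over $\HO_{\BC^*_u}\otimes\HO_T$ on its one-dimensional subspace in cohomological degree $-2$. Both $\alpha_1^{(0)}$ and $\tilde{\alpha}_1^{(0)}$ lie in this distinguished line --- for $\tilde{\alpha}_1^{(0)}$ this is its defining characterisation --- so they agree up to a nonzero scalar; since $\alpha_1^{(i)}=u^i\bullet\alpha_1^{(0)}$ and $\tilde{\alpha}_1^{(i)}=u^i\bullet\tilde{\alpha}_1^{(0)}$, rescaling $\iota$ (equivalently, renormalising the generators so that $e_0\mapsto\tilde{\alpha}_1^{(0)}$) gives the claimed isomorphism. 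I do not anticipate any real obstacle in this argument: the substantive work lies upstream, in the spherical generation Theorem \ref{TSG}, which itself rests on the Lehn-type commutator identity of Proposition \ref{LehnWork}, the identification $\hat{\fg}_{\BA^2}\cong\Gr^F_{\bullet}W^+_{1+\infty}$ of Theorem \ref{fder}, and the purity and base-change statements of Proposition \ref{pur_form}.
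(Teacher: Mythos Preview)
Your proposal is correct and matches the paper's own proof, which is the one-line observation that the result is an immediate corollary of Theorems~\ref{RSYZ_thm} and~\ref{TSG}. Your additional paragraph reconciling the notations $\alpha_1^{(i)}$ and $\tilde{\alpha}_1^{(i)}$ is a harmless elaboration (both are characterised up to scalar by lying in cohomological degree $-2$ inside $\CoHA^T_{\BA^2,1}$, cf.\ the footnote in the proof of Theorem~\ref{TSG}), not a different argument.
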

\begin{proof}
This is an immediate corollary of Proposition \ref{save_inj} and Theorem \ref{TSG}.
\end{proof}
%We can now give a more complete statement regarding the relation between BPS Lie algebras and $W^+_{1+\infty}$.

\bibliographystyle{alpha}
\bibliography{Literatur}

%\vfill

%\textsc{\small B. Davison: School of Mathematics, University of Edinburgh, James Clerk Maxwell Building, Peter Guthrie Tait Road, King's Buildings, Edinburgh EH9 3FD, United Kingdom}\\
%\textit{\small E-mail address:} \texttt{\small ben.davison@ed.ac.uk}\\

\end{document}